\documentclass[reqno,11pt]{amsart}
\voffset=-1.75cm \hoffset=-1.75cm \textheight=24.0cm \textwidth=16.2cm
\usepackage{amsmath, amsfonts,amsthm,amssymb,amsbsy,upref,color,graphicx,amscd,hyperref,enumerate}
\usepackage[active]{srcltx}
\usepackage[latin1]{inputenc}
\usepackage{bbm}
\usepackage{algorithm}
\usepackage{algorithmicx}
\usepackage{algpseudocode}
\usepackage{graphicx}
\usepackage{subfig}

\usepackage[toc,title,page]{appendix}

\usepackage{pgf}
\usepackage{xfrac}

\def\ds{\displaystyle}
\def\eps{{\varepsilon}}
\def\N{\mathbb{N}}

\def\R{\mathbb{R}}

\def\HH{\mathcal{H}}

\newcommand{\be}{\begin{equation}}
\newcommand{\ee}{\end{equation}}

\newcommand{\de}{\partial}
\newcommand{\dist}{{\rm {dist}}}

\newcommand{\B}{\mathcal{B}}

\newcommand{\CC}{\mathcal{C}}
\newcommand{\graph}{{\rm graph}}

\newcommand\res{\mathop{\hbox{\vrule height 7pt width .3pt depth 0pt
\vrule height .3pt width 5pt depth 0pt}}\nolimits}

\theoremstyle{plain}
\newtheorem{theo}{Theorem}

\numberwithin{equation}{section}
\theoremstyle{plain}
\newtheorem{teo}{Theorem}[section]
\newtheorem{lemma}[teo]{Lemma}
\newtheorem{cor}[teo]{Corollary}
\newtheorem{prop}[teo]{Proposition}
\newtheorem{deff}[teo]{Definition}
\theoremstyle{remark}
\newtheorem{oss}[teo]{Remark}
\newcommand{\ind}{\mathbbm{1}}
\usepackage{xcolor}



\title[Uniqueness of blow-up at isolated singularity]{Uniqueness of the blow-up at isolated singularities for the Alt-Caffarelli functional}

\author{Max Engelstein, Luca Spolaor, Bozhidar Velichkov}

\thanks{The first author was supported by an NSF MSPRF DMS-1703306. Much of this work was done while the first and second author were visiting the Universit\'e Grenoble Alpes and while the first author was visiting Princeton University; they thank the universities for their hospitality. The third author has been partially supported by the LabEx PERSYVAL-Lab (ANR-11-LABX-0025-01) project GeoSpec and the project ANR CoMeDiC. The authors thank David Jerison for helpful comments in the preparation of this manuscript. The authors also thank Aapo Kauranen and Marti Prats for their many suggestions and especially for noticing a mistake in the original exposition of Section 4.2. Finally, the authors thank several anonymous referees for their close reading and helpful suggestions.}

\subjclass[2010]{Primary 35R35.}
\keywords{epiperimetric inequality, monotonicity formula, Bernoulli problem, free boundary, singular points}

\address {Max Engelstein: \newline \indent
	University of Minnesota 
	\newline \indent
	206 Church St. SE \newline \indent
Minneapolis, MN 55455, USA
	}
\email{maxe@mit.edu}

\address {Luca Spolaor: \newline \indent
	UC San Diego
	\newline \indent
9500 Gilman Drive\newline \indent La Jolla, CA 92093, USA}

\email{lspolaor@ucsd.edu}

\address {Bozhidar Velichkov: \newline \indent
Dipartimento di Matematica e Applicazioni ``Renato Caccioppoli" \newline \indent
Universit\`a degli Studi di Napoli Federico \newline \indent
Via Cintia, Monte S. Angelo I-80126 Napoli, Italy}
\email{bozhidar.velichkov@unina.it}

\makeindex

\begin{document}


\begin{abstract}
In this paper we prove uniqueness of blow-ups and $C^{1,\log}$-regularity for the free-boundary of minimizers of the Alt-Caffarelli functional at points where one blow-up has an isolated singularity. We do this by establishing a (log-)epiperimetric inequality for the Weiss energy for traces close to that of a cone with isolated singularity, whose free-boundary is graphical and smooth over that of the cone in  the sphere. With additional assumptions on the cone, we can prove a classical epiperimetric inequality which can be applied to deduce a $C^{1,\alpha}$ regularity result. We also show that these additional assumptions are satisfied by the De Silva-Jerison-type cones, which are the only known examples of minimizing cones with isolated singularity. Our approach draws a connection between epiperimetric inequalities and the \L ojasiewicz inequality, and, to our knowledge, provides the first regularity result at singular points in the one-phase Bernoulli problem. 
\end{abstract}

\maketitle



\section{Introduction}

In this paper we prove a \emph{uniqueness of blow-up (with logarithmic decay) and regularity result} for the free-boundary of minimizers of the Alt-Caffarelli functional at points where one blow-up has an isolated singularity. In the special case where one of the blow-ups is \emph{integrable through rotation} (which includes the only known cones with an isolated singularity), we improve the rate of convergence to a H\"older one. We do this by establishing a log-epiperimetric inequality for the Weiss' boundary adjusted energy \eqref{eq:weissmonotonicity} around isolated singularities. 
We remark that this is the  {\emph{first regularity result at singular points of minimizers to the (one-phase) Alt-Caffarelli functional}} that we are aware of. Before stating the theorem, we need to introduce some notation. We shall denote by $\mathcal{E}$ the Alt-Caffarelli functional:
\begin{equation}\label{eq:altcaffunctional}
\mathcal{E}(u):=\int_{D}|\nabla u|^2\ dx+\big|\{u>0\}\cap D\big|\,,\qquad u\in H^1(D; \R^+),
\end{equation} 
(here and throughout $D \subset \mathbb R^d$ is a connected open set with Lipschitz regular boundary).

The functional in \eqref{eq:altcaffunctional} was first studied systematically by Alt and Caffarelli \cite{AlCa}, who in particular showed that minimizers exist, and satisfy the following, overdetermined, boundary value problem in a weak sense: 
\begin{equation}\label{e:stationa}
u \geq 0, \qquad \Delta u = 0\ \text{ in }\ \{ u > 0 \} \cap D, \qquad |\nabla u| = 1\ \text{ on }\ \partial \{u > 0 \} \cap D . 
\end{equation}

In \cite{Weiss1}, Weiss introduced the quantity, $W(u,x_0, r)$, (or $W(u,r)$, when $x_0=0$, and $W(u)$, when $x_0=0$ and $r=1$) which monotonically increases with $r$ for every minimizer $u$,
\begin{equation}\label{eq:weissmonotonicity}
W(u,x_0,r):=\frac{1}{r^{d}}\int_{B_r(x_0)} |\nabla u|^2\ dx -\frac{1}{r^{d+1}}\int_{\de B_r(x_0)}u^2\ d\HH^{d-1}+\frac{1}{r^{d}}\big|\{u>0\}\cap B_{r}(x_0)\big|\,,
\end{equation}
where $x_0\in \de \{u>0\}\cap D$ and $0<r<\dist(x_0, \de D)$. A consequence of this monotonicity is that every blow-up (see Section \ref{ss:notation}) is a 1-homogenous globally defined minimizer to \eqref{eq:altcaffunctional}.

 Non-flat 1-homogenous minimizers (and thus singular points) do not exist when $d \leq 4$ (for $d= 2$ this is due to \cite{AlCa}, for $d=3$ to \cite{CaJeKe}, and for $d =4$ to \cite{JeSa}). In contrast, De Silva and Jerison \cite{DeJe} constructed a class of cones with isolated singularities at the origin which are minimizers in dimensions $d \geq 7$. We refer to this class of cones as $\B$; specifically, 
$$
\B:=\big\{b_{\nu,\theta_0}\colon \R^d \to \R^+  \,\mbox{ $1$-homogeneous minimizers of $\mathcal{E}$ with $\{b_{\nu,\theta_0}>0\}=\CC_{\nu,\theta_0}$}\big\},
$$
where for given $\theta_0\in (0,\sfrac\pi2)$ and $\nu\in \mathbb S^{d-1}$ we define the cone
$$
\CC_{\nu,\theta_0}:=\left\{x \in \R^d\setminus\{0\}\,:\, \left| \frac x{|x|}\cdot \nu \right|<\sin(\theta_0)  \right\}\,.
$$
Notice that $W$ restricted to the class $\B$ depends only on $\theta_0$, and we set $W(b_{\nu,\theta_0}):=\Theta_{\theta_0}$.


Left open in the works of \cite{Weiss1, CaJeKe, DeJe, JeSa} is the question of whether or not blowups at singular points are unique. {\it A priori}, it is possible that the free boundary around a singular point asymptotically approaches one singular cone at a certain set of small scales, but approaches a different singular cone at another set of scales. The problem of ``uniqueness for blowups" is a central one in geometric analysis and free boundary problems (see, e.g. \cite{Simon0,GaPe}). Our main theorem is that if one blowup at $x_0 \in \partial \{u > 0\}$ is a cone $b$ with isolated singularity, then every blowup at $x_0$ is equal to $b$.

\begin{theo}[Regularity for isolated singularities]\label{t:main}
	Let $u\in H^1(D)$ be a minimizer of the Alt-Caffarelli functional $\mathcal{E}$ on a domain $D\subset \R^d$ and let $x_0\in \de\{u>0\}\cap D$ be a singular point of the free-boundary such that a blow up for $u$ at $x_0$, call it $b$, has an \emph{isolated singularity}. Then $b$ is the unique blow up and, furthermore, there exists $r_0>0$ such that $\de\{u>0\}\cap B_{r_0}(x_0)$ is a $C^{1,\log}$ graph over $\de \{b>0\}\cap B_{r_0}(x_0)$.
\end{theo}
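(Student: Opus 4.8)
The engine of the proof is a \emph{(log-)epiperimetric inequality} for the Weiss energy \eqref{eq:weissmonotonicity} centered at the cone $b$; granting this, the remainder is the machinery that turns such an inequality, together with the monotonicity of \eqref{eq:weissmonotonicity}, into a convergence rate for the rescalings. Assume $x_0=0$, write $u_r(x):=u(rx)/r$, $\Theta:=W(b)=\lim_{r\to0^+}W(u,0,r)$ (the limit exists by monotonicity and the existence of the blow-up), and $e(r):=W(u,0,r)-\Theta$, a nondecreasing function with $e(0^+)=0$. The plan has four steps: (1) prove the epiperimetric inequality; (2) force the rescalings $u_r$ into the admissible class of traces at all small scales; (3) extract a decay rate for $e(r)$ and deduce uniqueness of the blow-up; (4) convert the decay into $C^{1,\log}$ regularity of the free boundary.

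\emph{Step 1 (the main obstacle).} Since $b$ has an isolated singularity, $\Gamma_b:=\partial\{b>0\}\cap\partial B_1$ is a smooth compact $(d-2)$-submanifold of $\mathbb S^{d-1}$. I would parametrize the traces $c$ on $\partial B_1$ whose positivity set is a small graph over $\Gamma_b$ by a small function $\phi$ on $\Gamma_b$, and set $\mathcal F(\phi):=W(z_c)-\Theta$, with $z_c(\rho\theta):=\rho\,c(\theta)$ the $1$-homogeneous extension. Straightening $\Gamma_b$ by a diffeomorphism turns $\mathcal F$ into a real-analytic functional of $\phi$ on a fixed domain with $\mathcal F(0)=0$ and $D\mathcal F(0)=0$ (as $b$ is a homogeneous minimizer, i.e. a critical cone). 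The inequality to prove is: for $\|\phi\|$ small there is $v\in H^1(B_1)$ with $\mathrm{tr}_{\partial B_1}v=c$ and $W(v)-\Theta\le\mathcal F(\phi)-\varepsilon\,\mathcal F(\phi)^{1+\gamma}$ for some $\varepsilon>0$ and some $\gamma\in(0,1)$; under an integrability-through-rotation hypothesis on $b$ (which one verifies for the De Silva--Jerison cones $\B$) one instead proves the polynomial version $W(v)-\Theta\le(1-\varepsilon)\mathcal F(\phi)$. I would construct $v$ by a short-time gradient-flow interpolation of the trace (from $c$ at $\partial B_1$ toward lower values of $\mathcal F$) extended $1$-homogeneously, tuned so that the energy gain of $v$ over $z_c$ is $\gtrsim\|D\mathcal F(\phi)\|^2$; the inequality then follows from a \L ojasiewicz--Simon inequality $\|D\mathcal F(\phi)\|\ge c\,|\mathcal F(\phi)|^{1-\theta}$ with $\theta\in(0,\tfrac12]$ (giving $\gamma=1-2\theta$, and the polynomial case exactly when $\theta=\tfrac12$, which is what integrability through rotation buys). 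Establishing the real-analyticity of the one-phase functional near $b$ and carrying out the Fredholm/spectral analysis of the Jacobi operator of $\Gamma_b$ needed for this \L ojasiewicz inequality is the genuinely hard part of the whole argument.

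\emph{Steps 2--3 (propagation, decay, uniqueness).} Along the blow-up sequence $r_k\to0$ with $u_{r_k}\to b$, Alt--Caffarelli regularity away from the isolated singularity \cite{AlCa} upgrades this to $C^{2,\alpha}$-convergence of free boundaries on $\partial B_1$, so $u_{r_k}$ is admissible for $k$ large. On any range of scales where admissibility holds, minimality of $u_r$ (hence $e(r)\le W(v_r)-\Theta$ for the competitor $v_r$ built from the trace $c_r:=u_r|_{\partial B_1}$), the epiperimetric inequality, and the standard energy-gap estimate $W(z_{c_r})-W(u,0,r)\le C\,r\,\tfrac{d}{dr}W(u,0,r)$ combine to give the differential inequality $e'(r)\ge \tfrac cr\,e(r)^{1+\gamma}$, hence $e(r)\lesssim(\log\tfrac1r)^{-1/\gamma}$; feeding this into $\tfrac{d}{dr}W(u,0,r)=\tfrac2r\|\partial_\rho u_r-c_r\|_{L^2(\partial B_1)}^2$ (from \cite{Weiss1}) and summing over dyadic scales with a logarithmic weight shows $\|u_s-u_\rho\|_{L^2(\partial B_1)}$ is controlled by a super-linear power of $e(\rho)$ (times a log). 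A continuity argument then rules out a ``last bad scale'' $r^*>0$: at such a point the previous bound makes $\|u_{r^*}-b\|_{L^2(\partial B_1)}$ small while $e(r^*)$ is small, so compactness plus improvement of flatness force $u_{r^*}$ to be admissible, a contradiction; hence $u_r$ is admissible, and $e(r)\lesssim(\log\tfrac1r)^{-1/\gamma}$, for all $r\in(0,r_1)$. The same dyadic bound then shows $r\mapsto u_r$ is Cauchy in $L^2(\partial B_1)$ as $r\to0^+$, so the blow-up is unique; being a limit point of $u_{r_k}\to b$, it equals $b$.

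\emph{Step 4 (regularity).} The decay of $e(r)$ bounds the flatness of $\partial\{u_r>0\}$ near $\partial B_1$ by $\lesssim e(r)^{1/2}\lesssim(\log\tfrac1r)^{-1/(2\gamma)}$, so by improvement of flatness $\partial\{u_r>0\}$ is, away from the origin and uniformly in $r$, a $C^{1,\alpha}$ (indeed $C^{2,\alpha}$) graph over $\Gamma_b$ with that graph norm; reassembling these estimates over $r\in(0,r_0)$ exhibits $\partial\{u>0\}\cap B_{r_0}(x_0)$ as a $C^{1,\log}$ graph over the cone $\partial\{b>0\}$ (and a $C^{1,\alpha}$ graph in the integrable case, from the polynomial decay $e(r)\lesssim r^{2\alpha}$). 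To reiterate, the serious difficulty lies entirely in Step 1: setting up the real-analytic/\L ojasiewicz--Simon framework for the one-phase functional in the presence of a free boundary, and constructing a competitor whose energy gain over $z_c$ is genuinely of order $\|D\mathcal F(\phi)\|^2$.
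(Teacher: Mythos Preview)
Your high-level strategy coincides with the paper's: establish a (log-)epiperimetric inequality around $b$ via a \L ojasiewicz-type mechanism, feed it into Weiss monotonicity to get $e(r)\lesssim(\log\tfrac1r)^{-1/\gamma}$, run a continuity argument to propagate admissibility to all small scales, and read off uniqueness and $C^{1,\log}$ regularity. Steps~2--4 of your outline match Section~\ref{ss:conclusion} essentially verbatim, including the dyadic summation, the ``no last bad scale'' argument (the paper packages this via Lemmas~\ref{l:ann_par} and~\ref{l:comp_scales}), and the conversion to graph regularity.

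Where your sketch and the paper genuinely diverge is in the architecture of Step~1. You parametrize traces $c$ by the graph $\phi$ of $\partial\{c>0\}$ over $\Gamma_b$ and propose to run a gradient flow on the resulting functional $\mathcal F(\phi)=W(z_c)-\Theta$. But a trace $c$ carries more data than $\phi$: it also carries its values on the support, and the paper's point (made explicitly in the introduction) is that for Alt--Caffarelli there is \emph{no single flow} coupling the external variation ($\Delta u=0$ inside) and the internal variation ($|\nabla u|=1$ on the free boundary), so a direct Simon-style gradient flow of ``the trace'' is not available. The paper's workaround is to expand $c$ in Dirichlet eigenfunctions of $\{c>0\}$ and split the construction in two: the higher modes are dispatched by a harmonic extension plus cut-off (Proposition~\ref{prop:higher_modes}), which gains energy purely because their homogeneity exceeds $1$; only the projection onto the first eigenfunction---now a function of the domain $\Omega_\phi$ and a single scalar $\kappa$---is handled by the flow. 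After this reduction the functional $\mathcal F(\zeta)=\kappa_0^2(\lambda(\zeta)-(d-1))+m(\zeta)-m(0)$ lives on $C^{2,\alpha}(\partial\Omega_b)$, the Lyapunov--Schmidt reduction (Appendix~\ref{s:LS}) collapses the problem to a finite-dimensional analytic $G$ on $\ker\delta^2\mathcal F(0)\oplus\mathbb R$, and the classical \L ojasiewicz inequality applies there. The competitor is then built slice by slice (Lemma~\ref{l:slicing}), with the slice at radius $r$ equal to $\kappa_r$ times the first eigenfunction on the domain produced by the flow at time $\sim(1-r)$. So your ``energy gain $\gtrsim\|D\mathcal F\|^2$ plus infinite-dimensional \L ojasiewicz--Simon'' is replaced by an explicit eigenfunction decomposition followed by a finite-dimensional \L ojasiewicz step; this is the paper's main new idea, and it is what your sketch would have to supply to become a proof.
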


If we have additional information on the blowup, $b$ (namely that it is integrable through rotations, see Definition \ref{d:integrability}) we can improve the rate of convergence to the minimal cone.

\begin{theo}[Regularity for isolated and integrable singularities]\label{t:main2}
	Let $u\in H^1(D)$ be a minimizer of the Alt-Caffarelli functional $\mathcal{E}$ on a domain $D\subset \R^d$ and let $x_0\in \de\{u>0\}\cap  D$ be a singular point of the free-boundary such that a blow up for $u$ at $x_0$, call it $b$, has an \emph{isolated singularity} and is \emph{integrable through rotation} (see Definition \ref{d:integrability}).
	Then $b$ is the unique blow up and there exists $r_0>0$ such that $\de\{u>0\}\cap B_{r_0}(x_0)$ is a $C^{1,\alpha}$ graph over $\de \{b>0\}\cap B_{r_0}(x_0)$.
\end{theo}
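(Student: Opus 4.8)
The plan is to derive both theorems from a quantitative decay estimate for the Weiss energy $W(u, x_0, r)$, obtained via an epiperimetric inequality, combined with the classical Weiss monotonicity. I would proceed as follows. First, normalize so that $x_0 = 0$ and, after a rotation, identify the reference cone as $b = b_{\nu,\theta_0} \in \B$; set $\Theta = W(b)$. By the hypothesis that one blow-up at $0$ equals $b$, there is a sequence $r_k \to 0$ along which the rescalings $u_{r_k}(x) := u(r_k x)/r_k$ converge (strongly in $H^1_{loc}$, locally uniformly, and in Hausdorff distance of the free boundaries) to $b$. The first substantive step is to show that once $u_{r}$ is sufficiently close to $b$ in $H^1(B_1)$, its free boundary $\partial\{u_r > 0\} \cap \partial B_1$ is a smooth graph over $\partial\{b>0\} \cap \partial B_1$ in the sphere; this uses the improvement-of-flatness/elliptic regularity theory for the one-phase problem away from the (isolated) vertex singularity of the cone, together with the fact that $b$ has an isolated singularity so that its spherical nodal set is a smooth hypersurface. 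This puts the trace of $u_r$ on $\partial B_1$ into the class to which the (log-)epiperimetric inequality applies.

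Second, I would invoke the epiperimetric inequality established earlier in the paper: for traces $z$ on $\partial B_1$ that are graphical and $C^{1,\alpha}$-close to the trace of a cone $b \in \B$, one has a competitor $v$ (the $1$-homogeneous extension being modified in the interior) with
\[
W(v) - \Theta \le (1 - \kappa)\bigl( W(z) - \Theta \bigr)
\]
for some $\kappa \in (0,1)$, when $b$ is integrable through rotation; in the general case one only gets the log-epiperimetric inequality $W(v) - \Theta \le (W(z)-\Theta)\bigl(1 - \kappa |W(z) - \Theta|^{\gamma}\bigr)$ for some $\gamma > 0$. Coupling this with the minimality of $u$ (so $W(u_r, \cdot)$ is below the energy of any competitor built from its own trace) and the monotonicity formula \eqref{eq:weissmonotonicity} — specifically the identity expressing $\frac{d}{dr} W(u, r)$ as an $L^2$-norm of the radial deviation from homogeneity on $\partial B_r$ — yields a differential inequality for $e(r) := W(u, r) - \Theta \ge 0$. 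In the integrable case this reads $e(r) \le -C r\, e'(r)$, whence $e(r) \lesssim r^{\beta}$ for some $\beta > 0$; in the general case it reads $e(r) \le -C r\, e'(r) |e(r)|^{-\gamma}$ up to constants, giving the logarithmic rate $e(r) \lesssim (\log(1/r))^{-b}$ for some $b > 0$.

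Third, I would upgrade this energy decay to uniqueness of the blow-up and to the stated graph regularity. The standard mechanism: writing $\|u_r - b\|_{L^2(\partial B_1)}$ (after optimizing over the rotation parameter $\nu$, which is where integrability through rotation is used to control the finite-dimensional family of near-minimizers) in terms of $\int_{B_1} |\partial_r(u/|x|)|^2$, one bounds $\|u_{2r} - u_r\|_{L^2(\partial B_1)}$ by $\sqrt{e(r)}$ up to a constant; summing a dyadic (or telescoping) series — which converges because $\sqrt{e(r)}$ is summable in the $C^{1,\alpha}$ case and summable-with-a-logarithmic-loss in the general case — gives that $u_r \to b_{\nu_\infty, \theta_0}$ along \emph{all} $r \to 0$, i.e.\ the blow-up is unique, with rate $\|u_r - b\|_{L^2} \lesssim r^{\alpha}$ (resp.\ $(\log 1/r)^{-c}$). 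Finally, transferring this decay rate from $L^2$ on spheres to a pointwise $C^{1,\alpha}$ (resp.\ $C^{1,\log}$) control of the free boundary is done by the usual interpolation with the a priori $C^{1,\alpha}$-regularity of $\partial\{u>0\}$ away from the vertex (valid at all small scales once $u_r$ is close to the smooth-away-from-$0$ cone $b$), a Campanato-type iteration identifying the free boundary near $0$ as a graph over $\partial\{b>0\}$ with the claimed modulus of continuity for its normal.

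The main obstacle, I expect, is the second step — proving the epiperimetric inequality itself — and in particular establishing it in the \emph{log} form without any integrability hypothesis. The difficulty is that the set of $1$-homogeneous minimizers near $b$ need not be a smooth manifold, so one cannot simply linearize and use a spectral gap of the second variation (the Jacobi-type operator on the cone link) as in the classical Weiss/White scheme; instead one must run a \L ojasiewicz-type argument, showing that the Weiss energy, viewed as a functional on spherical traces near that of $b$, satisfies a \L ojasiewicz inequality, and then converting that into the construction of an explicit energy-decreasing competitor. Handling the constraint that competitors keep a graphical, smooth free boundary — rather than living in an unconstrained function space — is the delicate technical point, and is presumably the source of the "mistake in the original exposition of Section 4.2" acknowledged above; the subsidiary obstacle is the careful quantitative improvement-of-flatness needed in step one to guarantee that the trace of $u_r$ is admissible for the inequality for all small $r$, not merely along the blow-up sequence.
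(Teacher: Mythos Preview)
Your proposal is correct and follows essentially the same route as the paper: establish graphicality of the spherical free boundary via compactness and Alt--Caffarelli $\eps$-regularity (the paper's Lemma~\ref{l:ann_par}), apply the (log-)epiperimetric inequality of Theorem~\ref{t:epi_smooth}, couple it with the Weiss monotonicity identity \eqref{e:weiss} to obtain a differential inequality for $e(r)=W(u,r)-W(b)$, and then run a dyadic summation (cf.\ \eqref{e:fanculo}) to get uniqueness and the rate, followed by a Campanato-type transfer to the free boundary.

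Two small corrections. First, integrability through rotations enters \emph{only} inside the proof of the epiperimetric inequality itself: it is precisely the hypothesis that lets one take $\gamma=0$ in \eqref{e:epii}, via Lemma~\ref{l:kill_linear}, which rotates the cone so that the kernel component of the trace vanishes. You place it instead at the uniqueness stage (``optimizing over the rotation parameter $\nu$''), but in the paper's scheme no such optimization is performed --- once the epiperimetric inequality holds with $\gamma=0$, the summation argument is identical to the non-integrable case and yields the H\"older rate directly. Second, the cone $b$ in Theorem~\ref{t:main2} is an arbitrary one-homogeneous minimizer with isolated singularity that is integrable through rotations; membership in $\B$ is only relevant for Corollary~\ref{c:DeSiJe}.
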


We will prove that all the cones in the class $\mathcal B$ satisfy the above integrability condition, thus leading to the following corollary.

\begin{cor}[Regularity for isolated singularities of De Silva-Jerison type]\label{c:DeSiJe}
	Let $u\in H^1(D)$ be a minimizer of the Alt-Caffarelli functional $\mathcal{E}$ on a domain $D\subset \R^d$ and let $x_0\in \de\{u>0\}\cap D$ be a singular point of the free-boundary such that for some $\nu \in \mathbb S^{d-1}$ and $\theta_0\in (0,\sfrac\pi2)$ the function $b_{\nu,\theta_0}$ is a blow-up for $u$ at $x_0$.
	Then $b_{\nu,\theta_0}$ is the unique blow-up and furthermore there exists $r_0>0$ such that $\de\{u>0\}\cap B_{r_0}(x_0)$ is a $C^{1,\alpha}$ graph over $\de\CC_{\nu,\theta_0}\cap B_{r_0}(x_0)$. 
\end{cor}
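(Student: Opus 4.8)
The plan is to reduce Corollary \ref{c:DeSiJe} to Theorem \ref{t:main2}: it suffices to verify that each cone $b_{\nu,\theta_0}\in\B$ (i) has an isolated singularity and (ii) is integrable through rotation in the sense of Definition \ref{d:integrability}, and then to invoke Theorem \ref{t:main2} with $b=b_{\nu,\theta_0}$. Property (i) comes from the De Silva--Jerison construction \cite{DeJe}: the cone $\CC_{\nu,\theta_0}$ is smooth away from $0$, and since $b_{\nu,\theta_0}$ is a minimizer whose positivity set is invariant under the full group of rotations fixing the axis $\nu$ (and under $x\mapsto-x$), the Alt--Caffarelli regularity theory \cite{AlCa} leaves the origin as the only possible singular point; hence $\de\{b_{\nu,\theta_0}>0\}\setminus\{0\}$ is a smooth (indeed real-analytic) hypersurface, i.e. the singularity at $0$ is isolated.

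The substance of the argument is (ii), a non-degeneracy statement for the second variation of the Weiss energy $W$ at $b_{\nu,\theta_0}$. First I would exploit the $O(d-1)$-symmetry of $\CC_{\nu,\theta_0}$: in coordinates $(t,\zeta)$ on $\mathbb{S}^{d-1}$ with $t=\omega\cdot\nu$ and $\zeta\in\mathbb{S}^{d-2}$, the cone is $b_{\nu,\theta_0}(x)=|x|\,g(\omega\cdot\nu)$ for a profile $g$ on $(-\sin\theta_0,\sin\theta_0)$ vanishing at the endpoints and satisfying the gradient-one normalization on $\de\CC_{\nu,\theta_0}$. Linearizing the one-phase free-boundary problem around $b_{\nu,\theta_0}$ (equivalently, writing out the second variation of $W$) and expanding a normal perturbation of the free boundary in spherical harmonics on the cross-section $\mathbb{S}^{d-2}$, the associated quadratic form block-diagonalizes into a family of one-dimensional Sturm--Liouville problems indexed by the harmonic degree $k\ge 0$. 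The goal is to show that the kernel of the second variation is exactly the span of the infinitesimal rotations of the axis $\nu$, a $(d-1)$-dimensional space of $1$-homogeneous fields lying in the degree-$k=1$ sector and tangent to the orbit $\{b_{\nu',\theta_0}:\nu'\in\mathbb{S}^{d-1}\}$. Concretely one checks: the $k=1$ block has kernel precisely this rotational one, with no other admissible $1$-homogeneous solution; the $k=0$ block is strictly positive, reflecting that the opening angle $\theta_0$ is rigidly determined by minimality and therefore does not give a deformation direction within the class of minimizing cones; and the $k\ge 2$ blocks are strictly positive, so that no extraneous Jacobi fields appear. Together these say that the kernel of the second variation of $W$ at $b_{\nu,\theta_0}$ equals the tangent space to the manifold of minimizing cones through $b_{\nu,\theta_0}$, i.e. $b_{\nu,\theta_0}$ is integrable through rotation.

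Granting (i) and (ii), Theorem \ref{t:main2} applies directly with $b=b_{\nu,\theta_0}$: since $b_{\nu,\theta_0}$ is a blow-up of $u$ at $x_0$ with an isolated singularity that is integrable through rotation, it is the unique blow-up and there exists $r_0>0$ such that $\de\{u>0\}\cap B_{r_0}(x_0)$ is a $C^{1,\alpha}$ graph over $\de\CC_{\nu,\theta_0}\cap B_{r_0}(x_0)$, which is precisely the assertion of Corollary \ref{c:DeSiJe}. The main obstacle is the spectral step inside (ii): ruling out zero modes in the $k=0$ and $k\ge 2$ sectors and pinning the $k=1$ kernel to be exactly the rotations requires quantitative control of the profile $g$ — for instance via the sub-/supersolution bounds or the ODE characterization used by De Silva--Jerison — together with a careful Sturm oscillation/comparison analysis. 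Everything else is bookkeeping and an appeal to Theorem \ref{t:main2}.
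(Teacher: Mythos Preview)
Your reduction to Theorem \ref{t:main2} and the spherical-harmonic block-diagonalization are exactly the paper's strategy. However, there is a genuine conceptual gap in your picture of the second variation.

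You implicitly treat $\delta^2\mathcal F(0)$ as nonnegative (``the $k=0$ block is strictly positive'', ``the $k\ge 2$ blocks are strictly positive'', with only the rotational kernel in $k=1$). This is false: the paper shows the De Silva--Jerison cone has \emph{index $d$} on the sphere. In each harmonic sector one must split perturbations of $\partial\Omega_{b_{\nu,\theta_0}}$ into those that are even and odd across the equator, giving two families $\zeta_j^\pm$ as in \eqref{defofzetapm}. In the $k=0$ sector the odd perturbation $\zeta_1^-$ (infinitesimal translation along the axis) has \emph{negative} eigenvalue; in the $k=1$ sector the odd perturbations $\zeta_j^-$, $2\le j\le d$, span the kernel (your rotations), while the even perturbations $\zeta_j^+$, $2\le j\le d$, are again \emph{negative} directions (see \eqref{energyofthebadrotations}). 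Only for $k\ge 2$ are all eigenvalues positive, and this is obtained not by a direct sign computation but by the monotonicity \eqref{thejsincrease} together with the explicit zero at $\zeta_j^-$, $2\le j\le d$, and the explicit positivity of $\zeta_{d+1}^+$ in \eqref{e:energyofthelargeplusses}.

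Because the index is nontrivial, Definition \ref{d:integrability} has a second clause that you do not address: the constant perturbation must be $L^2(\partial\Omega)$-orthogonal to every negative eigenfunction. In the paper this is not automatic; it is equivalent to showing that $\zeta_1^+$ (the constant) is itself a \emph{positive} eigenfunction, i.e.\ $\delta^2\mathcal F(0)[\zeta_1^+,\zeta_1^+]>0$. This is by far the hardest step: it is done by bounding $u_1^+$ from above using the test function $f=(1+c\phi_0)/\sqrt{\mathcal H^{d-2}(\mathbb S^{d-2})}$, estimating the resulting error via the spectral gap $\lambda_j\ge (d-2)(d-1)$ for the relevant modes, and then verifying the final inequality \eqref{e:overestimateerror} \emph{numerically} in $d=7$ and by an asymptotic argument using $\theta_0\sqrt d\in(0.62,0.65)$ for $d\ge 21$ (with the intermediate dimensions checked case by case). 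Your heuristic that ``$\theta_0$ is rigidly determined by minimality'' does not supply this; it is consistent with the first variation vanishing but says nothing about the sign of the second variation in the constant direction, and indeed the companion $k=0$ direction $\zeta_1^-$ is negative. Without this computation the second condition of Definition \ref{d:integrability} is unverified and Theorem \ref{t:main2} cannot be invoked.
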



The main ingredient in the proofs of Theorem \ref{t:main} and \ref{t:main2} is a (log-)epiperimetric inequality for minimizers whose trace on $\partial B_1$ has a free boundary which can be written as a smooth graph over $\de \Omega_b$, where $\Omega_b= \{b>0\}\cap \de B_1$. We will often consider graphs on the sphere of the following form: given a function $\zeta \colon \de\Omega_b\to \R$, we define the set on the sphere 
\begin{equation}\label{e:graph}
\graph_{\de \Omega_b}(\zeta):=\big\{\exp_x(\zeta(x)\,\nu(x)) \in \mathbb S^{d-1}\,:\,  x\in \de \Omega_b\big\},
\end{equation}
where $\nu(x)\in T_x(\de \{b>0\}\cap \mathbb S^{d-1})$ is the unit normal in the sphere to $\de \{b>0\}$ pointing outside $\{b>0\}$, and $\exp :T \mathbb S^{d-1}\to \mathbb S^{d-1}$ is the exponential map
$$
\exp_x(v):=\cos(\|v\|)\,x+\sin(\|v\|)\frac{v}{\|v\|}\ ,\quad\mbox{for every}\quad v\in T_x\mathbb S^{d-1}\,.
$$

\begin{theo}[(Log-)Epiperimetric inequality for traces with smooth free-boundary]\label{t:epi_smooth}
	Let $b\in H^1(B_1)$ be a one-homogeneous minimizer of the Alt-Caffarelli functional $\mathcal{E}$ with an isolated singularity at the origin.  There exist constants $\eps=\eps(d,b)>0, \gamma = \gamma(d,b) \in [0, 1)$ and $\delta_0=\delta_0(d,b)>0$, depending on $b$ and on the dimension $d$, such that the following holds. 
	
	If $c\in H^{1}(\de B_1,\R_+)$ is such that there exists $\zeta\in{C^{2,\alpha}(\de\Omega_b)}$ satisfying
	\begin{equation}\label{e:epi_ass}
	\de\{c>0\}=\graph_{\de \Omega_b}(\zeta)\,,
	\quad \mbox{with}\quad
	\|\zeta\|_{C^{2,\alpha}}\leq C_d \|\zeta\|_{L^2}<\delta\,,
	\quad\mbox{and}\quad \|c - b\|_{L^2(\partial B_1)} < \delta\,,
	\end{equation}
	then there exists a function $h\in H^1(B_1,\R_+)$ such that $h=c$ on $\de B_1$ and 
	\begin{equation}\label{e:epii}
	W(h)-W(b)\leq \Big(1-\eps \big|W(z)-W(b)\big|^\gamma \Big) \big(W(z)-W(b)\big),
	\end{equation}
	where $z$ is the $1$-homogeneous extension of $c$ to $B_1$.
	
	In the case where $b$ is integrable through rotations (see Definition \ref{d:integrability}), we can take $\gamma = 0$ in \eqref{e:epii} above. 
\end{theo}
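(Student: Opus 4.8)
The plan is to construct the competitor $h$ by separately handling the "radial/scaling" part of the energy gap and the "tangential" part coming from the free-boundary graph $\zeta$, and to feed the second part into a \L ojasiewicz-type inequality for an appropriate energy functional on the sphere. First I would pass to the natural decomposition adapted to the problem: write the $1$-homogeneous extension $z$ of $c$ and compare $W(z)-W(b)$ with the Dirichlet-type energy on the sphere of the trace $c$ relative to $b$. Since $\partial\{c>0\}=\graph_{\de\Omega_b}(\zeta)$ is a smooth perturbation of $\de\Omega_b$, I would change variables (via the exponential map and the diffeomorphism of $\mathbb S^{d-1}$ it induces near $\de\Omega_b$) to pull everything back to the fixed domain $\Omega_b$, so that the competitor construction takes place on a fixed (singular) spherical domain and the free boundary enters only through lower-order, $\zeta$-dependent coefficients. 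The smallness and a priori estimate $\|\zeta\|_{C^{2,\alpha}}\le C_d\|\zeta\|_{L^2}<\delta$ in \eqref{e:epi_ass} is exactly what makes these coefficient perturbations controllable, i.e.\ of size $O(\|\zeta\|_{L^2})$ with all the constants depending only on $d$ and $b$.

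Next I would build $h$ in the standard "interpolate toward $b$" fashion used in proofs of epiperimetric inequalities, but adapted to the singular cone: on an annulus $B_1\setminus B_{1-t}$ keep $h$ essentially the $1$-homogeneous extension of $c$ (so the boundary datum is matched), and on the inner ball $B_{1-t}$ replace $z$ by a rescaled copy of the solution to the relevant linearized/obstacle-type problem with boundary datum $c$, chosen so that the free boundary of $h$ on $B_{1-t}$ is the cone over $\graph_{\de\Omega_b}(\zeta)$ but the positive-phase part is energy-minimizing given that constraint. Computing $W(h)$ and expanding in $t$, the leading-order gain is governed by the difference between the Dirichlet energy of $c$ on the sphere and that of $b$, plus the measure term; the key algebraic identity is that $W(z)-W(b)$ equals (up to the right positive constant) a quadratic-plus-higher expression $Q(c-b,\zeta)$ which is coercive in the combined variable. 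This reduces the theorem to showing $W(h)-W(b)\le (1-\eps|W(z)-W(b)|^\gamma)(W(z)-W(b))$, i.e.\ that the energy one can shave off by optimizing the interior is at least $\eps |W(z)-W(b)|^{1+\gamma}$.

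The heart of the argument, and the step I expect to be the main obstacle, is establishing this last inequality, which is precisely where the \L ojasiewicz inequality enters. Because $b$ is an isolated singularity, the second variation of the Weiss energy (equivalently, the relevant Jacobi-type operator on $\Omega_b$, acting on pairs (bulk correction, boundary displacement $\zeta$)) has a discrete spectrum; one shows it has no negative eigenvalues (minimality of $b$) and analyzes its kernel. If the kernel is trivial the functional is analytic and uniformly convex near $b$ in the appropriate topology, giving the classical epiperimetric inequality with $\gamma=0$; in general the kernel is nontrivial (e.g.\ rotations of the cone), and one must invoke the infinite-dimensional \L ojasiewicz--Simon inequality for the analytic functional $c\mapsto W(z_c)$ near the critical point $b$, yielding $\mathrm{dist}(c,\,\text{critical manifold})\gtrsim |W(z_c)-W(b)|^{(1+\gamma)/2}$ for some $\gamma\in[0,1)$. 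Projecting onto the orthogonal complement of the kernel and using that the competitor construction can absorb the kernel directions (moving $b$ within its family of critical points) at no energy cost, one converts this \L ojasiewicz gradient bound into the desired energy gain. In the integrable-through-rotation case, Definition \ref{d:integrability} guarantees that the kernel is generated by an explicit finite-dimensional group action under which $W$ is exactly constant, so the "bad" directions are genuinely flat and one recovers $\gamma=0$. The technical difficulties to be careful about are: (i) the singularity of $\Omega_b$ at the origin of the sphere, which forces weighted estimates and a careful choice of function spaces in which the Jacobi operator is Fredholm; (ii) matching the competitor across $\de B_{1-t}$ while keeping $h\ge 0$ and the free boundary graphical; and (iii) tracking that every constant ($\eps,\gamma,\delta_0,\delta,C_d$) depends only on $d$ and $b$, which requires the \L ojasiewicz exponent and constant to be those of the fixed analytic functional attached to $b$.
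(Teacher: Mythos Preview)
Your proposal captures the right high-level themes (build a competitor, invoke a \L ojasiewicz-type inequality, treat the kernel separately) but contains concrete errors and misses the structural mechanism that actually makes the proof work. First, $\Omega_b=\{b>0\}\cap\partial B_1$ is a \emph{smooth} spherical domain --- the isolated singularity of $b$ lies at $0\in\R^d$, not on $\partial B_1$ --- so your concern (i) about weighted estimates near a singularity of $\Omega_b$ is misplaced. Second, the functional $c\mapsto W(z_c)=\frac1d E(c)$ contains the measure term $\HH^{d-1}(\{c>0\})$, which is not Fr\'echet-differentiable (let alone analytic) in $c$ on any natural Banach space; your direct appeal to an infinite-dimensional \L ojasiewicz--Simon inequality for this functional is therefore unjustified, and you give no indication of how to restore analyticity. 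Third, your competitor keeps the free boundary in $B_{1-t}$ equal to the \emph{fixed} cone over $\graph_{\partial\Omega_b}(\zeta)$ and only optimizes the positive-phase function; this cannot produce the required gain when $c$ is (a multiple of) the first eigenfunction on $\{c>0\}$, since in that case the one-homogeneous extension is already harmonic on its support and the entire defect comes from the geometry of the domain. Finally, your assertion that the second variation ``has no negative eigenvalues (minimality of $b$)'' is false for the relevant operator: Section~\ref{s:DeSiJecone} computes that the De Silva--Jerison cone has index $d$, and Definition~\ref{d:integrability} is formulated precisely to accommodate negative eigenfunctions.

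The paper's route is genuinely different. One first splits $c=c_1\phi_1^S+g$ in Dirichlet eigenfunctions on $S=\{c>0\}$; the higher modes $g$ are handled by a direct harmonic-extension-plus-cutoff argument (Proposition~\ref{prop:higher_modes}), using only that $\lambda_2^S>d-1$, with no \L ojasiewicz needed. The hard case is the first mode, and there the competitor is built \emph{slice by slice}: $\bar h(r,\theta)=r\kappa_r\,\phi_1^{\Omega_{g_r}}(\theta)$, where the boundary graph $g_r:\partial\Omega\to\R$ evolves in $r$. The Slicing Lemma~\ref{l:slicing} reduces the epiperimetric gain to an estimate on the \emph{analytic} functional $\mathcal F(\zeta)=\kappa_0^2(\lambda(\zeta)-(d-1))+m(\zeta)-m(0)$ on $C^{2,\alpha}(\partial\Omega)$, whose second variation (Lemma~\ref{l:main}) is a diagonalizable quadratic form on $H^{1/2}(\partial\Omega)$ with finitely many nonpositive eigenvalues. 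Along positive directions the flow is linear, along negative directions it is constant (these only help), and on the finite-dimensional kernel a Lyapunov--Schmidt reduction (Appendix~\ref{s:LS}) collapses everything to a \emph{finite-dimensional} \L ojasiewicz inequality for an analytic function $G$ on $\ker\delta^2\mathcal F(0)\oplus\R$; the competitor on the kernel is the renormalized gradient flow of $G$. In the integrable case one kills the kernel by a rotation (Lemma~\ref{l:kill_linear}) rather than flowing, yielding $\gamma=0$.
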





Epiperimetric inequalities have been used in both minimal surfaces and obstacle-type free boundary problems; they can be divided into two categories: 

{\it The epiperimetric inequalities by contradiction}  (see for instance \cite{taylor1,taylor2,Weiss2,FoSp, GaPeVe}) are based on linearization techniques and are at the moment out of reach for the Bernoulli problem, due to a lack of compactness for the sequence of linearizations. This is essentially due to the nature of the Alt-Caffarelli functional which is fundamentally different from both minimal surfaces and obstacle-type problems in the fact that, while the energy and measure terms balance perfectly, small perturbations can drastically change one term while only slightly modifying the other. Additionally, the contradiction arguments present in the literature only apply to regular points or to singular points under some quite restrictive assumptions, which in our framework might cover only the integrable case. 

{\it The direct epiperimetric inequalities} were introduced in the context of minimal surfaces by Reifenberg \cite{Reif2} and White \cite{Wh}, and extended to the free-boundary setting by the second and third named authors in \cite{SpVe}; the direct method is based on an explicit construction of the competitor
and is more adapted to establish decay estimates around singular points. Indeed, in \cite{CoSpVe,CoSpVe2}, the second and third named author, together with Maria Colombo, proved for the first time a {\it logarithmic epiperimetric inequality} in the context of obstacle-type problems, that is a quantitative estimate on the optimality of the homogeneous extensions which gives a logarithmic decay to the blow-up. The additional term in this inequality, responsible for the logarithmic decay rate, is related to the possible presence of elements in the kernel of a suitable linearized operator which are not infinitesimal generators of isometries of the space. In particular, this type of inequality seems to be more suitable to deal with non-integrable singularities in general. Nevertheless, the method from \cite{CoSpVe,CoSpVe2} as well as the previously available direct methods, are based on a fine but explicit construction of the competitor. In the case of the Alt-Caffarelli functional the final competitor depends on an internal variation around the limit cone, which cannot be written down explicitly since in higher dimensions the one-homogeneous solutions are not classified and, even around the known one-homogeneous global solutions, the space of perturbations is very complex. 

{\it In this paper we establish a new constructive approach to the epiperimetric inequalities.} We provide a method to reduce the (log-)epiperimetric inequality \eqref{e:epii} to a quantitative estimate for a functional defined on the unit sphere, which in the case of Alt-Caffarelli depends  
 on the first eigenvalue and the measure of the associated spherical domain with boundary given by a smooth graph over the limit cone.
Using the second variation of this functional with respect to perturbations of the free boundary and a Lyapunov-Schmidt decomposition, we define a flow of spherical domains which by the \L ojasiewicz inequality decreases the energy in a quantitative way. We then construct the competitor slice by slice, defining it on each sphere of radius $r$ as the eigenfunction of the spherical domain at time $1-r$. 

This approach offers a new perspective on epiperimetric inequalities and provides a general method for the construction of a competitor through a flow on the sphere, which only depends on understanding the functional and its second variation restricted to the sphere. We expect it to be flexible enough to apply to a wide variety of problems; this is evidenced by \cite{EnSpVe}, in which we use the ideas in this paper to prove an analogous (log-)epiperimetric inequality for multiplicity-one stationary cones, from which we deduce uniqueness of the blow-up for almost area-minimizing currents. This approach to (log-)epiperimetry also draws a precise relationship, heretofore not understood, between the kernel of the second variation and the logarithmic decay term in the epiperimetric inequality. 

Let us notice that, in the context of minimal surfaces, the logarithmic decay around isolated singularities was obtained by L. Simon \cite{Simon0} using the \L ojasiewicz inequality. Simon's results hinges on the convergence properties for solutions of certain parabolic PDEs. In the contexts he is concerned with, e.g. minimal surfaces, this inequality is applied to a flow naturally associated to the vanishing of the mean curvature: in our case, an analogous flow does not exist, as stationarity is given by a combination of an external variation, which gives $\Delta u=0$ on the positivity set, and an internal variation, which leads to $|\nabla u|=1$ on the free-boundary. 
As such, even though we also use the \L ojasiewicz inequality, our approach is very different from that of \cite{Simon0} in that we do not use any stationarity condition, but rather only the properties of the one-homogeneous blow-up and the functional. Indeed, Theorem \ref{t:epi_smooth} is completely independent of any minimizing or stationary condition, and is simply the existence of a quantitatively good energy competitor with respect to one homogeneous functions. However its application is achieved through an energy comparison argument, which is the reason why we cannot extend our result to stationary points (and indeed we are not aware of any application of the epiperimetric inequality to critical points which are not almost minimizing).
However, if not for a technical obstruction related to {\it a priori} regularity (that is the $C^{2,\alpha}$ regularity of the free-boundary), our approach would work for ``almost-minimizers" to the Alt-Caffarelli function in the sense of \cite{GuyTo} and \cite{EnGuyTo}, which satisfy no PDE. We solve this issue in \cite{EnSpVe}, where we prove an analogous result for almost area-minimizers.

\subsection{Sketch of the proof and plan of the paper}
Before we introduce some additional terminology and notation, let us briefly sketch the main ideas of the proof. Recall that the epiperimetric inequality asks if, given a trace $c:\partial B_1\to \R^+$ on the sphere, which is sufficiently close to a one-homogeneous minimizer $b$, one can construct a competitor $h \in H^1(B_1)$, with $h|_{\partial B_1} = c$, whose energy is quantitatively smaller than that of the one-homogenous extension of $c$. 

We prove Theorem \ref{t:epi_smooth} in  Section \ref{ss:theorem2}. The first step of the proof is to write the trace $c$ as a sum of the Dirichlet eigenfunctions on the spherical domain $\{c > 0\}$. In Subsection \ref{sub:proof_of_epi}, we show that the epiperimetric inequality reduces to two separate estimates, one on the first eigenfunction of $\{c > 0\}$ and another on the higher modes (Proposition \ref{prop:higher_modes} and Proposition \ref{prop:first_mode}). We deal with the higher modes in Subsection \ref{sub:higher_modes} by using a harmonic extension and a cut-off argument in the spirit of \cite{SpVe}. The rest of Section \ref{ss:theorem2} is dedicated to the proof of Proposition \ref{prop:first_mode}, that is the case when the trace is (a multiple of) the first eigenfunction on its support. We notice that in this case replacing the one-homogeneous extension of the trace with the harmonic function on the cone is not sufficient for the quantitative gain from the (log-)epiperimetric inequality since the homogeneity of the harmonic function is very close to one. Thus, we need an ``internal variation" argument in order to smoothly move the trace $c$ towards  the ``closest" one-homogeneous global solution $b$. Nevertheless, writing explicitly such an internal variation in dimension $d\ge 3$ is an extremely difficult (not to say impossible) task since we have infinite directions in which we can perturb a spherical domain. In particular, we cannot exhibit an explicit competitor as in \cite{SpVe,CoSpVe1,CoSpVe2}.   
Thus, instead of trying to write down an explicit internal variation, we look at the problem from another perspective. We briefly explain the main idea below. 

We read the competitor $h:B_1\to\R$ slice be slice, on each sphere $\partial B_r$, as $h(r,\theta)=r \phi_{t(r)}(\theta)$, where $\{t\mapsto\phi_{t}\}_{t\ge 0}$ is a ``flow" of functions on $\partial B_1$, which is then reparametrized over each sphere $\partial B_r$. In particular, we ask that each $\phi_{t}$ is (up to a constant\footnote{In this brief discussion we completely omit the fact that the constant multiplying the first eigenfunction might (and should) also change along the flow and, instead, set it to be a constant, $\kappa_0$. In fact, this constant appears in all the functionals involved and represents a non trivial issue which interferes with definition of the competitor and the notion of integrability.}) the first eigenfunction on the spherical set $\Omega_t=\{\phi_t>0\}$, whose boundary is given by the spherical graph $\zeta(t)$ over $\partial\{b>0\}$, with $\lambda(\zeta(t))$ and $m(\zeta(t))$ being the first eigenvalue and the surface measure of the spherical domain $\Omega_t$. The key observation is that the Weiss' boundary adjusted energy can be dis-integrated along the spheres (slices) of radius $r$ as follows: 
$$W(h)=\int_0^1\big[\mathcal F(\zeta(t(r)))+E.T.(r)\big]r^{d-1}\,dr,\quad \text{where}\quad \mathcal F(\zeta)=\kappa_{0}^2\big(\lambda(\zeta)-(d-1)\big)+\big(m(\zeta)-m(0)\big)$$
and $E.T.(r)$ is an error term such that $E.T.(r)\sim |t'(r)|^2$, which becomes a lower order term once we choose the velocity of the parametrization, $t(r)$, small enough. Thus, we reduce the epiperimetric inequality to the study, in a neighborhood of zero, of the functional $\mathcal F(\zeta)$ defined on the functions $\zeta:\partial\{b>0\}\to\R$. The first variation of $\mathcal F$ vanishes in zero since the trace of the global minimizer $b$ on the sphere inherits the stationarity condition $|\nabla b|=1$ on the free boundary. Thus, starting from a trace determined by the graph of $\zeta$, at second order we find 
\begin{align}
W(h)-(1-\eps)W(z)&\sim \int_0^1\Big(\mathcal F(\zeta(t(r)))-(1-\eps)\mathcal F(\zeta)\Big)r^{d-1}\,dr\notag\\
&\sim \int_0^1\Big(\delta^2\mathcal F(0)\big[\zeta(t(r)),\zeta(t(r))\big]-(1-\eps)\delta^2\mathcal F(0)[\zeta,\zeta]\Big)r^{d-1}\,dr\label{e:blabla}
\end{align}
The second variation $\delta^2\mathcal F(0)$ is a non-local quadratic form over $H^{1/2}(\partial \{b>0\})$ and requires a more careful analysis (we gather the main results in Lemma \ref{l:main} and carry out the proof in Appendix \ref{s:app}). Indeed, the diagonalization of $\delta^2\mathcal F(0)$ splits the space into: a finite dimensional ``index" containing the directions in which $\delta^2\mathcal F(0)$ is negative; a finite dimensional kernel, $\ker(\delta^2\mathcal F(0))$; and an infinite dimensional subspace of $H^{1/2}$ containing all the positive eigenspaces of $\delta^2\mathcal F(0)$. Now, we decompose the initial graph, $\zeta$, as $\zeta=\zeta_-+\zeta_0+\zeta_+$, where $\zeta_-$ is in the index, $\zeta_0\in \ker(\delta^2\mathcal F(0))$ and $\zeta_+$ is a positive direction. The precise definition of the flow requires a Lyapunov-Schmidt reduction (see Appendix \ref{s:LS}); here, in order to simplify the presentation as much as possible, we omit this technical step. The idea
is to define the flow $\zeta(t(r))$ along each direction as follows: along the negative directions the flow remains constant, since these directions have only negative contribution to the energy $\mathcal F$; along the positive direction the flow moves to zero at a small constant speed, that is $\zeta_+(t(r))\sim (1-(d+1)\eps(1-r))\zeta_+$, which assures that the right-hand side of \eqref{e:blabla} is negative and that the error term $E.T.$ is of lower order. Notice that at this point, if we have the additional assumption of integrability through rotations (see Subsection \ref{sub:first_mode}), then we can disregard elements in the kernel of the second variation and assume $\zeta_0=0$, which will conclude the proof.


When the cone is not integrable through rotations, we must address the kernel elements. In this case, let us assume for simplicity that $\zeta=\zeta_0$ and $\mathcal F(\zeta_0)>0$, and define $\zeta_0(t)$ to be the solution at time $t$ of the (rescaled) gradient flow of $\mathcal F$ on the finite dimensional kernel $\ker (\delta^2\mathcal F(0))$
$$\frac{d}{dt}\zeta_0(t)=-\frac{\nabla f(\zeta_0(t))}{|\nabla f(\zeta_0(t))|},\qquad\text{where}\qquad f=\mathcal F\res \ker (\delta^2\mathcal F(0)).$$
The fact that the gradient flow always gives a quantitative improvement follows from the \L ojasiewicz inequality, $ |f|^{1-\gamma}\lesssim |\nabla f|$, for $\gamma\in(0,\sfrac12]$, (see \cite{Loj}) applied to the analytic function $f$ and the fact that $f(\zeta_0(t))$ is decreasing in $t$:
\begin{align*}
\mathcal F(\zeta_0(t))-(1-\eps)\mathcal F(\zeta_0)&=\eps\mathcal F(\zeta_0)+\int_0^t \zeta_0(s)\cdot \nabla f(\zeta_0(s))\,ds=\eps\mathcal F(\zeta_0)-\int_0^t |\nabla f(\zeta_0(s))|\,ds\\
&\le\eps\mathcal F(\zeta_0)-\int_0^t |\mathcal F(\zeta_0(s))|^{1-\gamma}\,ds\le\eps\mathcal F(\zeta_0)-t |\mathcal F(\zeta_0(t))|^{1-\gamma}.
\end{align*}
Assuming that $\mathcal F(\zeta_0(t))\ge \frac12\mathcal F(\zeta_0)$ (otherwise the left-hand side is trivially negative), we obtain 
\begin{align*}
\mathcal F(\zeta_0(t(r)))-(1-\eps)\mathcal F(\zeta_0)&\le\eps\mathcal F(\zeta_0)-2^{\gamma-1}t(r) |\mathcal F(\zeta_0)|^{1-\gamma}.
\end{align*}
Setting $t(r)=\delta(1-r)$, taking into account the error term $E.T.\sim\delta^2$ and integrating in $r$, we get that along the directions of the kernel 
$$W(h)-(1-\eps)W(z)\lesssim \eps|\mathcal F(\zeta_0)|-\delta |\mathcal F(\zeta_0)|^{1-\gamma}+\delta^2,$$
which becomes negative if we choose $\delta\sim |\mathcal F(\zeta_0)|^{1-\gamma}$ for the time of existence of the flow and $\eps\sim |\mathcal F(\zeta_0)|^{1-2\gamma}\sim W(z)^{1-2\gamma}$ for the quantitative gain in the epiperimetric inequality.
\medskip

In Section \ref{ss:conclusion} we apply the (log-)epiperimetric inequality to obtain our main Theorems \ref{t:main} and \ref{t:main2}. Some of this is standard; once one gets the (log-)epiperimetric inequality, the decay of $W(r)$ and thus the power-rate (or logarithmic rate) of convergence to the blowup follows by purely elementary considerations. However, there is an additional difficulty that our (log-)epiperimetric inequality, Theorem \ref{t:epi_smooth}, applies only to minimizers whose free boundary restricted to the sphere is a smooth graph over the cone, $b$. To apply the theorem to arbitrary minimizers which are close enough to a cone with isolated singularity, we employ a compactness argument and use the $\varepsilon$-regularity result of Alt-Caffarelli. 
{Finally, combining this estimate with the (log-)epiperimetric inequality, we show that the argument can be reiterated on a dyadic scale.} 

Finally, in Section \ref{s:DeSiJecone}, we finish the proof of Corollary \ref{c:DeSiJe} by showing that $b_{\nu, \theta_0} \in \mathcal B$ is integrable through rotations. First we produce, with spherical harmonics, an orthogonal basis of $H^{1/2}(\partial \{b_{\nu, \theta_0} > 0\}\cap B_1)$ and, using the symmetries of the cone, show that this is an eigenbasis for the second variation, around $b_{\nu, \theta_0}$, of the Alt-Caffarelli functional restricted to the sphere. Integrability follows after explicitly computing some of the associated eigenvalues.

\section{Notations and preliminary results}\label{ss:notation}

\subsection{Notations}
It will be convenient to split the Weiss energy into two pieces, one without the measure term and one with it, so we set
$$W_0(u):=\int_{B_1}|\nabla u|^2\,dx-\int_{\partial B_1}u^2\,d\HH^{d-1}\qquad\text{and}\qquad W(u):=W_0(u)+\big|B_1\cap\{u>0\}\big|.$$

\noindent We will often work in spherical coordinates: a point $x\in \R^d$ can be written as $(r,\theta)$, where $r=|x|$ and $\theta\in \mathbb{S}^{d-1}=\partial B_1$.
Given a function $u\colon \Omega \to \R$ (almost always a minimizer), a point $x_0$ (almost always in $\partial \{u > 0\}$) and a radius $r > 0$ we can define the rescaled function 
\begin{equation}\label{rescaledu}
u_{r, x_0}(x) : = \frac{u(rx + x_0)}{r} \qquad \mbox{for }0<r<\dist(x_0,\de \Omega).
\end{equation}
When $x_0$ is clear from the context and $r \in \{r_j\}_{j \in \mathbb N}$ we will write $u_j$ to mean $u_{r_j, x_0}$. 

\noindent Finally, let $u$ be a minimizer to \eqref{eq:altcaffunctional} and $x_0 \in \partial \{u > 0\}$. Assume there is a sequence $r_j \downarrow 0$ such that $u_{r_j, x_0} \rightarrow u_\infty$ uniformly on compacta as $j\rightarrow \infty$. We then say that $u_\infty$ is a \emph{blow-up of $u$ at $x_0$}. It follows by the convergence theorems of \cite{AlCa} and the work of \cite{Weiss1} that $u_\infty$ is a one-homogenous, globally-defined minimizer to \eqref{eq:altcaffunctional}.

\subsection{Eigenvalues and eigenfunction on a spherical set $S$}\label{sub:prel:eigen}
Let $S$ be an open subset of the sphere $\partial B_1\subset\R^d$. 
On $S$ we consider the family of Dirichlet eigenfunctions for the Laplace-Beltrami operator, $\{\phi_j^S\}_{j\ge 0}$, and the corresponding sequence of eigenvalues 
$$0\le \lambda_0^S\le\lambda_1^S\le\dots\le\lambda_j^S\le\dots ,$$ 
counted with their multiplicity. Each function $\phi_j$ is a solution of the the PDE
\begin{equation}\label{e:eigenfunction}
-\Delta_{\partial B_1} \phi_j^S=\lambda_j^S\phi_j^S\quad\text{in}\quad S,\qquad \phi_j^S=0\quad\text{on}\quad \partial S,\qquad\int_{\partial B_1}|\phi_j^S|^2\,d\HH^{d-1}=1,
\end{equation}
where $\HH^{d-1}$ is the $(d-1)$-dimensional Hausdorff measure, $\Delta_{\partial B_1}$ is the Laplace-Beltrami operator on the unit sphere $\partial B_1$. Thus, every function $c\in H^1_0(S)$ can be expressed in a unique way in Fourier series as 
$$c(\theta)=\sum_{j=0}^\infty c_j\phi_j^S(\theta),\qquad\text{where, for every $j\ge 0$,}\qquad c_j:=\int_S c(\theta)\phi_j^S(\theta)\,d\HH^{d-1}(\theta).$$
The harmonic extension $h_c$ of $c$ to the cone 
\begin{equation}\label{e:C_S}
\CC_S:=\big\{(r,\theta)\in\R^+\times\partial B_1\ :\ r>0,\ \theta\in S\big\},
\end{equation}
is the solution of the equation 
$$-\Delta h_c=0\quad\text{in}\quad \CC_S\,,\qquad h_c=c\quad\text{on}\quad S\,,\qquad h_c=0\quad\text{on}\quad B_1\cap \partial \CC_S\,,$$
and can be written explicitly in polar coordinates as 
$$h_c(r,\theta)=\sum_{j=0}^\infty c_j r^{\alpha_j^S} \phi_j^S(\theta),$$
where for every $j\in\N$ the homogeneity constant $\alpha_j^S>0$ is uniquely determined by the equation 
$$\alpha_j^S(\alpha_j^S+d-2)=\lambda_j^S\,.$$
In what follows we will often drop the index $S$ and we simply use the notation $\phi_j$, $\lambda_j$ and $\alpha_j$.

\begin{oss}\label{oss:1hom}
[The case of $1$-homogeneous minimizers] Let $b$ be a $1$-homogeneous minimizer of the Alt-Caffarelli functional \eqref{eq:altcaffunctional} and let us denote its trace on $\de B_1$, with a slight abuse of notation, still by $b$. In polar coordinates $b(r,\theta)=r\,b(\theta)$. Let $\Omega:=\{b>0\}\cap\partial B_1$ be the positivity set of $b$ on the unit sphere. Then, using \eqref{e:stationa}, it is easy to see that $\Omega$ is a connected open set and there is a constant $\kappa_0 = \kappa_0(\Omega)>0$ such that 
\begin{equation}\label{e:k_0}
	b\equiv \kappa_0\,\phi_1^{\Omega} \,,
 \qquad\lambda_1^{\Omega}=d-1
\qquad \mbox{and}\qquad \lambda_2^{\Omega}> d-1,
	\end{equation}
where $\phi_1^{\Omega}$ is the normalized eigenfunction given by \eqref{e:eigenfunction} with $S=\Omega$. Moreover, on (the regular part of) the boundary of the spherical set $\Omega$ the following extremality condition is satisfied:  
\begin{equation*}\label{e:k_00}\kappa_0\, \de_\nu \phi_1^{\Omega}(x)=\de_\nu b(x)=-1 \quad \mbox{for every }\ x\in \de \Omega\,,
	\end{equation*}
where $\nu$ denotes the outward pointing normal to $\de \Omega$. 
\end{oss} 
Notice that if two smooth connected open sets are close in $C^1$, then their spectra are also close. Thus $\lambda_2^{S}$ remains strictly greater than $d-1$ for spherical sets $S$ that are close to $\Omega$. We give the precise statement in the following remark. 
\begin{oss}
Suppose that  $\de\Omega$ is smooth and that $\partial S$ is the graph of the function $\zeta:\partial\Omega\to\R$ with $\|\zeta\|_{C^{1,\alpha}}\leq \delta$, where $\delta > 0$ is small enough depending only on the dimension and $b$. Then  \begin{equation}\label{gapforlambda2} 
\lambda_2^S - (d-1) > ( \lambda_2^S-\lambda_2^{\Omega})+ (\lambda_2^{\Omega}-(d-1)) \geq  \gamma(d,b)/2,
\end{equation}
where $\gamma(d,b):=\lambda_2^{\Omega}-(d-1)=\lambda_2^{\Omega}-\lambda_1^{\Omega}>0$ depends only on $b$ and the dimension $d$. 
\end{oss}

\subsection{Internal variations on spherical domains and integrability}\label{ss:var}
Given a smooth domain $\Omega\subset \partial B_1$, determined by a $1$-homogeneous minimizer $b$ as in Remark \ref{oss:1hom}, and a function, $\zeta \in C^{2,\alpha}(\de \Omega)$, we consider the functional
$$
\mathcal F(\zeta):=\kappa_0^2\,\lambda(\zeta)+m(\zeta)-\big(\kappa_0^2\,(d-1)+m(0)\big)\,,
$$
where the various terms are defined in the following way:
\begin{itemize}
	\item $\kappa_0$ is the constant given by \eqref{e:k_0};
	\item $\Omega_\zeta$ is the domain in $\de B_1$ whose boundary is the graph of $\zeta$ over $\de \Omega$ in the sense of \eqref{e:graph};
	\item $\lambda(\zeta)$ is the first Dirichlet-Laplacian eigenvalue of $\Omega_\zeta$ and $\phi_\zeta$ is the corresponding eigenfunction given by \eqref{e:eigenfunction} with $S=\Omega_\zeta$;
	\item $m(\zeta):=\HH^{d-1}(\Omega_\zeta)$ and in particular $m(0):=\HH^{d-1}(\Omega)$.
\end{itemize}
Given two functions $g,\zeta\in C^{2,\alpha}(\partial \Omega)$, the first and the second variations of $\mathcal F$ are given by 
$$\delta\mathcal F(g)[\zeta]=\frac{d}{dt}\Big\vert_{t=0}\mathcal F(g+t\zeta)\qquad\text{and}\qquad \delta^2\mathcal F(g)[\zeta,\zeta]=\frac{d^2}{dt^2}\Big\vert_{t=0}\mathcal F(g+t\zeta).$$
The most important properties of the functional $\mathcal F$ are listed in the following lemma, which is essential for the proof of Theorem \ref{t:epi_smooth}.

\begin{lemma}\label{l:main} 
Let $g,\zeta\in C^{2,\alpha}(\de \Omega)$. With the above definitions the following holds.
\begin{enumerate}[(i)]
\item There is a constant $C$, depending only on $d$ and $b$, such that if $\zeta_i\in C^{2,\alpha}(\partial\Omega)$, for $i=1,\dots, n$, are such that $\|g\|_{C^{2,\alpha}(\partial\Omega)}\le C$ and $\|\zeta_i\|_{C^{2,\alpha}(\partial\Omega)}\le C$, then the function $(t_1,\dots,t_n)\mapsto\mathcal F(g+\sum_{i=1}^nt_i\zeta_i)$ is analytic on $]-2,2[$.
\item The first variation vanishes at zero, that is $\delta\mathcal F(0)[\zeta]=0$, for every $\zeta\in C^{2,\alpha}(\partial\Omega)$.
\item The function $t\mapsto\phi_{g+t\zeta}$ is differentiable in $t\in]-2,2[$ and 
the derivative $\ds\frac{d}{dt}\Big\vert_{t=0}\phi_{g+t\zeta}=:\delta\phi_g[\zeta]\in L^2(\partial B_1)$ satisfies the estimate 		
\begin{equation}\label{e:bound_on_g}
\int_{\partial B_1} |\delta\phi_g[\zeta]|^2\,d\HH^{d-1} < C\,\|\zeta\|_{L^2(\partial\Omega)}^2   \,,
\end{equation}
where $C$ depends only on $d$, $b$ and $\|g\|_{C^{2,\alpha}(\partial\Omega)}$.
\item $\delta^2\mathcal F(g)$ is a quadratic form on $H^{\sfrac12}(\partial\Omega)\times H^{\sfrac12}(\partial\Omega)$.
\item 
There is a function $\omega(t):[0,1] \rightarrow \mathbb R^+$, satisfying $\ds\lim_{t\downarrow 0} \omega(t) = 0$, such that 
	\begin{equation}\label{e:modulus_of_continuity}
	\left|\mathcal \delta^2 \mathcal F(g)[\zeta, \zeta]- \delta^2 \mathcal F (0)[\zeta,\zeta]\right| < \omega\big(\|g\|_{C^{2,\alpha}(\partial\Omega)}\big) \|\zeta\|_{H^{1/2}(\partial\Omega)}^2.                   
	\end{equation} 
	\item There exists an orthonormal basis $(\xi_i)_i$ of $H^{\sfrac12}(\partial\Omega)$ such that $\xi_i\in C^{2,\alpha}(\de \Omega)$, for $i\in\N$, and 
		\begin{equation}\label{e:diagonalization}
		\delta^2\mathcal F(0)[\xi_i,\xi_j]:=\lambda_i \delta_{ij}\qquad \mbox{for every }i,j\in \N\,,
		\end{equation}
		where $\lambda_i\to \infty$ is a non-decreasing sequence of eigenvalues. In particular, there is a constant $C=C(d, b)>0$ such that 
		\begin{equation}\label{e:itr}
		 \dim \ker (\delta^2\mathcal F(0))\leq C<\infty.
		\end{equation}
\end{enumerate}
\end{lemma}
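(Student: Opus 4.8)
The plan is to establish the seven properties of the functional $\mathcal F$ essentially in the order listed, since several of them feed into the others. The conceptual backbone is that $\mathcal F(\zeta)$ decomposes as $\kappa_0^2(\lambda(\zeta)-(d-1)) + (m(\zeta)-m(0))$, so every analytic or differentiability statement about $\mathcal F$ follows from the corresponding statement about (a) the first Dirichlet eigenvalue $\lambda(\zeta)$ of the perturbed spherical domain $\Omega_\zeta$, and (b) the surface measure $m(\zeta) = \HH^{d-1}(\Omega_\zeta)$. Both are obtained by pulling back to the fixed reference domain $\Omega$: the map $\zeta \mapsto \Phi_\zeta$ sending $\Omega \to \Omega_\zeta$ via the exponential-graph construction \eqref{e:graph} is, for $\|\zeta\|_{C^{2,\alpha}}$ small, a $C^{2,\alpha}$-diffeomorphism depending analytically on $\zeta$ (and polynomially on finitely many parameters $t_i$); conjugating the Laplace--Beltrami operator by $\Phi_\zeta$ turns the eigenvalue problem on the moving domain into an eigenvalue problem for an elliptic operator with $\zeta$-dependent (analytic in $\zeta$) coefficients on the \emph{fixed} domain $\Omega$. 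Since $\lambda_1^\Omega = d-1$ is simple and isolated (by \eqref{e:k_0}, as $\lambda_2^\Omega > d-1$), analytic perturbation theory for isolated simple eigenvalues of self-adjoint elliptic operators (Kato-type) gives that $\lambda(\zeta)$ and the normalized eigenfunction $\phi_\zeta$ (viewed on $\partial B_1$ via pushforward) depend analytically on $\zeta$ in the relevant sense. This simultaneously yields (i) (combined with the elementary fact that $m(\zeta) = \int_\Omega J\Phi_\zeta\, d\HH^{d-1}$ is analytic via the Jacobian), and (iii) (differentiability of $t \mapsto \phi_{g+t\zeta}$ with the quantitative bound \eqref{e:bound_on_g} coming from the explicit formula for the derivative of an eigenfunction, $\delta\phi_g[\zeta]$ solving an inhomogeneous equation whose right-hand side is controlled by $\|\zeta\|_{L^2(\partial\Omega)}$ via the Hadamard variation/boundary data, using that the eigenfunction vanishes on $\partial\Omega_g$ so only the normal component of the deformation enters, and $\|\zeta\|_{C^{2,\alpha}} \le C_d\|\zeta\|_{L^2}$ is not even needed here but $\|g\|_{C^{2,\alpha}}$ smallness is).

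For (ii), the first variation $\delta\mathcal F(0)[\zeta]$ is the Hadamard derivative: $\delta\lambda(0)[\zeta] = -\int_{\partial\Omega} |\partial_\nu \phi_1^\Omega|^2\, \zeta\, d\HH^{d-2}$ and $\delta m(0)[\zeta] = \int_{\partial\Omega} \zeta\, d\HH^{d-2}$ (first variation of area of a graph over $\partial\Omega$ in the sphere — here I should be careful that the relevant ``curvature'' term drops because we differentiate the $(d-1)$-volume enclosed, so only the boundary term survives; actually $\delta m(0)[\zeta] = \int_{\partial\Omega}\zeta\,d\HH^{d-2}$ up to a sign convention on $\nu$). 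Combining with $\kappa_0^2 |\partial_\nu \phi_1^\Omega|^2 = (\partial_\nu b)^2 = 1$ on $\partial\Omega$ (the extremality/stationarity condition $|\nabla b| = 1$ from Remark \ref{oss:1hom}), the two boundary integrals cancel: $\delta\mathcal F(0)[\zeta] = \kappa_0^2(-\int |\partial_\nu\phi_1|^2\zeta) + \int \zeta = \int(-1 + 1)\zeta = 0$. This is exactly the place where minimality of $b$ enters (through $|\nabla b|=1$), and it is short once the Hadamard formulas are in hand. For (iv), the second variation $\delta^2\mathcal F(0)$ is read off from the second-order Taylor expansion of the pulled-back eigenvalue problem plus the second variation of area; the key structural point is that the eigenvalue shape derivative of second order has the form $\int_{\partial\Omega}(\text{boundary terms quadratic in }\zeta) - 2\int_\Omega |\nabla \delta\phi|^2 + \dots$, and one must verify this extends to a bounded symmetric bilinear form on $H^{1/2}(\partial\Omega)\times H^{1/2}(\partial\Omega)$. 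The natural way is to show $\delta^2\mathcal F(0)[\zeta,\zeta] = \langle \zeta, L\zeta\rangle$ where $L = $ (a Dirichlet-to-Neumann-type / Steklov operator coming from the linearized problem) $+$ (a zeroth order potential from curvature and from the shift in $\lambda$), which is a classical pseudodifferential operator of order $1$, hence bounded $H^{1/2}\to H^{-1/2}$, giving the quadratic-form bound; equivalently, just show directly $|\delta^2\mathcal F(0)[\zeta,\zeta]| \le C\|\zeta\|_{H^{1/2}}^2$ using the bound \eqref{e:bound_on_g} on $\delta\phi_0[\zeta]$ in $H^1$ (hence its trace-type quantities) together with trace/interpolation inequalities.

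For (v), the modulus-of-continuity estimate \eqref{e:modulus_of_continuity}: having written $\delta^2\mathcal F(g)[\zeta,\zeta]$ via the pull-back as an integral of $\zeta$, $\nabla\zeta$, $\delta\phi_g[\zeta]$ against coefficients that depend continuously (indeed analytically) on $g$ in $C^{2,\alpha}$, and using that $\delta\phi_g[\zeta] \to \delta\phi_0[\zeta]$ in $H^1$ as $g\to 0$ in $C^{2,\alpha}$ (again from the perturbation theory, with rate), the difference $\delta^2\mathcal F(g)-\delta^2\mathcal F(0)$ is controlled by $\omega(\|g\|_{C^{2,\alpha}})\|\zeta\|_{H^{1/2}}^2$ where $\omega$ packages the moduli of continuity of all these ingredients. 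Finally, (vi) is the spectral theorem applied to the self-adjoint operator $L$ representing $\delta^2\mathcal F(0)$: since $\delta^2\mathcal F(0)[\zeta,\zeta] = \langle\zeta,L\zeta\rangle_{L^2(\partial\Omega)}$ with $L$ of the form ``elliptic first-order positive part with compact resolvent $+$ lower order'', $L$ has discrete spectrum $\lambda_i\to\infty$ with finite multiplicities and an $L^2$-orthonormal eigenbasis $(\xi_i)$; orthonormality in $H^{1/2}$ instead can be arranged by Gram--Schmidt within each finite-dimensional eigenspace or by rescaling, and elliptic regularity bootstraps each $\xi_i$ (a solution of a smooth elliptic eigenvalue equation on $\partial\Omega$ — a smooth closed manifold, as $b$ has an \emph{isolated} singularity so $\partial\Omega$ is smooth) to $C^{2,\alpha}$; \eqref{e:itr} then follows because $\ker(\delta^2\mathcal F(0))$ is the $\lambda_i=0$ eigenspace, which is finite-dimensional, with a dimension bound depending only on $d$ and $b$ via, e.g., a Weyl-type count of eigenvalues below a fixed threshold. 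The bulk of the real work — and the main obstacle — is the first bullet: setting up the pull-back carefully enough that the eigenvalue and eigenfunction are genuinely analytic/differentiable in $\zeta$ with the stated \emph{quantitative} $L^2$-control \eqref{e:bound_on_g}, and then identifying $\delta^2\mathcal F(0)$ with an honest pseudodifferential operator of order one so that $H^{1/2}$ is seen to be the correct form domain; this is the content that the authors defer to Appendix \ref{s:app}. Everything else is bookkeeping on top of standard shape-derivative (Hadamard) formulas and Kato's analytic perturbation theory.
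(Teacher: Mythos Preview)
Your outline is correct and follows essentially the same route as the paper's Appendix~\ref{s:app}: Hadamard formulas for $\delta m$ and $\delta\lambda$ plus the stationarity condition $\kappa_0\partial_\nu\phi_1^\Omega=-1$ give (ii); the identification of $\delta^2\mathcal F(0)$ with a Dirichlet-to-Neumann operator $T$ plus the multiplication by $H_{\partial\Omega}$ gives (iv) and, via compact-resolvent spectral theory for $T+H_{\partial\Omega}+\Lambda$, (vi); and (v) is obtained by splitting $\delta^2\mathcal F(g)-\delta^2\mathcal F(0)$ into boundary integrals with coefficients depending continuously on $g$ in $C^{2,\alpha}$. The only place your sketch is noticeably thinner than the paper is (iii): the equation for $\phi_g'$ is posed \emph{at} the first eigenvalue, so $(-\Delta-\lambda_g)$ has a one-dimensional kernel and the $L^2$ bound does not follow from a naive energy estimate; the paper handles this by decomposing $\phi_g'=h_g+\psi_g-\phi_g\langle h_g,\phi_g\rangle$ with $h_g$ harmonic (controlled in $L^2$ by $\|\zeta\|_{L^2}$ via $L^1/L^\infty$ interpolation and the maximum principle) and $\psi_g\perp\phi_g$ (controlled via the spectral gap $\lambda_2^{\Omega_g}-\lambda_g>0$), which is worth flagging explicitly.
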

\noindent The proof of this Lemma is rather technical so we give it in Appendix \ref{s:app} in order not to disrupt the flow of the proof. We only remark here that (i) follows from \cite{micheletti,nagy}, (ii) and (iv) follow by the general representation of the first two variations given in Subsection \ref {sub:first_and_second_variation}, (iii) is proved in Subsection \ref{sub:bounds_on_phi'}, (v) in Subsection \ref{sub:modulus_of_continuity} and (vi) in Subsection \ref{sub:variations_in_zero}. Notice that \eqref{e:itr} leads naturally to the definition of integrability through rotations.

\begin{deff}[Integrability]\label{d:integrability}
Let $b\in H^1(B_1)$ be a $1$-homogeneous minimizer of $\mathcal E$. We say that $b$ is integrable through rotation if all the eigenfunctions corresponding to the eigenvalue $0$ are infinitesimal generators of rotations of $\Omega=\{b>0\}$ and if the constant perturbation is orthogonal in $L^2(\partial \Omega)$ to all the negative eigenfunctions, that is
\begin{equation*}
\int_{\partial \Omega} \xi d\mathcal H^{d-2} = 0,\; \mbox{ for every negative eigenfunction $\xi$ of } \delta^2 \mathcal F(0)\,.
\end{equation*}
\end{deff}

\noindent This definition differs slightly from the usual one given in the context of minimal surfaces, in that it requires orthogonality for the \emph{negative eigenfunctions} that is, the eigenfunctions associated to negative eigenvalues. This is due to the fact that $\mathcal F$ can be perturbed not only geometrically (i.e. by changing the domain) but also by changing $\kappa_0$ (the coefficient in front of the first eigenfunction of the domain). We also notice that, by Subsection \ref{sub:first_and_second_variation}, $\ds\int_{\partial \Omega} \xi d\mathcal H^{d-2}=\delta\lambda(0)[\xi]$. 

We shall prove in Section \ref{s:DeSiJecone} that the De Silva-Jerison cone is integrable through rotations by explicitly finding the basis $\{\xi_i\}$ and computing the associated eigenvalues.

We should also note that the De Silva-Jerison cones are the only known minimizing cones with isolated singularity and, since they satisfy Definition \ref{d:integrability}, it is not clear whether there are any cones which are not integrable. However, in examples from other variational problems (e.g. minimal surfaces \cite{AdSi} and obstacle problems \cite{CoSpVe,FiSe})  $C^{1,\alpha}$ regularity may not hold at even isolated critical points and $C^{1,\log}$ regularity, as in Theorem \ref{t:main}, is optimal. If this phenomena also occurs for minimizers of the Alt-Caffarelli functional it must be the case that there exist minimizing cones with isolated singularities that do not satisfy Definition \ref{d:integrability}.

\section{Proof of Theorem \ref{t:epi_smooth}}\label{ss:theorem2}
This section is the core of the paper and deals with the proof of Theorem \ref{t:epi_smooth}. We split it into several parts. We first define the competitor and list several of its properties, from which Theorem \ref{t:epi_smooth} will follow. Then, in the subsequent subsections, we prove that these properties hold through external and internal variations.

\subsection{Definition of the competitor and its main properties}
Given a function $c\in H^1(S; \R_+)$, on the set $S=\{c>0\}\subset \de B_1$, we consider its decomposition in Fourier series over $S$
\begin{equation}\label{decomposec}
c(\theta)=c_1\phi_1(\theta)+g(\theta)\,,\quad\text{where}\quad g(\theta):=\sum_{j=2}^\infty c_j\phi_j(\theta).
\end{equation}
\noindent In this notation, the one-homogeneous extension of $c$ in $B_1$ is given by
\begin{equation}\label{whoisz}
z(r,\theta)=c_1r\phi_1(\theta)+rg(\theta).
\end{equation}
Our \emph{competitor} $h\in H^1(B_1; \R_+)$ will be defined as
\begin{equation}\label{whoish}
h(r,\theta)=h_1(r,\theta)+\psi_{\rho}(r)h_g(r,\theta),
\end{equation}
where 
\begin{itemize}
\item $\psi_{\rho}:B_1\to\R_+$ is the function defined by 
\begin{equation}\label{e:psi_rho}
\Delta\psi_\rho=0\quad\text{in}\quad B_{2\rho}\setminus B_\rho\,,\qquad \psi_\rho=1\quad\text{on}\quad B_1\setminus B_{2\rho}\,,\qquad \psi_\rho=0\quad\text{on}\quad B_\rho\,.
\end{equation}
In particular, $\psi_\rho$ depends only on the variable $r$, $\psi_\rho=\psi_\rho(r)$.
\item The radius $\rho\in(0,1)$ depends only on the dimension $d$ and is chosen in Proposition \ref{prop:higher_modes}.
\item $h_{g}:B_1\to\R$ is the harmonic extension of $g$ to the cone $\CC_S:=\{(r,\sigma)\ :\ r>0,\ \sigma\in S\}$,  
\begin{equation}\label{e:h_g}
\Delta h_g=0\quad\text{in}\quad \CC_S\,,\qquad h_g=g\quad\text{on}\quad \partial B_1\,,\qquad h_g=0\quad\text{on}\quad B_1\setminus \CC_S\, .
\end{equation}
\item $h_1 :B_1\to\R$ is defined by
\begin{equation}\label{e:h_1}
h_1(r, \theta):=
\begin{cases}
c_1\,r\,\phi_1(\theta)	&\mbox{if }r\in[\rho,1]\,,\\
\rho\, \bar h(\sfrac{r}{\rho},\theta)	&\mbox{if }r\in[0,\rho]\,,
\end{cases}
\end{equation}
where $\bar h$ is the competitor from Proposition \ref{prop:first_mode} corresponding to the trace $c_1\phi_1$.
\end{itemize}

The epiperimetric inequality \eqref{e:epii} is then a consequence of the following two propositions. The first deals with the terms corresponding to the higher eigenvalues, for which the energy can easily be improved by taking the harmonic extension.

\begin{prop}[Homogeneity improvement of the higher modes: the external variation]\label{prop:higher_modes}
Let $S\subset\partial B_1$ be an open set and 
$g\in H^1_0(S)$ be a function, expressed in Fourier series over $S$ as 
$$\ds g(\theta)=\sum_{j=k}^\infty c_j\phi_j^S(\theta)\,,\quad\text{where}\quad k\ge 1\quad\text{is such that}\quad\lambda_k^S> d-1.$$ Then, there are constants $\eps_0>0$ and $\rho_0>0$, depending only on the dimension and the gap $\lambda_k^S-(d-1)$, such that for every $0<\eps\le \eps_0$ and $0<\rho\le\rho_0$ we have 
\begin{equation}\label{e:higher_modes}
W_0(\psi_{\rho}h_g)-(1-\eps)W_0(z_g)\le 0,
\end{equation}
where $z_g(r,\theta)=rg(\theta)$, and $\psi_\rho$ and $h_g$ satisfy \eqref{e:psi_rho} and \eqref{e:h_g}, respectively. 
\end{prop}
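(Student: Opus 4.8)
<br>

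The plan is to prove Proposition~\ref{prop:higher_modes} by an explicit computation of the Weiss-type energy $W_0$ on functions of the separated form $f(r,\theta) = \sum_j a_j(r)\phi_j^S(\theta)$, exploiting the orthonormality of the eigenfunctions $\{\phi_j^S\}$ on the sphere. First I would record the basic identity: for $f(r,\theta)=\sum_j a_j(r)\phi_j^S(\theta)$ one has
\[
W_0(f) = \sum_{j\ge k} \left( \int_0^1 \big( a_j'(r)^2 + \lambda_j^S\, r^{-2} a_j(r)^2\big)\, r^{d-1}\,dr - a_j(1)^2 \right),
\]
so that the problem decouples mode-by-mode and it suffices to prove the inequality for a single eigenfunction $g = c_j\phi_j^S$ with $\lambda_j^S \geq \lambda_k^S > d-1$, i.e. to compare $a_j(r) = r$ (the $1$-homogeneous extension, giving $z_g$) against $a_j(r) = \psi_\rho(r)\, r^{\alpha_j^S}$ (the cut-off harmonic extension), where $\alpha_j^S>1$ is determined by $\alpha_j^S(\alpha_j^S+d-2)=\lambda_j^S$. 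Because $\psi_\rho$ depends only on $r$, the term $\psi_\rho h_g$ is again of separated form.

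The second step is the key energy comparison. For the pure harmonic extension $h_g$ (no cutoff) a direct computation gives $W_0(h_g) = (\alpha_j^S - 1)\, c_j^2$ while $W_0(z_g) = (\lambda_j^S - (d-1))\,c_j^2 \cdot \frac{1}{?}$ — more precisely $W_0(z_g)= \big(\tfrac{\lambda_j^S}{d} + \tfrac{d}{?}\big)$; rather than chase constants here, the structural point is that both quantities are positive multiples of $c_j^2$ and that $\alpha_j^S - 1 < $ (the multiplier for $z_g$) precisely when $\lambda_j^S > d-1$, since $\alpha_j^S$ is a strictly increasing concave-type function of $\lambda_j^S$ that passes through $\alpha=1$ at $\lambda=d-1$. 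Hence there is a fixed gain factor $\eta = \eta(\lambda_k^S - (d-1), d) > 0$ with $W_0(h_g) \leq (1-2\eta) W_0(z_g)$, uniformly over all modes $j\ge k$. The third step is to control the error introduced by the cutoff $\psi_\rho$: writing $\psi_\rho h_g - h_g$, which is supported in $B_{2\rho}$, one estimates $|W_0(\psi_\rho h_g) - W_0(h_g)|$ by a constant times $\rho^{\,2\alpha_j^S - d + \text{(something)}}$; the crucial sign information is that this correction is $O(\rho^{2(\alpha_j^S-1)})$ relative to $W_0(z_g)$ (one can also just bound $\psi_\rho'$ explicitly, since $\psi_\rho$ is the radial harmonic function $\tfrac{r^{2-d}-\rho^{2-d}}{(2\rho)^{2-d}-\rho^{2-d}}$ for $d\ge 3$ and logarithmic for $d=2$), so it tends to $0$ as $\rho\to 0$, uniformly in $j$ using $\alpha_j^S \geq \alpha_k^S > 1$. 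Choosing $\rho_0$ so that this error is $\leq \eta\, W_0(z_g)$ and $\eps_0 = \eta$, summing over $j\ge k$ and using $W_0(z_g)\ge 0$ termwise yields \eqref{e:higher_modes}.

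The main obstacle I anticipate is the uniformity of the cutoff error estimate across all higher modes simultaneously: a naive bound on $W_0(\psi_\rho h_g)-W_0(h_g)$ could degrade as $j\to\infty$ (for instance if it scaled like $\alpha_j^S\rho^{\cdots}$ with an unfavorable power), which would break the summation. The resolution is that $h_g$ decays like $r^{\alpha_j^S}$ near the origin with $\alpha_j^S\to\infty$, so higher modes are in fact \emph{more} concentrated away from $B_{2\rho}$ and the cutoff costs \emph{less}, not more; one has to set up the integrals $\int_0^{2\rho}(\cdots)r^{d-1}dr$ carefully to see that the relative error is dominated by $C\rho^{2(\alpha_k^S-1)}$ for all $j\ge k$, where the constant $C$ and the exponent depend only on $d$ and the gap $\lambda_k^S-(d-1)$. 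A secondary (purely bookkeeping) point is the separate treatment of $d=2$ versus $d\ge 3$ in the formula for $\psi_\rho$, but the conclusion is the same since in all cases $\psi_\rho\to 1$ in the relevant weighted norms.
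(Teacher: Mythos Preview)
Your proposal is correct and follows essentially the same strategy as the paper: decompose mode-by-mode, compare the harmonic extension $r^{\alpha_j}\phi_j$ with the $1$-homogeneous extension $r\phi_j$ to extract a fixed gain (the paper records this as Lemma~\ref{l:harm_ext}, giving $W_0(h_g)\le\big(1-\tfrac{\alpha_k-1}{d+\alpha_k-1}\big)W_0(z_g)$), and then show the cutoff $\psi_\rho$ introduces an error of order $(2\rho)^{2(\alpha_k-1)+d}$ relative to $W_0(h_g)$ (the paper's Lemma~\ref{l:mc_competitor}); both lemmas are quoted from \cite{SpVe}. The only wrinkle in your write-up is the unfinished computation of $W_0(z_g)$: it equals $\tfrac{1}{d}(\lambda_j-(d-1))c_j^2$, and since $\lambda_j-(d-1)=(\alpha_j-1)(\alpha_j+d-1)$ one gets the exact ratio $W_0(h_g)/W_0(z_g)=d/(\alpha_j+d-1)$, which makes the uniform gain over $j\ge k$ immediate.
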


The second proposition deals with the projection of $c$ onto the first eigenfunction and is more difficult. In this case, the energy term no longer dominates the measure term and the construction of the competitor is more complicated.

\begin{prop}[Epiperimetric inequality for the first mode: the internal variation]\label{prop:first_mode}
Let $b\in H^1(B_1, \R_+)$ be a one-homogeneous minimizer of the Alt-Caffarelli functional. There exists $\eps_1=\eps_1(b,d)>0, \gamma = \gamma(b,d) \in [0,1)$ and $\delta_0=\delta_0(b,d)>0$ such that the following holds. If 
\begin{itemize}
\item $\{c > 0\} = S\subset\partial B_1$ is a connected spherical set whose boundary is given as the (spherical) graph of the function $\zeta \in C^{2,\alpha}(\de \Omega)$ satisfying \eqref{e:epi_ass},
\item $c \equiv \kappa\, \phi_1^{S}:\partial B_1\to\R$ (a multiple of the first Dirichlet eigenfunction of $S$), extended by zero on $\partial B_1\setminus S$, where $\kappa \in \mathbb R$ satisfies $0 < \kappa < 2\kappa_0$ (defined in \eqref{e:k_0}),
\end{itemize}
then there exists a function $\bar h\in H^1(B_1,\R_+)$ such that $\bar h=c$ on $\de B_1$ and 
	\begin{equation}\label{e:epi}
	W(\bar h)-W(b)\leq \Big(1-\eps_1\big|W(z)-W(b)\big|^\gamma\Big) \big(W(z)-W(b)\big)
	\end{equation}
	where $z$ is the $1$-homogeneous extension of $c$ to $B_1$. Moreover, for every $r\in(0,1)$, $\bar h(r,\cdot)$ is a positive multiple of the first Dirichlet eigenfunction on the spherical set $S_r:=\{\bar h(r,\cdot)>0\} \subset \partial B_r$. 
Furthermore, if $b$ is integrable through rotations, then we can take $\gamma \equiv 0$ in \eqref{e:epi}. 
\end{prop}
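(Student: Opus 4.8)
# Proof proposal for Proposition \ref{prop:first_mode}

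\textbf{The plan is to} read the competitor $\bar h$ slice-by-slice as $\bar h(r,\theta) = \kappa(t(r))\, r\, \phi_{\zeta(t(r))}(\theta)$, where $t \mapsto (\kappa(t), \zeta(t))$ is a carefully chosen curve in $\mathbb{R} \times C^{2,\alpha}(\partial\Omega)$ with $(\kappa(0), \zeta(0)) = (\kappa, \zeta)$ (the data of the trace $c$) and $t(r) = \delta(1-r)$ for a small parameter $\delta$. First I would record the key disintegration identity: after a change of variables, $W(\bar h) = \int_0^1 \big[ \mathcal{F}\big(\zeta(t(r))\big) + \text{(error term)} \big] r^{d-1}\,dr + W(b)$, where the error term is controlled by $C\big(|\kappa'(t(r))|^2 + \|\dot\zeta(t(r))\|^2_{H^{1/2}}\big)$ using Lemma \ref{l:main}(iii) to bound $\|\delta\phi_\zeta[\dot\zeta]\|_{L^2}$ by $\|\dot\zeta\|_{L^2}$; since $t' = -\delta$, this error is $O(\delta^2)$ uniformly. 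In parallel, for the one-homogeneous extension one has exactly $W(z) - W(b) = \frac{1}{d}\,\mathcal{F}(\zeta)$ (here the modified functional $\mathcal{F}$ must absorb the freedom in $\kappa$; this is where the $\kappa_0^2$ and $m(0)$ terms enter). Thus \eqref{e:epi} reduces to showing $\int_0^1 \big[\mathcal{F}(\zeta(t(r))) - (1-\eps_1|\mathcal{F}(\zeta)|^\gamma)\mathcal{F}(\zeta)\big] r^{d-1}\,dr + O(\delta^2) \le 0$.

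\textbf{Next I would} perform the Lyapunov–Schmidt reduction of Appendix \ref{s:LS}: using the diagonalization of $\delta^2\mathcal{F}(0)$ from Lemma \ref{l:main}(vi), split $H^{1/2}(\partial\Omega) = V_- \oplus V_0 \oplus V_+$ into the (finite-dimensional) negative, kernel, and positive spectral subspaces, and correspondingly decompose the data as $\zeta = \zeta_- + \zeta_0 + \zeta_+$. On $V_-$ the flow is kept constant (these directions only decrease $\mathcal{F}$ at second order, so freezing them costs nothing). On $V_+$ I would define $\zeta_+(t) = (1 - c_d \eps_1 t)\zeta_+$ for a dimensional constant $c_d$, so that $\delta^2\mathcal{F}(0)[\zeta_+(t),\zeta_+(t)] = (1-c_d\eps_1 t)^2\, \delta^2\mathcal{F}(0)[\zeta_+,\zeta_+]$, which — using the spectral gap $\delta^2\mathcal{F}(0)[\zeta_+,\zeta_+] \ge \lambda_{\min}^+ \|\zeta_+\|^2$ and $\mathcal{F}(\zeta_+) \ge 0$ up to higher order — beats $(1-\eps_1)\mathcal{F}(\zeta_+)$ with room to spare, absorbing the $O(\delta^2)$ error once $\delta$ is chosen small relative to $\eps_1$; this also handles the contribution of $\kappa$ if I treat the $\kappa$-variation as just another (scalar, positive-definite direction by $\lambda_2^\Omega > d-1$) component of the positive part. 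On $V_0$, where $\delta^2\mathcal{F}(0)$ vanishes and one must go to higher order, I would run the rescaled gradient flow $\dot\zeta_0(t) = -\nabla f(\zeta_0(t))/|\nabla f(\zeta_0(t))|$ with $f = \mathcal{F}\!\restriction\! \ker(\delta^2\mathcal{F}(0))$, exactly as in the sketch: analyticity from Lemma \ref{l:main}(i) gives the \L ojasiewicz inequality $|f|^{1-\gamma} \lesssim |\nabla f|$, and the computation in the Introduction yields $\mathcal{F}(\zeta_0(t(r))) - (1-\eps_1)\mathcal{F}(\zeta_0) \le \eps_1|\mathcal{F}(\zeta_0)| - 2^{\gamma-1}t(r)|\mathcal{F}(\zeta_0)|^{1-\gamma}$. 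Setting $\delta \sim |\mathcal{F}(\zeta_0)|^{1-\gamma}$ and $\eps_1 \sim |\mathcal{F}(\zeta)|^{1-2\gamma} \sim |W(z)-W(b)|^{1-2\gamma}$ makes the integrated quantity negative. When $b$ is integrable through rotations, $V_0$ consists only of generators of rotations — along which $\mathcal{F}$ is \emph{constant} (rotations are isometries preserving both $\lambda$ and $m$) — so $\zeta_0$ can be projected away at no energy cost and $V_-$ is orthogonal to the $\kappa$-direction by Definition \ref{d:integrability}; then no \L ojasiewicz step is needed and $\gamma = 0$.

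\textbf{I would then} verify the structural claims: that $\bar h = c$ on $\partial B_1$ (immediate since $t(1) = 0$), that $\bar h \ge 0$ and $\bar h(r,\cdot)$ is a positive multiple of the first eigenfunction of $S_r = \{\bar h(r,\cdot) > 0\}$ (by construction, provided $\kappa(t)$ stays in $(0, 2\kappa_0)$ and $\|\zeta(t)\|_{C^{2,\alpha}}$ stays $< \delta$ along the flow — which holds for $\delta$ small because all motion is of size $O(\delta)$), and that $\bar h \in H^1(B_1)$ (the only delicate point is the gluing, but there is none: $t$ is continuous on $[0,1]$ and the slices vary $H^1$-continuously). A point needing care is the lower-order interaction terms between $\zeta_-, \zeta_0, \zeta_+$ and the mismatch between $\mathcal{F}$ and its second-order Taylor polynomial at $0$: here Lemma \ref{l:main}(v) and the smallness $\|\zeta\|_{C^{2,\alpha}} \le C_d\|\zeta\|_{L^2} < \delta$ from \eqref{e:epi_ass} let me absorb all cubic-and-higher remainders into the quadratic estimates at the cost of shrinking $\delta_0$.

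\textbf{The main obstacle} I expect is making the disintegration rigorous and extracting the precise form of the error term $E.T.(r) \sim |t'(r)|^2$ — i.e. showing that the cross-terms coming from differentiating $\kappa(t(r))\, r\, \phi_{\zeta(t(r))}$ in $r$ (which produce $t'(r)$ times $\delta\phi$, paired against the radial derivative $\phi$) either vanish by the eigenvalue equation / stationarity $\partial_\nu b = -1$ on $\partial\Omega$, or are genuinely quadratic in $t'$. This is exactly the place where one uses that $\delta\mathcal{F}(0) = 0$ (Lemma \ref{l:main}(ii)): the would-be linear-in-$t'$ term is precisely $t'(r)\cdot\delta\mathcal{F}(0)[\dot\zeta]$ up to the measure contribution, and it drops. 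Getting the book-keeping of the measure term $|B_1 \cap \{\bar h > 0\}| = \int_0^1 m(\zeta(t(r)))\, r^{d-1}\,dr$ consistent with the $W_0$ part (so that the combination assembles into $\mathcal{F}$ exactly, with the right $\kappa_0^2$ weight) is the fiddly computational heart of the argument, carried out in the subsections following this proposition.
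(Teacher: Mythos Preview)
Your overall architecture matches the paper's: slicing, a flow-based competitor, spectral decomposition of $\delta^2\mathcal F(0)$, linear decay on positive modes, and \L ojasiewicz on the kernel. But there is one genuine gap, and it propagates.

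\medskip

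\textbf{The $\kappa$-direction is not positive; it is a kernel direction.} You write that ``the $\kappa$-variation [is] just another (scalar, positive-definite direction by $\lambda_2^\Omega > d-1$) component of the positive part''. This is wrong: the relevant functional on slices is $\widetilde{\mathcal F}(\kappa,\zeta)=\kappa^2\big(\lambda(\zeta)-(d-1)\big)+m(\zeta)-m(0)$, and since $\lambda(0)=d-1$ one has $\partial_\kappa\widetilde{\mathcal F}(\kappa_0,0)=\partial_\kappa^2\widetilde{\mathcal F}(\kappa_0,0)=0$. The gap $\lambda_2^\Omega-(d-1)$ concerns higher Fourier modes of the \emph{trace}, not the scalar $\kappa$. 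In particular your identity $W(z)-W(b)=\frac1d\mathcal F(\zeta)$ is false when $\kappa\neq\kappa_0$: the correct right-hand side is $\frac1d\big[\kappa^2(\lambda(\zeta)-(d-1))+m(\zeta)-m(0)\big]$, which is why the paper introduces the auxiliary functional $\mathcal G(\zeta,s)=(\kappa_0^2+s^3)(\lambda(\zeta)-(d-1))+m(\zeta)-m(0)$ and sets $\kappa_r^2=\kappa_0^2+s(\eta_3(r))^3$. The consequence is that the $\kappa$-variable must be \emph{flowed together with the kernel directions} via the \L ojasiewicz argument; it cannot be disposed of by linear decay as a positive mode. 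This is exactly what the paper does: the gradient flow \eqref{e:grad_flow} runs on $K\oplus\R\ni(\mu,s)$, and \L ojasiewicz is applied to the analytic function $G(\mu,s)=\mathcal G\big(\mu+\Upsilon(\mu,s),s\big)$ on this $(N{+}1)$-dimensional space.

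\medskip

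\textbf{The Lyapunov--Schmidt reduction is invoked but not actually used.} You mention Appendix \ref{s:LS}, but then run the gradient flow of $f=\mathcal F\!\restriction\!\ker(\delta^2\mathcal F(0))$ with $\zeta_-,\zeta_+$ simply frozen alongside. The problem is the cross-terms you flag at the end: when you move $\zeta_0$ while holding $\zeta_-+\zeta_+$ fixed, the first variation of $\mathcal F$ in the $K^\perp$-directions at $\zeta_-+\zeta_0(t)+\zeta_+$ is not zero, so there is no clean Taylor expansion separating the three pieces. The paper's $\Upsilon$ corrects for exactly this: by \eqref{e:LS2} one has $P_{K^\perp}\big(\delta\mathcal G(\mu+\Upsilon(\mu,s),s)\big)=0$, which is what makes the second-order expansion \eqref{e:S_1}--\eqref{e:S_2} go through with $\hat\zeta\in K^\perp$ and yields the clean split $E^\perp+E^T$ in \eqref{e:exp_h}. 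Without $\Upsilon$ you cannot absorb the mixed terms into $\omega(\|\zeta\|_{C^{2,\alpha}})\|\zeta\|_{H^{1/2}}^2$ as you propose, because the first-order term $\delta\mathcal F(\zeta_0(t))[\hat\zeta]$ does not vanish.

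\medskip

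In the integrable case your description is closer to the paper's (rotations are killed first by Lemma \ref{l:kill_linear}, so $\mu^0=0$), but the role of Definition \ref{d:integrability} is not ``$V_-$ is orthogonal to the $\kappa$-direction'': rather, the orthogonality $\int_{\partial\Omega}\xi=0$ for negative eigenfunctions $\xi$ ensures $\delta\lambda(0)[\zeta_2]=0$, so that the first-order coupling between the $\kappa$-flow and the geometric flow lands entirely on $\zeta_1$ (see \eqref{e:varykappa}), where it can be signed.
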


\subsection{Proof of Theorem \ref{t:epi_smooth}.} \label{sub:proof_of_epi}
Recall that $z$ is the one-homogenous extension of $c \in L^2(\partial B_1)$ into the ball. We write $z = z_1 + z_g$ where, in the notation of \eqref{decomposec}-\eqref{whoisz},
$$
z_1(r,\theta):=c_1r\phi_1(\theta)\qquad\text{and}\qquad z_g(r,\theta):=rg(\theta).
$$ 
Since the eigenfunctions $\{\phi_j\}_{j\ge 1}$ are orthogonal in $L^2(\partial B_1)$ and $H^1(\partial B_1)$, we get that 
$$
W_0(z_1+z_g)=W_0(z_1)+W_0(z_g).
$$
Moreover, since the set $S=\{c>0\}$ is connected, we have that $\{\phi_1>0\}=\{c>0\}$. Thus 
$$
W(z)=W(z_1+z_g)=W(z_1)+W_0(z_g).
$$
We notice that, for every $r\in(0,1)$, the functions $h_1(r,\cdot)$ and $\psi_{\rho_0} h_g(r,\cdot)$ are orthogonal in $L^2(\partial B_1)$, as well as in $H^1(\partial B_1)$. Indeed, $\psi_\rho h_g(r,\cdot)\equiv 0$ if $r\le \rho_0$, while for $r\in [\rho_0,1]$ the claim follows by the Fourier decomposition of $h_g$ (see Subsection \ref{sub:higher_modes}) and the orthogonality of the eigenfunctions (recall that $\rho_0$ is the constant whose existence is guaranteed by Proposition \ref{prop:higher_modes}). Thus, 
$$
W_0(h_1+\psi_{\rho_0} h_g)=W_0(h_1)+W_0(\psi_\rho h_g).
$$
Moreover, we have that 
\begin{equation*}
\begin{array}{ll}
\ds\{h(r,\cdot)>0\}\subset S=\{h_1(r,\cdot)>0\}\quad \text{for every}\quad r\in [\rho_0,1],\\
\\
\qquad\qquad \ds\{h(r,\cdot)>0\}=\{h_1(r,\cdot)>0\}\quad \text{for every}\quad r\in [0,\rho_0].
\end{array}
\end{equation*}
Thus, 
$$
W(h)=W(h_1+\psi_\rho h_g)\le W(h_1)+W_0(\psi_\rho h_g)\,,
$$
and, for every $0<\eps\le\eps_0$ ($\eps_0$ being the constant from Proposition \ref{prop:higher_modes}), we have 
\begin{align}
\big(W(h)-W(b)\big)&-(1-\eps)\big(W(z)-W(b)\big)\nonumber\\
&\le \big(W(h_1)+W_0(\psi_{\rho}h_g)-W(b)\big)-(1-\eps)\big(W(z_1)+W_0(z_g)-W(b)\big)\nonumber\\
&= W(h_1)-W(b)-(1-\eps)\big(W(z_1)-W(b)\big)+W_0(\psi_{\rho}h_g)-(1-\eps)W_0(z_g)\nonumber\\
&\le W(h_1)-W(b)-(1-\eps)\big(W(z_1)-W(b)\big) + (\eps - \eps_0)W_0(z_g),\label{e:high_modes_est}
\end{align}
where the last inequality follows by Proposition \ref{prop:higher_modes}. Note that, since by \eqref{gapforlambda2} the gap $\lambda_2^S -(d-1) > \gamma(d,b)$, the constant $\eps_0 > 0$ depends only on the dimension and the cone $b$. Also note that if $W_0(z_g) > (W(z_1) - W(b))$, then we can let $h_1 = z_1$ and $\eps = \eps_0/2$ and get the epiperimetric inequality. Thus we can assume that $W_0(z_g) \leq \big(W(z_1) - W(b)\big)$, so that
\begin{equation}\label{e:wzcomptow} 
W(z) - W(b) \leq 2\big(W(z_1) - W(b)\big).
\end{equation}
In order to conclude the proof, we notice that 
\begin{align*}
W(h_1)&=\int_{B_1} |\nabla h_1|^2-\int_{\partial B_1}h_1^2+\big|\{h_1>0\}\cap B_1\big|\\
&=\int_{B_1\setminus B_\rho} |\nabla z_1|^2-\int_{\partial B_1}z_1^2+\big|\{z_1>0\}\cap (B_1\setminus B_\rho) \big| +\int_{B_\rho} |\nabla h_1|^2+\big|\{h_1>0\}\cap B_\rho \big|\\
&=\int_\rho^1\left[\int_{\partial B_1} \left(|\nabla_\theta z_1|^2-(d-1)z_1^2+\ind_{\{z_1>0\}}\right)\right]r^{d-1}\,dr\\
&\qquad  +\int_{B_\rho} |\nabla h_1|^2-d\int_0^\rho\left[\int_{\partial B_1} z_1^2\right]r^{d-1}\,dr+\big|\{ h_1>0\}\cap B_\rho \big|\\
&=(1-\rho^d)\int_0^1\left[\int_{\partial B_1} \left(|\nabla_\theta z_1|^2-(d-1)z_1^2+\ind_{\{z_1>0\}}\right)\right]r^{d-1}\,dr\\
&\qquad  +\int_{B_\rho} |\nabla h_1|^2-\rho^d \int_{\partial B_1} z_1^2+\big|\{h_1>0\}\cap B_\rho \big|\\
&=(1-\rho^d)W(z_1)+\rho^d W(h_1,\rho)=(1-\rho^d)W(z_1)+\rho^d W(\bar h).
\end{align*}
Let $\bar\eps>0$. By Proposition \ref{prop:first_mode}, we then have 
\begin{align}
\big(W(h_1)&-W(b)\big)-(1-\bar\eps)\big(W(z_1)-W(b)\big)\notag\\
&=(1-\rho^d)\big(W(z_1)-W(b)\big)+\rho^d \big(W(\bar h)-W(b)\big)-(1-\bar\eps)\big(W(z_1)-W(b)\big)\notag\\
&\le (\bar\eps-\rho^d)\big(W(z_1)-W(b)\big)+\rho^d \Big(1-\eps_1\large|W(z_1) - W(b)\large|^\gamma\Big)\big(W(z_1)-W(b)\big)\notag\\
&= \Big(\bar\eps-\rho^d \eps_1\large|W(z_1)-W(b)\large|^\gamma\Big)\big(W(z_1)-W(b)\big).\label{e:almost_last_est}
\end{align}
Note, all the assumptions of Proposition \ref{prop:first_mode} are obviously satisfied except for the condition that $0 < c_1 < 2\kappa_0$. This follows from the hypothesis in Theorem \ref{t:epi_smooth} that $\|c- b\|_{L^2} < \delta$ and the fact that $b=\kappa_0\,\phi_1$ on $\de B_1$ (see \eqref{e:k_0}). Note that by \eqref{e:wzcomptow} we can replace the term $\eps_1\large|W(z_1)-W(b)\large|^\gamma$ by the smaller $2^{-\gamma} \eps_1\large|W(z) - W(b)\large|^\gamma$. Setting $\rho=\rho_0$ and 
$$\bar\eps=\min\big\{\eps_0,\rho_0^d2^{-\gamma} \eps_1\large|W(z) - W(b)\large|^\gamma \big\},$$
and using the estimates \eqref{e:high_modes_est} and \eqref{e:almost_last_est}, we obtain the epiperimetric inequality \eqref{e:epii} with $\eps=\rho_0^d2^{-\gamma} \eps_1$, 
where $\eps_0$, $\rho_0, \gamma$ and $\eps_1$ are the constants from Proposition \ref{prop:higher_modes} and Proposition \ref{prop:first_mode}, depending only on $b$ and the dimension $d$.\qed\\

We prove Proposition \ref{prop:higher_modes} in Subsection \ref{sub:higher_modes} using a general argument from \cite{SpVe}. The proof of Proposition \ref{prop:first_mode} is more involved and is contained in Subsections \ref{sub:slicing}, \ref{sub:first_mode} and \ref{sub_1_mode}.

\subsection{Homogeneity improvement for the higher modes: proof of Proposition {\ref{prop:higher_modes}}}\label{sub:higher_modes}
In this subsection we prove Proposition \ref{prop:higher_modes}. The proof will be a consequence of the following two Lemmas from \cite{SpVe} of which we recall the statements below. The first lemma shows how the harmonic extension of the high modes has smaller energy than the $1$-homogeneous extension. 

\begin{lemma}[Harmonic extension  {\cite[Lemma 2.5]{SpVe}}]\label{l:harm_ext}
Let $S\subset \partial B_1$ be an open subset of the unit sphere and $g$, $h_g$ and $z_g$ be as in Proposition \ref{prop:higher_modes}. Then $W_0(z_g)\geq 0$ and
\begin{equation}\label{e:higher_improvement}
W_0(h_g)-(1-\eps)W_0(z_g)\le 0\, \quad\text{for every}\quad 0<\eps\le\frac{\alpha_k-1}{d+\alpha_k-1}\,,
\end{equation}
where $k$ is as in Proposition \ref{prop:higher_modes} and $\alpha_k=\alpha_k^S$ is the exponent from Subsection \ref{sub:prel:eigen}.
\end{lemma}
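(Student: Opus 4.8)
The plan is to diagonalize both $W_0(z_g)$ and $W_0(h_g)$ in the Dirichlet eigenbasis of $S$, reducing \eqref{e:higher_improvement} to a scalar inequality on each Fourier mode. Write $g=\sum_{j\ge k}c_j\phi_j^S$; since $\alpha\mapsto\alpha(\alpha+d-2)$ is increasing and takes the value $d-1$ at $\alpha=1$, the hypothesis $\lambda_j^S\ge\lambda_k^S>d-1$ gives $\alpha_j^S>1$ for all $j\ge k$. The eigenfunctions $\{\phi_j^S\}$ are orthonormal in $L^2(\partial B_1)$ and orthogonal in $H^1(\partial B_1)$ with $\int_{\partial B_1}|\nabla_\theta\phi_j^S|^2=\lambda_j^S$; hence for a function $u(r,\theta)=\sum_j a_j(r)\phi_j^S(\theta)$, dis-integrating in polar coordinates yields
$$W_0(u)=\int_0^1\Big(\sum_{j\ge k} a_j'(r)^2+r^{-2}\lambda_j^S\, a_j(r)^2\Big)r^{d-1}\,dr-\sum_{j\ge k} a_j(1)^2,$$
the interchange of summation and integration being justified by truncating to finitely many modes and using $z_g,h_g\in H^1(B_1)$.

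First I would apply this with $a_j(r)=c_jr$ to obtain $W_0(z_g)=\tfrac1d\sum_{j\ge k} c_j^2\big(\lambda_j^S-(d-1)\big)\ge 0$, the sign being immediate from $\lambda_j^S\ge\lambda_k^S>d-1$; this is the first assertion. Next, applying the formula with $a_j(r)=c_jr^{\alpha_j^S}$ and using $\int_0^1 r^{2\alpha_j^S+d-3}\,dr=(2\alpha_j^S+d-2)^{-1}$ together with the identity $(\alpha_j^S)^2+\lambda_j^S=\alpha_j^S(2\alpha_j^S+d-2)$ (which follows from $\lambda_j^S=\alpha_j^S(\alpha_j^S+d-2)$), the radial integrals collapse and I would get the clean formula $W_0(h_g)=\sum_{j\ge k} c_j^2(\alpha_j^S-1)$.

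Finally I would compare the two via the factorization $\lambda_j^S-(d-1)=(\alpha_j^S-1)(\alpha_j^S+d-1)$, which gives
$$W_0(h_g)-(1-\eps)W_0(z_g)=\sum_{j\ge k} c_j^2\,(\alpha_j^S-1)\left(1-(1-\eps)\frac{\alpha_j^S+d-1}{d}\right).$$
Since $\alpha_j^S-1>0$, it suffices that each bracket be $\le 0$, i.e. $\eps\le\frac{\alpha_j^S-1}{\alpha_j^S+d-1}$; because $t\mapsto\frac{t-1}{t+d-1}$ is increasing on $(1,\infty)$ and $\alpha_j^S\ge\alpha_k^S$, this holds for every $j\ge k$ as soon as $0<\eps\le\frac{\alpha_k^S-1}{d+\alpha_k^S-1}$, which is exactly the asserted range; summing over $j$ then yields \eqref{e:higher_improvement}. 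There is essentially no conceptual obstacle here: the statement is an identity between two quadratic forms simultaneously diagonalized by $\{\phi_j^S\}$. The only point requiring a little care is the termwise bookkeeping — absolute convergence of the series for $W_0(z_g)$ and $W_0(h_g)$ and the interchange with the radial integral — which is handled by the truncation argument above, together with the remark that distinct eigenfunctions are orthogonal in both $L^2(\partial B_1)$ and $H^1(\partial B_1)$, so no cross terms survive.
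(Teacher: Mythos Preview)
Your proof is correct and is precisely the standard diagonalization argument: both $W_0(z_g)$ and $W_0(h_g)$ split cleanly over the eigenbasis, and the inequality reduces to the monotone scalar bound $\eps\le(\alpha_j-1)/(\alpha_j+d-1)$, whose minimum over $j\ge k$ is attained at $j=k$. The paper does not give its own proof of this lemma but simply quotes it from \cite{SpVe}; your computation is exactly the natural one and matches what that reference does.
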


\noindent The second lemma shows that properly cutting-off the harmonic competitor near the origin preserves the energy improvement. This is needed to preserve the orthogonality between $h_g$ and $h_1$.

\begin{lemma}[Truncation of the harmonic extension  {\cite[Lemma 2.6]{SpVe}}]\label{l:mc_competitor}
Let $S\subset \partial B_1$ be an open subset of the unit sphere and $g$, $h_g$ and $z_g$ be as in Proposition \ref{prop:higher_modes}. Let $\rho>0$ and $\psi:=\psi_{\rho}$ be given by \eqref{e:psi_rho}. Then, there is a dimensional constant $C_d>0$ such that 
\begin{equation}\label{e:gigi}
W_0(\psi_\rho h_g)\le\left(1+\frac{C_d\,\alpha_k\,(2\rho)^{2(\alpha_k-1)+d}}{\alpha_k-1}\right)W_0(h_g),
\end{equation}
where $\alpha_k=\alpha_k^S$ is the homogeneity exponent corresponding to the eigenvalue $\lambda_k^S$ (see Subsection \ref{sub:prel:eigen}).
\end{lemma}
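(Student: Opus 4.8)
The plan is to estimate $W_0(\psi_\rho h_g)-W_0(h_g)$ by comparing the two Dirichlet energies mode by mode inside the small ball $B_{2\rho}$. Since $\psi_\rho\equiv 1$ on $B_1\setminus B_{2\rho}$, there $\psi_\rho h_g=h_g$ together with all derivatives, and since $\psi_\rho=1$ on $\partial B_1$ the boundary terms $\int_{\partial B_1}(\psi_\rho h_g)^2\,d\HH^{d-1}=\int_{\partial B_1}h_g^2\,d\HH^{d-1}$ agree; hence
\[
W_0(\psi_\rho h_g)-W_0(h_g)=\int_{B_{2\rho}}|\nabla(\psi_\rho h_g)|^2\,dx-\int_{B_{2\rho}}|\nabla h_g|^2\,dx.
\]
Writing the harmonic extension in polar coordinates as $h_g(r,\theta)=\sum_{j\ge k}c_j\,r^{\alpha_j}\phi_j(\theta)$, where $c_j$ are the Fourier coefficients of $g$ over $S$ and $\alpha_j=\alpha_j^S$, $\phi_j=\phi_j^S$ as in Subsection \ref{sub:prel:eigen}, and using that $\{\phi_j\}_j$ is orthonormal in $L^2(\partial B_1)$ while $\{\nabla_\theta\phi_j\}_j$ is orthogonal in $L^2(\partial B_1)$ (because $\int_{\partial B_1}\nabla_\theta\phi_i\cdot\nabla_\theta\phi_j\,d\HH^{d-1}=\lambda_j\delta_{ij}$), both energies on $B_{2\rho}$ diagonalize: $\psi_\rho$ depends only on $r$, so radial and tangential components never couple across distinct modes. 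Thus it suffices to work one mode at a time.

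For $f_j(r,\theta):=\psi_\rho(r)\,r^{\alpha_j}\phi_j(\theta)$, a direct computation in polar coordinates gives
\[
\int_{B_{2\rho}}|\nabla f_j|^2=\int_0^{2\rho}\big(\psi_\rho'(r)\,r^{\alpha_j}+\alpha_j\,\psi_\rho(r)\,r^{\alpha_j-1}\big)^2 r^{d-1}\,dr+\lambda_j\int_0^{2\rho}\psi_\rho(r)^2\,r^{2\alpha_j+d-3}\,dr,
\]
while for the homogeneous mode, using $\lambda_j=\alpha_j(\alpha_j+d-2)$,
\[
\int_{B_{2\rho}}|\nabla(r^{\alpha_j}\phi_j)|^2=(\alpha_j^2+\lambda_j)\,\frac{(2\rho)^{2\alpha_j+d-2}}{2\alpha_j+d-2}=\alpha_j\,(2\rho)^{2\alpha_j+d-2}.
\]
Now $\psi_\rho$ is the radial harmonic function on the annulus with $\psi_\rho(\rho)=0$, $\psi_\rho(2\rho)=1$, so $\psi_\rho(r)=\frac{1-(r/\rho)^{2-d}}{1-2^{2-d}}$ on $[\rho,2\rho]$; in particular $0\le\psi_\rho\le 1$ on $[0,2\rho]$, and $\psi_\rho'$ vanishes on $[0,\rho]$ and satisfies $|\psi_\rho'(r)|\le c_d/\rho$ on $[\rho,2\rho]$ with $c_d$ dimensional. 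Plugging these into the first identity, using $2ab\le a^2+b^2$, $\psi_\rho\le 1$, and $2\alpha_j+d-2\ge\max\{2\alpha_j,\ \alpha_j+d-2\}$, each resulting integral is bounded by a dimensional multiple of $\alpha_j(2\rho)^{2\alpha_j+d-2}$, so that
\[
\int_{B_{2\rho}}|\nabla f_j|^2-\int_{B_{2\rho}}|\nabla(r^{\alpha_j}\phi_j)|^2\le C_d\,\alpha_j\,(2\rho)^{2\alpha_j+d-2}
\]
for a dimensional constant $C_d$.

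Finally I would sum over $j\ge k$. Because $2\rho<1$ and $\alpha_j\ge\alpha_k$ we have $(2\rho)^{2\alpha_j+d-2}\le(2\rho)^{2\alpha_k+d-2}=(2\rho)^{2(\alpha_k-1)+d}$; and the elementary inequality $\alpha_j\le\frac{\alpha_k}{\alpha_k-1}(\alpha_j-1)$, valid for every $j\ge k$ since $\alpha_j\ge\alpha_k>1$ (the strict inequality $\alpha_k>1$ being equivalent to the hypothesis $\lambda_k^S>d-1$), turns the factor $\alpha_j$ into $\frac{\alpha_k}{\alpha_k-1}(\alpha_j-1)$. Together with the identity $W_0(h_g)=\sum_{j\ge k}c_j^2(\alpha_j-1)$ (which also shows $W_0(h_g)\ge 0$), summing the mode-wise bound yields
\[
W_0(\psi_\rho h_g)-W_0(h_g)\le\frac{C_d\,\alpha_k\,(2\rho)^{2(\alpha_k-1)+d}}{\alpha_k-1}\sum_{j\ge k}c_j^2(\alpha_j-1)=\frac{C_d\,\alpha_k\,(2\rho)^{2(\alpha_k-1)+d}}{\alpha_k-1}\,W_0(h_g),
\]
which is exactly \eqref{e:gigi}.

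The computation itself is entirely elementary; the only real obstacle is the bookkeeping that keeps $C_d$ independent of $k$, $\rho$, and $j$. This is precisely why the algebraic step $\alpha_j\le\frac{\alpha_k}{\alpha_k-1}(\alpha_j-1)$ and the monotonicity $(2\rho)^{2\alpha_j+d-2}\le(2\rho)^{2\alpha_k+d-2}$ are the crux of the argument, and why one must track exponents so that the final power is exactly $2(\alpha_k-1)+d$. A minor point is the explicit form of $\psi_\rho$ and the bound on $\psi_\rho'$, which uses $d\ge 3$; since $d\ge 7$ in the paper this is harmless. (Alternatively, as the statement indicates, one may simply quote \cite[Lemma 2.6]{SpVe}, whose proof is the computation above.)
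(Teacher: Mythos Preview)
Your argument is correct. The paper does not give its own proof of this lemma but simply quotes \cite[Lemma~2.6]{SpVe}; your mode-by-mode computation, together with the two key bookkeeping steps $(2\rho)^{2\alpha_j+d-2}\le(2\rho)^{2\alpha_k+d-2}$ and $\alpha_j\le\frac{\alpha_k}{\alpha_k-1}(\alpha_j-1)$, is precisely the argument behind that reference, and you have even noted this yourself at the end.
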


\noindent {\bf Proof of Proposition \ref{prop:higher_modes}.} We only consider the case $W_0(z_g)>0$ since otherwise the statement is trivial. Thus, it is sufficient to prove \eqref{e:higher_modes} for $\eps=\eps_0$.
By Lemma \ref{l:harm_ext} and Lemma \ref{l:mc_competitor} we have 
\begin{align*}
W_0(\psi_\rho h_g)&\le \left(1+\frac{C_d\,\alpha_k\,(2\rho)^{2(\alpha_k-1)+d}}{\alpha_k-1}\right)W_0(h_g)\\
&\le\left(1+\frac{C_d\,\alpha_k\,(2\rho_0)^{2(\alpha_k-1)+d}}{\alpha_k-1}\right)\left(1-\frac{\alpha_k-1}{d+\alpha_k-1}\right)W_0(z_g)\le(1-\eps_0)W_0(z_g),
\end{align*}
for some constants $\eps_0>0$ and $\rho_0>0$ depending only on the dimension and the gap, $\alpha_k-1>0$.\qed

\subsection{Internal variation and slicing}\label{sub:slicing}
This subsection contains a preliminary result for the proof of Proposition \ref{prop:first_mode}. We treat it separately since it offers a new perspective on the epiperimetric inequality. 
Suppose that $u:B_1\to \R^+$ is a minimizer of the Alt-Caffarelli functional $\mathcal E$ in $B_1$ such that $0\in\partial\{u>0\}$ and $b$ is a blow-up of $u$ at $0$. By definition, $u$ is the best choice for a test function in the left-hand side in the epiperimetric inequality 
$$
W(u)-W(b)\le (1-\eps)\big(W(z)-W(b)\big).
$$
The Lipschitz continuity of $u$ implies that using the coordinates $(r,\theta)\in \R^+\times \mathbb{S}^{d-1}$, the function $u$ can be written in the form $u(r,\theta)=rv_r(\theta)$, where  $v_r\in L^2(\partial B_1)$ for each $r \in (0, 1]$. Thus any minimizer, $u$, can be identified with the flow $r\mapsto v_r\in L^2(\partial B_1)$ and so it is not restrictive to search for a competitor $h$, written directly in the form $h(r,\theta)=rv_r(\theta)$. 
 
We introduce the following functional, which will be helpful in our analysis. For every function $\tilde{c}\in H^1(\partial B_1)$ on the $(d-1)$-dimensional sphere $\partial B_1\subset\R^d$, we define 
\begin{equation}\label{eqn:defofEandE0}
\begin{aligned}
 E(\tilde{c}):=&\int_{\partial B_1} \Big(|\nabla_{\theta} \tilde{c}|^2-(d-1)\tilde{c}^2\Big)\,d\HH^{d-1}+\HH^{d-1}\big(\{\tilde{c}>0\}\cap\partial B_1\big),\\
 E_0(\tilde{c}):=& \int_{\partial B_1}  \Big(|\nabla_{\theta} \tilde{c}|^2-(d-1)\tilde{c}^2\Big)\,d\HH^{d-1}.\end{aligned}
\end{equation}

\begin{oss}
We notice that for the one-homogeneous function $z_{\tilde{c}}(r,\theta)=r\,\tilde{c}(\theta)$ we have 
$$W(z_{\tilde{c}})=\int_0^1\left[\int_{\partial B_1} \Big(|\nabla_\theta \tilde{c}|^2-(d-1)\tilde{c}^2+\ind_{\{\tilde{c}>0\}}\Big)\,d\HH^{d-1}(\theta)\right]\,r^{d-1}\,dr=\frac{1}{d}E(\tilde{c}).$$
In particular, $E(\tilde{c})=d\, W(b)$ if $\tilde{c}$ is the trace of the one-homogeneous solution $b$ on the cone $\{b>0\}\cap \de B_1$. Moreover, if $\tilde{c}$ is the first eigenfunction, i.e. $\tilde{c}=\phi_1^{\Omega_\zeta}$, for some domain $\Omega_\zeta\subset \de B_1$ whose boundary $\de \Omega_\zeta$ is the graph of $\zeta$ over $\de \Omega$, then in the notation of Subsection \ref{ss:var} we have
$$E(\tilde{c})=\lambda(\zeta)-(d-1)+m(\zeta)=\mathcal F(\zeta)+m(0).$$ 
\end{oss}
\noindent The following lemma relates the energy, $W$, of a function, $u$, to the energy, $E$, of its slices, $\phi_r$.  

\begin{lemma}[Slicing Lemma]\label{l:slicing}
Let $v:\R^+\times\partial B_1\ni (t,\theta)\mapsto v_t(\theta)$ be such that $v(t,\theta) \in H^1([0,1]\times \partial B_1)$, $v_t\in H^1(\partial B_1)$ for every $t\in \R^+$ and $t\mapsto v_t$ is Lipschitz continuous as a function with values in $L^2(\partial B_1)$. Let $\eta:[0,1]\to\R^+$ be Lipschitz continuous. We set 
$$w(\rho,\theta)=\rho\,v_{\eta(\rho)}(\theta)\qquad\text{and}\qquad z_v(\rho,\theta)=\rho\,v_{\eta(1)}(\theta).$$ 
Then, for every $\eps\ge 0$, we have  
\begin{equation}\label{eqn:reductiontofandg}\begin{aligned}
W(w)-(1-\eps)W(z_v)
&=\int_0^1\Big(E(v_{\eta(r)})-(1-\eps)E(v_{\eta(1)})\Big)r^{d-1}\,dr\\
&\qquad\qquad\qquad+\int_0^1r^{d+1} (\eta'(r))^2 \int_{\partial B_1} |\partial_t v_{\eta(r)}|^2 \,dr.
\end{aligned}
\end{equation}
Moreover, the same inequality holds with $W_0$ and $E_0$ (defined as in  in place of $W$ and $E$.
\end{lemma}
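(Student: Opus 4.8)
The plan is to prove the Slicing Lemma by a direct computation in polar coordinates, disintegrating the Dirichlet energy, the boundary term, and the measure term of $W(w)$ over the spheres $\partial B_r$, and carefully tracking the chain-rule contribution coming from the reparametrization $r\mapsto \eta(r)$. First I would fix the notation: write $w(r,\theta)=r\,v_{\eta(r)}(\theta)$, so that $\nabla w$ splits into a radial part $\partial_r w = v_{\eta(r)}(\theta) + r\,\eta'(r)\,(\partial_t v)_{\eta(r)}(\theta)$ and a spherical part $\tfrac1r \nabla_\theta w = \nabla_\theta v_{\eta(r)}(\theta)$. Squaring and integrating $dx = r^{d-1}\,dr\,d\HH^{d-1}(\theta)$, the Dirichlet term becomes
\begin{equation*}
\int_{B_1}|\nabla w|^2\,dx = \int_0^1 r^{d-1}\!\int_{\partial B_1}\!\Big(|v_{\eta(r)}|^2 + 2r\eta'(r)\,v_{\eta(r)}(\partial_t v)_{\eta(r)} + r^2\eta'(r)^2|(\partial_t v)_{\eta(r)}|^2 + |\nabla_\theta v_{\eta(r)}|^2\Big)\,d\HH^{d-1}\,dr.
\end{equation*}
The key observation is that the cross term $2r\eta'(r)\int_{\partial B_1} v_{\eta(r)}(\partial_t v)_{\eta(r)}\,d\HH^{d-1} = r\,\eta'(r)\,\tfrac{d}{dt}\big|_{t=\eta(r)}\!\int_{\partial B_1}|v_t|^2\,d\HH^{d-1} = r\,\tfrac{d}{dr}\big(\int_{\partial B_1}|v_{\eta(r)}|^2\big)$, so that $r^{d-1}\cdot r\eta'(r)\cdot 2\langle v,\partial_t v\rangle = r^{d}\,\tfrac{d}{dr}\|v_{\eta(r)}\|_{L^2(\partial B_1)}^2$; integrating this by parts in $r$ against $r^{d}$ and combining with the boundary term $-\int_{\partial B_1}w^2 = -\int_{\partial B_1}|v_{\eta(1)}|^2$ and with the $r^{d-1}\int |v_{\eta(r)}|^2$ piece is what produces the clean coefficient $-(d-1)$ in front of $\tilde c^2$ inside $E$. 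Concretely, $\int_0^1 r^d\,\tfrac{d}{dr}\|v_{\eta(r)}\|^2\,dr = \|v_{\eta(1)}\|^2 - d\int_0^1 r^{d-1}\|v_{\eta(r)}\|^2\,dr$, and the $\|v_{\eta(1)}\|^2$ cancels the boundary term $\int_{\partial B_1}w^2$, leaving exactly $\int_0^1 r^{d-1}\big(|\nabla_\theta v_{\eta(r)}|^2 - (d-1)|v_{\eta(r)}|^2\big)$ plus the quadratic remainder $\int_0^1 r^{d+1}\eta'(r)^2\int_{\partial B_1}|(\partial_t v)_{\eta(r)}|^2$.

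Next I would handle the measure term: $|\{w>0\}\cap B_1| = \int_0^1 r^{d-1}\HH^{d-1}(\{w(r,\cdot)>0\})\,dr = \int_0^1 r^{d-1}\HH^{d-1}(\{v_{\eta(r)}>0\})\,dr$, since for $r>0$ the sign of $w(r,\theta)=r\,v_{\eta(r)}(\theta)$ agrees with the sign of $v_{\eta(r)}(\theta)$. This is precisely $\int_0^1 r^{d-1}\,[\,E(v_{\eta(r)}) - E_0(v_{\eta(r)})\,]\,dr$. Adding this to the disintegration of $W_0(w)$ already obtained gives $W(w) = \int_0^1 r^{d-1}E(v_{\eta(r)})\,dr + \int_0^1 r^{d+1}\eta'(r)^2\|(\partial_t v)_{\eta(r)}\|_{L^2}^2\,dr$. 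Applying the same identity to the one-homogeneous function $z_v(r,\theta)=r\,v_{\eta(1)}(\theta)$ — which corresponds to the constant reparametrization $\eta\equiv\eta(1)$, so the remainder vanishes — yields $W(z_v) = \int_0^1 r^{d-1}E(v_{\eta(1)})\,dr = \tfrac1d E(v_{\eta(1)})$, consistent with the Remark preceding the lemma. Forming $W(w)-(1-\eps)W(z_v)$ and writing $(1-\eps)W(z_v) = (1-\eps)\int_0^1 r^{d-1}E(v_{\eta(1)})\,dr$ gives \eqref{eqn:reductiontofandg} directly. The statement for $W_0$ and $E_0$ is the same computation with the measure term simply omitted throughout.

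The main technical obstacle is justifying the manipulations rigorously under the stated low regularity: $v\in H^1([0,1]\times\partial B_1)$, each $v_t\in H^1(\partial B_1)$, and $t\mapsto v_t$ only Lipschitz into $L^2(\partial B_1)$, with $\eta$ merely Lipschitz. In particular one must make sense of $(\partial_t v)_{\eta(r)}$ along the (a priori non-smooth) curve $t=\eta(r)$, verify the chain rule $\tfrac{d}{dr}v_{\eta(r)} = \eta'(r)(\partial_t v)_{\eta(r)}$ for a.e. $r$ as an identity in $L^2(\partial B_1)$, and justify the integration by parts $\int_0^1 r^d\,\tfrac{d}{dr}\|v_{\eta(r)}\|_{L^2}^2\,dr = \|v_{\eta(1)}\|^2 - d\int_0^1 r^{d-1}\|v_{\eta(r)}\|^2\,dr$ when $r\mapsto\|v_{\eta(r)}\|^2$ is only absolutely continuous (which follows from Lipschitz continuity of $r\mapsto v_{\eta(r)}$ into $L^2$). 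I would do this by first proving the identity for smooth $v$ and $\eta$, where every step is classical, and then passing to the limit via a density/approximation argument: mollify $v$ in both variables and approximate $\eta$ by smooth Lipschitz functions with uniformly bounded Lipschitz constants, noting that all terms in \eqref{eqn:reductiontofandg} are continuous under $H^1$-convergence of $w$ and $z_v$ on compacta together with $L^2$-convergence of the relevant slice quantities — the measure term requiring the mild extra care that $\{w>0\}$ is stable under these approximations, which one gets from the structure $w = r\,v_{\eta(r)}$ and Fatou/dominated convergence on the level sets, exactly as is routinely done for this functional in \cite{SpVe}.
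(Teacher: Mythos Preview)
Your proposal is correct and follows essentially the same approach as the paper: a direct polar-coordinate computation of $W(w)$, recognizing the cross term $2r\eta'(r)\langle v,\partial_t v\rangle$ as $r\,\tfrac{d}{dr}\|v_{\eta(r)}\|_{L^2}^2$, and integrating by parts in $r$ to produce the $-(d-1)$ coefficient and cancel the boundary term. The paper carries out exactly this calculation (in slightly more compressed form) and does not address the regularity justification you outline; your added approximation argument is a reasonable way to make the computation rigorous under the stated hypotheses, but is not present in the paper's proof.
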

\begin{proof}
We first calculate the energy of $w$.
\begin{align*}
W_0(w)&=\int_{B_1}|\nabla w|^2\,dx-\int_{\partial B_1}w^2\ d\theta\\
&=\int_0^1r^{d-1}\,\int_{\partial B_1} \Big(\big(v_{\eta(r)}+r\eta'(r)\de_tv_{\eta(r)}\big)^2+\frac1{r^2}(r|\nabla_\theta v_{\eta(r)}|)^2\Big)\,d\theta\ dr-\int_{\partial B_1}v_{\eta(1)}^2\,d\theta\\
&=\int_0^1r^{d-1}\,\int_{\partial B_1} \Big(v^2_{\eta(r)}+r\partial_r\left(v^2_{\eta(r)}\right)+r^2(\eta'(r))^2|\de_t{v}_{\eta(r)}|^2+|\nabla_\theta v_{\eta(r)}|^2\Big)\,d\theta\ dr-\int_{\partial B_1}v_{\eta(1)}^2\,d\theta\\
&=\int_0^1r^{d-1}\,\,\int_{\partial B_1} \Big( (\eta'(r))^2r^2|\de_t{v}_{\eta(r)}|^2+|\nabla_\theta v_{\eta(r)}|^2-(d-1)v_{\eta(r)}^2\Big)\,d\theta\ dr,
\end{align*}
where in the last step we integrated by parts in $r$.  Thus, we get 
\begin{align*}
W(w)&=W_0(w)+\big|B_1\cap\{w>0\}\big|\\
&=\int_0^1\left[\int_{\partial B_1} \Big((\eta'(r))^2r^2|\de_t{v}_{\eta(r)}|^2+|\nabla_\theta v_{\eta(r)}|^2-(d-1)v_{\eta(r)}^2\Big)\,d\theta\right.\\
&\qquad\qquad\qquad\qquad\qquad\qquad\qquad\qquad+\left.\mathcal H^{d-1}\big(\partial B_1\cap \{v_{\eta(r)}>0\}\big)\right]r^{d-1}\,dr.
\end{align*}
Analogously, the energy of the one-homogeneous extension $z$ is given by
$$W(z_v)=\int_0^1\left[\int_{\partial B_1} \Big(|\nabla_\theta v_{\eta(1)}|^2-(d-1)v_{\eta(1)}^2\Big)\,d\theta+\mathcal H^{d-1}\big(\partial B_1\cap \{v_{\eta(1)}>0\}\big)\right]r^{d-1}\,dr.\qquad\qedhere$$\end{proof}
The identity \eqref{eqn:reductiontofandg} suggests that in order to obtain the epiperimetric inequality \eqref{e:epi} we have to construct a flow, $t\mapsto v_t$, that decreases the energy $\frac1{d}E(v_t)-W(b)$. To do this we will look at the spectrum of the bilinear form that corresponds to the second variation of $\mathcal F$. On the other hand, changing the initial trace $c=v_{\eta(1)}$ has a cost due to the last term in \eqref{eqn:reductiontofandg}. The balance between this last term and the energy $E$ will be the main subject of the following subsection.

\subsection{Proof of Proposition \ref{prop:first_mode}}\label{sub_1_mode} 
Let $K:=\ker (\delta^2\mathcal F(0))\subset H^{\sfrac12}(\de \Omega)$, $N:=\dim K$ and $p_K, p_{K^\perp}$ be the $L^2(\de \Omega)$-projections on $K$ and $K^\perp$, respectively. Let $\zeta \in {C^{2,\alpha}(\de \Omega)}$ be given (as in \eqref{e:epi_ass}) and 
$$
\zeta=\zeta^T+\zeta^\perp=\sum_{j=1}^N\mu_j\iota_j+\zeta^\perp\,,\ \mbox{ where }\ \zeta^\perp=p_{K^\perp}\zeta\,\,\mbox{ and $\{\iota_1,\dots,\iota_N\}$ is an orthonormal basis of $K$.  }
$$ 
Moreover, slightly abusing the notation, we will write $\mu^0:=\sum_{j=1}^N\mu_j\iota_j \in K $, and also $\mu^0=(\mu_1,\dots,\mu_N)\in \R^N$, so that $|\mu^0|=\|\zeta^T\|_{H^{\sfrac12}(\de \Omega)}$. 

\noindent Let $\Upsilon \colon K\times\R \to K^\perp$ be the map of Lemma \ref{l:LS} and let $\tilde\zeta:=\zeta^\perp-\Upsilon(\mu^0,s^0) \in K^\perp$. Then we can decompose $\tilde \zeta=\zeta_1+\zeta_2$, where $\zeta_1,\zeta_2$ are defined by
$$
\zeta_1:=\sum_{\lambda_i>0}p_i \xi_i
\qquad \mbox{and}\qquad
\zeta_2:=\sum_{\lambda_i<0}p_i\,\xi_i\,,
$$ 
where $(\xi_i)_i$ is the orthonormal basis of $K^\perp$ given by Lemma \ref{l:main} (so $(\xi_i)_{i\in \N}\cup (\iota_j)_{j=1}^N$ is an orthonormal basis of $H^{\sfrac12}(\de \Omega)$) and $\lambda_i$ is the eigenvalue of $\delta^2\mathcal F(0)$ relative to $\xi_i$. 

\noindent Finally recall the function $\mathcal G: C^{2,\alpha}(\partial \Omega)\oplus \R \rightarrow \R$ defined in \eqref{e:defofG} 
$$\mathcal G(\zeta,s) = (\kappa_0^2 + s^3)(\lambda(\zeta) - (d-1)) + m(\zeta)-m(0).$$

\noindent{\bf Definition of the competitor.} We will define the competitor $\bar h(r,\theta)$ in such a way that, for each $r > 0$, the positivity set $\{\overline{h}(r, \cdot) > 0\} \subset \partial B_1$ will be given by the spherical set $\Omega_{g_r}$, whose boundary is the graph of $g_r:\partial\Omega\to\R$, where
\begin{equation}\label{e:competitor}
g_r(\theta)=g(r,\theta):=\eta_1(r) \zeta_1(\theta)+\eta_2(r)\,\zeta_2(\theta)+\Bigl(\mu(\eta_3(r))+\Upsilon\big(\mu(\eta_3(r)), s(\eta_3(r))\big)\Bigr)\,,
\end{equation}
where the function $\R\ni t\mapsto (\mu(t), s(t))\in\R^N\oplus \R$ is defined through the gradient flow
\begin{equation}\label{e:grad_flow}
\begin{cases}
\ds(\mu'(t),s'(t))=\frac{-\nabla G(\mu(t), s(t))}{|\nabla G(\mu(t), s(t))|} \\
(\mu(0), s(0))=(\mu^0, s^0),
\end{cases}
\end{equation}
 the function $G\colon \R^N \oplus \R\to \R$ is the analytic function of Lemma \ref{l:LS} defined as $G(\mu,s):=\mathcal G(\mu+\Upsilon(\mu,s),s)$ and $s^0$ will be defined below.  If $|\nabla G(\mu, s)| = 0$, then we set $(\mu', s') = 0$. The functions, $\eta_1,\eta_2,\eta_3$, will be chosen later, with the properties that $\eta_1(1)=1=\eta_2(1)$ and $\eta_3(1)=0$, so that
$$
g(1,\cdot)=\zeta_1+\zeta_2+\Bigl(\mu^0+\Upsilon(\mu^0, s^0) \Bigr)=\zeta^\perp-\Upsilon(\mu^0, s^0)+\Bigl(\zeta^T+\Upsilon(\mu^0, s^0) \Bigr)=\zeta^\perp+\zeta^T=\zeta\,.
$$ 
For every $r\in (0,1)$, we denote the first Dirichlet eigenfunction, the first eigenvalue and the measure of $\Omega_{g_r}$ by
$$
\phi_{g_r}:=\phi_1^{\Omega_{g_r}}\ ,
\qquad \lambda(g_r):=\lambda_1^{\Omega_{g_r}}
\qquad\mbox{and}\qquad
 m(g_r):=\HH^{d-1}(\Omega_{g_r})\,.
$$

Recall that the one-homogeneous extension $z$ of the trace $c=\kappa \,\phi_\zeta=\kappa \,\phi_{g_1}$ is given by $z(r, \theta) = r \kappa\phi_{g_1}(\theta)$. We define the competitor $\bar h$ in polar coordinates as 
\begin{equation}\label{eqn:conicalcompetitor}
\bar h(r, \theta) = r \kappa_{r}\phi_{g_r}(\theta),
\end{equation}
where $\kappa_r^2 = \kappa_0^2 + s^3(\eta_3(r))$, and $s(t)$ is given by \eqref{e:grad_flow}, where $s(0)=s^0\in \R$ is chosen such that $\kappa_0^2 + (s^0)^3 = \kappa^2$. 
By the Slicing Lemma \ref{l:slicing}, we have (in the notation of \eqref{e:defofG})
\begin{equation}\label{e:useslicing}
\begin{aligned}
\Big|\Big(W&\big(\bar h(r,\theta)\big) - (1-\eps)W(z)+\eps W(b)\Big) - \int_0^1 \Big[\mathcal G\big(g_r, s(\eta_3(r))\big) - (1-\eps)\mathcal G(\zeta, s^0)\Big]r^{d-1}\, dr\Big|\\
&\le 2\int_0^1 r^{d+1} \frac{9s(\eta_3(r))^4}{\kappa_r^2} [s'(\eta_3(r))]^2 [\eta'_3(r)]^2\ dr + 2\int_0^1 \kappa_r^2 r^{d+1} \int_{\partial B_1} \big|\delta\phi_{g_r}[g'(r)]\big|^2 \,d\sigma \,dr.
\end{aligned}
\end{equation}
Setting  for simplicity $s=s(\eta_3(r))$, $\mu=\mu(\eta_3(r))$ and $g=g(r)$, we can write
\begin{align}\label{e:exp_h}
\mathcal G(g, s)-(1-\eps)\mathcal G(\zeta, s^0)
	&=\mathcal G(g, s)-\mathcal G\big(\mu+\Upsilon(\mu, s),s\big)+\mathcal G\big(\mu+\Upsilon(\mu,s),s\big)\notag\\
	&\qquad-(1-\eps)\Bigl( \mathcal G(\zeta, s^0)-\mathcal G\big(\mu^0+\Upsilon(\mu^0, s^0), s^0\big)+\mathcal G\big(\mu^0+\Upsilon(\mu^0,s^0), s^0\big) \Bigr)\notag\\
	&=\underbrace{\mathcal G(g,s)-\mathcal G\big(\mu+\Upsilon(\mu,s),s\big)-(1-\eps) \Big(\mathcal G(\zeta,s^0)-\mathcal G\big(\mu^0+\Upsilon(\mu^0,s^0),s^0\big)\Big)}_{=:E^\perp}\notag\\ 
	&\qquad+\underbrace{G(\mu,s)-(1-\eps)G\big(\mu^0,s^0\big)}_{=:E^T}\,.
\end{align}
We will estimate separately $E^\perp$, $E^T$ and the two error terms in the right-hand side of \eqref{e:useslicing}.
\smallskip

\noindent{\bf Estimate of $E^\perp$.} For what concerns $E^\perp$, we first notice that, by \eqref{e:competitor}, 
$$g(r)-\mu(\eta_3(r))-\Upsilon\big(\mu(\eta_3(r)),s(\eta_3(r))\big)=\eta_1(r)\zeta_1+\eta_2(r)\zeta_2=:\hat\zeta(r)\in K^\perp.$$
Using the identity
$$f(1)=f(0)+f'(0)+\frac12A+\int_{0}^1(1-t)\big(f''(t)-A\big)\,dt\,,\quad\text{for}\quad A\in\R,$$ 
for the function $f(t):=\mathcal G\big(\mu+\Upsilon(\mu,s)+t\hat\zeta,s\big)$ and $\ds A=\delta^{2}\mathcal F(0)[\hat\zeta,\hat\zeta]$ we get
 \begin{align}
\Big|\Big(\mathcal G(g,s)-&\mathcal G\big(\mu+\Upsilon(\mu,s),s\big)\Big)- \frac12\delta^{2}\mathcal F(0)[\hat\zeta,\hat\zeta]\Big|\notag\\
	 &\le {\sup_{t\in [0,1]}\left|\delta^{2}\mathcal G\big(\mu+\Upsilon(\mu,s)+t\hat\zeta,s \big)-\delta^2\mathcal F(0)\right|}\,\|\hat\zeta\|^2_{H^{\sfrac12}}=:S_1\,\|\hat\zeta\|^2_{H^{\sfrac12}},\label{e:S_1}
 \end{align}
 where we used the fact that $\delta^2\mathcal G(0,0) = \delta^2 \mathcal F(0)$ and,  by \eqref{e:LS2},  
 $$f'(0)=\delta\mathcal G\big(\mu+\Upsilon(\mu,s),s\big)[(\hat\zeta,0)]=P_{K^\perp}\left(\delta\mathcal G\big(\mu+\Upsilon(\mu,s),s\big)\right)[(\hat\zeta,0)]=0.$$
 Moreover, since $\hat \zeta(r) =\eta_1(r)\zeta_1+\eta_2(r)\zeta_2$ and $\zeta_1$ and $\zeta_2$ are sums of orthogonal eigenfunctions of $\delta^2\mathcal F(0)$, we get that 
 $$\delta^{2}\mathcal F(0)[\hat\zeta,\hat\zeta]=\delta^2\mathcal F(0)[\eta_1 \zeta_1+\eta_2\zeta_2, \eta_1 \zeta_1+\eta_2\zeta_2]=\eta_1^2\,\delta^2\mathcal F(0)[\zeta_1,\zeta_1]+\eta_2^2\,\delta^2\mathcal F(0)[\zeta_2,\zeta_2].$$
Analogously, since $\zeta=\mu^0+\Upsilon(\mu^0,s^0)+\tilde\zeta$ we have
\begin{align}
  \Big|\Big(\mathcal G(\zeta,s^0)-&\mathcal G \big(\mu^0+\Upsilon(\mu^0,s^0),s^0\big)\Big)-
	   \frac12\Big(\delta^2\mathcal F(0)[\zeta_1,\zeta_1]+\delta^2\mathcal F(0)[\zeta_2,\zeta_2]\Big)\Big|\notag\\
	  &\le {\sup_{t\in [0,1]}\left|\delta^{2}\mathcal G\big(\mu^0+\Upsilon(\mu^0, s^0)+t \tilde\zeta,s^0 \big)-\delta^2\mathcal F(0)\right|}\,\|\tilde\zeta\|^2_{H^{\sfrac12}}=: S_2\,\|\tilde\zeta\|^2_{H^{\sfrac12}}.\label{e:S_2}
 \end{align}
 Choosing $\eta_1(r) = 1- (d+1)\eps(1-r)$
 and $\eta_2(r) = 1$ we get that $\|\hat\zeta(r)\|^2_{H^{\sfrac12}}\le\|\zeta_1\|^2_{H^{\sfrac12}}+\|\zeta_2\|^2_{H^{\sfrac12}}=\|\tilde\zeta\|^2_{H^{\sfrac12}}$. 
 Thus, combining the inequalities \eqref{e:S_1} and \eqref{e:S_2}, we get that there is a dimensional constant $C_d>0$ such that for every $\eps\le \sfrac14$ we have
 \begin{align*}
 \int_0^1E^\perp r^{d-1}\,dr	
	 &\leq \frac12\delta^2\mathcal F (0)[\zeta_1,\zeta_1]\int_0^1(\eta_1^2(r)-(1-\eps))r^{d-1}\,dr\notag\\
	 &\quad+\frac12\delta^2\mathcal F (0)[\zeta_2,\zeta_2]\int_0^1(\eta_2^2(r)-(1-\eps))r^{d-1}\,dr+\int_{0}^1\left(|S_1|+|S_2|\right)\,r^{d-1}\,dr\,\|\tilde\zeta\|^2_{H^{\sfrac12}}\notag\\
	 &\leq   C_d\,\eps\left(-\left(\min_{\{i\mid \lambda_i > 0\}} \lambda_i\right)\|\zeta_1\|^2+\left(\max_{\{i\mid\lambda_i < 0\}}\lambda_i\right) \|\zeta_2\|^2 \right)r+\int_{0}^1\left(|S_1|+|S_2|\right)\,r^{d-1}\,dr\,\|\tilde\zeta\|^2_{H^{\sfrac12}}\notag\\
      &\leq \left(- C_{d, b} \eps +   \int_{0}^1\left(|S_1|+|S_2|\right)\,r^{d-1}\,dr\right)\,\|\tilde\zeta\|^2_{H^{\sfrac12}}\,.                
 \end{align*}
In order to bound $S_1$ and  $S_2$, we notice that the second variation of $\mathcal G$ is given by
\begin{align*} \delta^2\mathcal G(g,s)[(\zeta,0), (\zeta,0)]=\, \delta^2 \mathcal F(g)[\zeta, \zeta]+ s^3 \delta^2 \lambda(g)[\zeta, \zeta]\,, \quad\text{for every}\quad s\in\R\,,\  g,\zeta\in C^{2,\alpha}(\partial\Omega).\end{align*}
Applying this formula to the second variation $\delta^{2}\mathcal G\big(\mu+\Upsilon(\mu,s)+t\hat\zeta,s \big)$ in the directions $(\hat\zeta,0)$ and $(\tilde \zeta,0)$,
 then using \eqref{e:modulus_of_continuity} and the estimate $\delta^2 \lambda(g)[\tilde\zeta, \tilde\zeta] \leq C\|\tilde\zeta\|_{H^{\sfrac12}}^2$ (see Subsection \ref{sub:modulus_of_continuity}), we get that there is a 
modulus of continuity $\omega$ such that
$$|S_1|+|S_2|\leq C_{d,b} \big( \omega(\mu+\|\hat\zeta\|_{C^{2,\alpha}})  + \omega(\mu^0+\|\tilde\zeta\|_{C^{2,\alpha}}) + s^3\big).$$
Recall that $\tilde \zeta=\zeta_1+\zeta_2$, where $\zeta_2$ is a finite linear combination of eigenfunctions of the operator $T$ (see Lemma \ref{l:main}) with coefficients which are bounded by $\| \zeta\|_{H^{1/2}}$.  In particular, there is a constant $C_{d,b}$, depending only on the $C^{2,\alpha}$ norms of the eigenfunctions in the index of the cone and in the kernel of $\delta^2\mathcal F(0)$, such that
$$\|\zeta_1\|_{C^{2,\alpha}}+\|\zeta_2\|_{C^{2,\alpha}}\le\|\zeta\|_{C^{2,\alpha}}+\|\mu_0+\Upsilon(\mu_0,s_0)+\zeta_2\|_{C^{2,\alpha}}+\|\zeta_2\|_{C^{2,\alpha}}\le  C_{d,b} \|\zeta\|_{C^{2,\alpha}}.$$
Thus, we get 
$$|S_1|+|S_2|\leq C_{d,b} \,\big(\omega(\|\zeta\|_{C^{2,\alpha}})+s^3\big).$$
 so that, up to choosing the $\|\zeta\|_{C^{2,\alpha}}$ and $s$ small enough, we have
 \begin{equation}\label{e:improvement_1}
 \int_0^1E^\perp r^{d-1}\,dr\leq - C_{d, b} \,\eps\,\|\tilde\zeta\|_{H^{\sfrac12}}^2.
 \end{equation}

\noindent{\bf Estimate of $E^T$.}
We can assume without loss of generality that $G(\mu^0, s^0)>0$, otherwise set $\mu\equiv\mu^0$ and $s\equiv s^0$, and the conclusion holds for all sufficiently small $\eps$ depending only on $b$ and $\delta > 0$. By the \L ojasiewicz inequality for the analytic function $G$ (see \cite{Loj}), there exist a neighborhood $U$ of $(0,0)\in\R^N\times\R$ and constants $C, \gamma>0$, depending on $b$ and the dimension $d$, with $\gamma\in (0,\sfrac12]$, such that
\begin{equation}\label{e:loj}
|G(\mu,s)|^{1-\gamma}\leq C\, |\nabla G(\mu,s)|\,, \qquad \mbox{ for every }(\mu,s)\in U\,.
\end{equation}
Therefore, as long as $0<G(\mu(t),s(t))$, we can use the flow \eqref{e:grad_flow} to estimate,
\begin{equation}\label{e:mon}
\begin{aligned}
G(\mu(t), s(t))-G(\mu^0, s^0)=&\int_0^t\frac{d}{d\tau}G(\mu(\tau), s(\tau))\,d\tau=\int_0^t\nabla G(\mu(\tau),s(\tau))\cdot(\mu'(\tau), s'(\tau))\,d\tau\\
=&-\int_0^t|\nabla G(\mu(\tau),s(\tau))|\,d\tau\leq0,
\end{aligned}
\end{equation}
so that the function $t\mapsto  G(\mu(t), s(t))$ is non-increasing and $\geq 0$, and therefore there exists a time $t_1>0$ such that
$$
\begin{cases}
G(\mu(t), s(t))\geq \frac12 G(\mu^0, s^0)>0 & \mbox{if } 0\leq t\leq t_1\\
G(\mu(t), s(t))\leq \frac12 G(\mu^0, s^0)  & \mbox{if } t\geq t_1\,.
\end{cases}
$$
If $\eta_3(r) \leq t_1$ then using \eqref{e:mon} we get
\begin{align}
G\big(\mu(\eta_3(r)), s(\eta_3(r))\big)-(1-\eps)G(\mu^0, s^0)
	&= -\int_{0}^{\eta_3(r)}|\nabla G(\mu(\tau), s(\tau))|\,d\tau +\eps\, G(\mu^0, s^0)\notag \\
	&\leq-C\int_{0}^{\eta_3(r)}|G(\mu(\tau), s(\tau))|^{1-\gamma}\,d\tau+\eps\, G(\mu^0, s^0)\notag\\
	&\leq -C\,|G(\mu(\eta_3(r)), s(\eta_3(r)))|^{1-\gamma} \,\eta_3(r)+\eps\, G(\mu^0, s^0)\notag \\
	&\leq -C\,|G(\mu^0, s^0)|^{1-\gamma} \,\eta_3(r)+\eps\, G(\mu^0, s^0)\notag\\
	&= -\left(C\eta_3(r) -\eps G(\mu^0, s^0)^{\gamma}\right)\, G(\mu^0, s^0)^{1-\gamma},
\end{align}
where in the first inequality we used the \L ojasiewicz inequality \eqref{e:loj}, the second follows by the monotonicity of $g$ and the third by the assumption $\eta_3(r) \leq t_1$.

\noindent If $\eta_3(r) > t_1$, then choosing $\ \ds\eta_3(r) := \frac{d+2}{C}\eps G(\mu^0, s^0)^\gamma (1-r)\ $, we have 
$$\begin{aligned} G\big(\mu(\eta_3(r)), s(\eta_3(r))\big)-(1-\eps)G(\mu^0, s^0) &\le -\left(\frac{1}{2}-\eps \right)G(\mu^0, s^0)\\ &\le -\big(C\,\eta_3(r)-\eps G(\mu^0, s^0)^\gamma \big)\, G(\mu^0, s^0)^{1-\gamma}.\end{aligned}$$
Therefore, for every $r\in[0,1]$, we have the estimate \begin{equation}\label{e:estimatingflow}\begin{aligned}
\int_0^1 E^T r^{d-1} dr =& \int_0^1 \Big(G\big(\mu(\eta_3(r)),s(\eta_3(r))\big)-(1-\eps)G(\mu^0,s^0)\Big) r^{d-1} dr\\
 \leq& -G(\mu^0, s^0)^{1-\gamma}\int_0^1  \left(C\eta_3(r) -\eps G(\mu^0, s^0)^\gamma \right) r^{d-1}\ dr\\
=&-\eps G(\mu^0, s^0) \int_0^1 \big((d+1)-(d+2)r\big)r^{d-1}dr = -\frac{\eps G(\mu^0, s^0)}{d(d+1)}.
\end{aligned}
\end{equation}

\noindent{\bf Estimating the two error terms in the right-hand side of \eqref{e:useslicing}.}
For the radial term, we notice that since $s$ is $1$-Lipschitz,  for $\kappa_0/2 \le s_0\le 2\kappa_0$ and $\eps$ and $G(\mu^0,s^0)$ small enough, we have
$$\frac{\kappa_0}4\le s^0-\eta_3(r)\le s(\eta_3(r))\le s^0+\eta_3(r)\le 4\kappa_0.$$
Using this estimate in the $\kappa$ direction we get
\begin{equation}\label{e:errorink}
\int_0^1 r^{d+1} \frac{9s(\eta_3(r))^4}{\kappa_r^2}[s'(\eta_3(r))]^2 [\eta'_3(r)]^2\,dr  \leq C_{d,b}\,\eps^2 \,G(\mu^0, s^0)^{2\gamma}.
\end{equation} 
We now estimate the second term in the right-hand side of \eqref{e:useslicing}. By \eqref{e:competitor} we have 
\begin{equation*}
g'(r)=\partial_r g(r,\theta)=\eta_1'(r) \zeta_1(\theta)+\eta'_3(r)\Bigl(\mu'(\eta_3(r))+\big(\mu'(\eta_3(r)), s'(\eta_3(r))\big)\cdot\nabla\Upsilon\big(\mu(\eta_3(r)), s(\eta_3(r))\big)\Bigr)\,.
\end{equation*}
Since $\|\zeta_1\|_{L^\infty(\partial \Omega)}\le C_{d,b}$, $|\mu'|\le 1$ and $|\nabla\Upsilon|\le C_{d,b}$ in a neighborhood of $(0,0)$, we get 
\begin{equation*}
\|g'(r)\|_{L^2(\partial\Omega)}\le C_{d,b}\big(\|\zeta_1\|_{L^2}|\eta_1'(r)|+|\eta_3'(r)|\big)\,,
\end{equation*}
so, using Lemma \ref{l:main} (iii) and the definition of $\eta_3$, we conclude that
\begin{equation}\label{e:erroring}
\int_0^1 \kappa_r^2 r^{d+1} \int_{\partial B_1} \big|\delta\phi_{g_r}[g'(r)]\big|^2 \,d\sigma \,dr \leq C_{d,b} \Big(\eps^2 \|\zeta_1\|_{L^2}^2 + \eps^2 G(\mu^0, s^0)^{2\gamma}\Big).
\end{equation}

\noindent{\bf Conclusion of the proof.} In order to conclude the proof, we consider two cases. 

\noindent {\bf Case 1:} $\|\tilde{\zeta}\|_{H^{\sfrac 12}} > C^{-1} G(\mu^0, s^0)^{1/2}$ for some universal constant $C > 0$ depending only on $b$ and $d$.   In this scenario, let $\eta_3 \equiv 0$ and combine \eqref{e:useslicing}, \eqref{e:exp_h}, \eqref{e:improvement_1}, \eqref{e:errorink} and \eqref{e:erroring} to get 
$$
W(\bar{h}) - W(b) - (1-\eps)(W(z) - W(b)) \leq -C_{d,b} \eps\|\tilde{\zeta}\|_{H^{\sfrac 12}}^2 + \eps G(\mu^0, s^0) \leq \left(-C_{d,b} \eps+\frac1C\right) \|\tilde{\zeta}\|_{H^{\sfrac12}}^2 < 0\,,
$$ 
where we used the fact that $\eta'_3\equiv 0$ so that no error term come from the flow.  Therefore, the epiperimetric inequality holds for $\eps > 0$ small but universal.

\smallskip

\noindent {\bf Case 2:} Otherwise, we need to chose $\eps > 0$ depending on $G(\mu^0, s^0)$ so that we can absorb the errors in \eqref{e:erroring}, \eqref{e:errorink}, into the gain \eqref{e:estimatingflow}. Letting $\eps = \eps_1 G(\mu^0, s^0)^{1-2\gamma}$ for some $\eps_1 > 0$ small (but universal) and combining  
\eqref{e:improvement_1}, \eqref{e:estimatingflow}, \eqref{e:errorink} and \eqref{e:erroring}, we get
\begin{align}\label{e:final}
W \big(\bar h(r,\theta)\big)  - (1-\eps)W(z)+\eps W(b)&\leq - C_{d, b} \,\eps\,\|\tilde\zeta\|^2_{H^{\sfrac12}(\de \Omega)}-\frac{\eps G(\mu^0, s^0)}{d(d+1)}\notag\\
 &\qquad +C_{d,b} \Big(\eps^2 \|\tilde\zeta\|_{H^{\sfrac 12}(\de \Omega)}^2 + \eps^2 G(\mu^0, s^0)^{2\gamma}\Big)\notag\\
 &\leq -C_{d,b}\,\eps_1\,G(\mu^0, s^0)^{2-2\gamma}\,.
\end{align} 
Finally, writing $W(z) - W(b)$ as in \eqref{e:exp_h} and using the estimate \eqref{e:S_2}, we obtain 
$$
\begin{aligned} 
W(z) - W(b) =\int_0^1 \mathcal G(\zeta, s^0)\,r^{d-1}\,dr=& \frac1d\left(\mathcal G(\zeta, s^0) - \mathcal G\big(\mu^0 + \Upsilon(\mu^0, s^0), s^0\big) + G(\mu^0, s^0)\right)\\
\leq& C_{d,b}\left(\|\tilde{\zeta}\|_{H^{\sfrac12}}^2 + G(\mu^0, s^0)\right) \leq C_{d,b}G(\mu^0, s^0)\,,
\end{aligned}
$$
so that
$$
-\eps\leq -C_{d,b}\,\eps_1\,\left( W(z) - W(b)\right) ^{1-2\gamma}
$$
and the conclusion follows by replacing this in \eqref{e:final} with $\gamma'=1-2\gamma\in [0,1)$.

\subsection{Proof of Proposition \ref{prop:first_mode}: the integrable case.}\label{sub:first_mode}
Before starting with the proof of Proposition \ref{prop:first_mode} in the case when $b$ is integrable, we need a preliminary Lemma, which will allow us to kill rotations, that is to choose a parametrization for which the kernel of $\delta^2\mathcal F(0)$ is trivial.  Throughout this subsection we assume that $b$ is integrable through rotations (Definition \ref{d:integrability}). 

\begin{lemma}[Killing the linear part]\label{l:kill_linear}
There exists a dimensional constant $\delta>0$ such that if $\zeta\colon \de \Omega \rightarrow \mathbb R$ satisfies $\|\zeta\|_{C^{2,\alpha}(\de \Omega)} < \delta$, then there exists a rotation $U$ of the coordinate axes such that $\graph_{\de \Omega} (\zeta)=\graph_{U(\de \Omega)}(\tilde\zeta)$, where $\tilde \zeta$ satisfies
\begin{enumerate}[(i)]
\item $\|\tilde \zeta\|_{C^{2,\alpha}(U(\de \Omega))} \le C\,\| \zeta\|_{C^{2,\alpha}(\de \Omega)}$, for a dimensional constant $C$;
\item if $\xi_i$ is such that $\delta^2 \mathcal F(0)[\xi_i,\cdot]=0$, then $\ds\int_{U( \de \Omega)}  \tilde \zeta\,\xi_i\,d\mathcal H^{d-2} = 0$.  
\end{enumerate}
\end{lemma}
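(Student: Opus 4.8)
\textbf{Proof proposal for Lemma \ref{l:kill_linear}.}

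The plan is to parametrize the rotations near the identity and solve for the right one by an implicit function argument. Let $G$ be a neighbourhood of the identity in $SO(d)$, parametrized by its Lie algebra $\mathfrak{so}(d)$ of skew-symmetric matrices: for $A\in\mathfrak{so}(d)$ small, write $U_A:=\exp(A)$. Under integrability through rotations, the kernel $K=\ker(\delta^2\mathcal{F}(0))$ is spanned precisely by the infinitesimal generators of rotations of $\Omega$, namely by the normal components $\langle A x,\nu(x)\rangle$ for $A$ ranging over $\mathfrak{so}(d)$ (modulo those $A$ that fix $\Omega$); call $\mathfrak{a}$ a complement in $\mathfrak{so}(d)$ to the stabilizer of $\Omega$, so $\dim\mathfrak{a}=\dim K=N$ and $A\mapsto \langle Ax,\nu(x)\rangle$ is an isomorphism $\mathfrak{a}\to K$. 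First I would fix, for each $A\in\mathfrak{a}$ small and each $\zeta$ with $\|\zeta\|_{C^{2,\alpha}}<\delta$, the function $\tilde\zeta=\tilde\zeta_{A,\zeta}$ on $U_A(\de\Omega)$ defined by the geometric identity $\graph_{\de\Omega}(\zeta)=\graph_{U_A(\de\Omega)}(\tilde\zeta)$; this is well-defined and smooth in $(A,\zeta)$ for $\delta$ small because $U_A(\de\Omega)$ is $C^{2,\alpha}$-close to $\de\Omega$ and the graph of $\zeta$ is a $C^{2,\alpha}$ hypersurface in the sphere which therefore remains a normal graph over the nearby competitor base $U_A(\de\Omega)$. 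Conclusion (i) then follows immediately from the chain rule applied to the map $(A,\zeta)\mapsto\tilde\zeta_{A,\zeta}$ together with $\tilde\zeta_{0,0}=0$ and the compactness of the parameter set.

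For (ii), I would set up the map
\[
\Phi:\mathfrak{a}\times C^{2,\alpha}(\de\Omega)\longrightarrow \mathbb R^N,\qquad
\Phi(A,\zeta)_i:=\int_{U_A(\de\Omega)}\tilde\zeta_{A,\zeta}\,\xi_i\circ U_A^{-1}\;d\mathcal H^{d-2},
\]
where $\{\xi_i\}_{i=1}^N$ is a basis of $K$ (precomposed with $U_A^{-1}$ so as to live on the right domain). We have $\Phi(0,0)=0$, and I claim $\partial_A\Phi(0,0):\mathfrak{a}\to\mathbb R^N$ is invertible. Indeed, differentiating the geometric identity at $A=0$, $\zeta=0$ in the direction $\dot A\in\mathfrak{a}$ shows $\partial_A\tilde\zeta|_{(0,0)}[\dot A]=-\langle \dot A x,\nu(x)\rangle$ (the first-order change of the base point picks up minus the normal component of the rotation field), so the $i$-th component of $\partial_A\Phi(0,0)[\dot A]$ is $-\int_{\de\Omega}\langle\dot A x,\nu\rangle\,\xi_i\,d\mathcal H^{d-2}$; this is (minus) the Gram matrix of the $\xi_i$ against the rotation generators, which is invertible exactly because the generators span $K$ (integrability) and the $\xi_i$ form a basis of $K$. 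The implicit function theorem then yields, for every $\zeta$ with $\|\zeta\|_{C^{2,\alpha}}<\delta$ (with $\delta$ dimensional), a unique small $A=A(\zeta)$ with $\Phi(A(\zeta),\zeta)=0$ and $|A(\zeta)|\le C\|\zeta\|_{C^{2,\alpha}}$; taking $U=U_{A(\zeta)}$ and $\tilde\zeta=\tilde\zeta_{A(\zeta),\zeta}$ gives (ii), and the bound on $|A(\zeta)|$ feeds back into the constant in (i).

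The main obstacle I anticipate is purely bookkeeping rather than conceptual: carefully verifying that $\tilde\zeta_{A,\zeta}$ is well-defined and $C^1$ in $(A,\zeta)$ near $(0,0)$ — i.e. that a $C^{2,\alpha}$ normal graph over $\de\Omega$ of size $<\delta$ is again a (single-valued, $C^{2,\alpha}$) normal graph over the perturbed base $U_A(\de\Omega)$ — and computing the linearization $\partial_A\tilde\zeta|_{(0,0)}$ correctly, keeping track of the exponential-map parametrization of the sphere in \eqref{e:graph}. Once the identification $\partial_A\tilde\zeta|_{(0,0)}[\dot A]=-\langle\dot A x,\nu\rangle$ is in hand, the surjectivity/invertibility of $\partial_A\Phi(0,0)$ is exactly a restatement of the integrability hypothesis (every kernel element of $\delta^2\mathcal F(0)$ is a rotation generator), and the rest is the standard implicit function theorem together with $\delta^2\mathcal F(0)$ being negative-semidefinite-free on $K$ only in the harmless directions. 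I would also note in passing that the ``constant perturbation orthogonal to negative eigenfunctions'' clause of Definition \ref{d:integrability} is not needed here; it is used later, not in this lemma.
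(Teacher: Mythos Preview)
Your proposal is correct and follows essentially the same route as the paper: set up a map from (perturbation, rotation) to the $N$ orthogonality conditions, compute that its derivative in the rotation variable at $(0,0)$ is (minus) the Gram matrix of the kernel generators, and apply the implicit function theorem. The paper parametrizes rotations directly by $\mathcal R=\mathrm{span}\{\xi_1,\dots,\xi_k\}$ rather than by a complement $\mathfrak a\subset\mathfrak{so}(d)$ to the stabilizer, but since integrability identifies these, the arguments are the same; your remark that the second clause of Definition~\ref{d:integrability} is not used here is also correct.
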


\begin{proof} Consider the family of functions defined by
	$$
	\mathcal{R}:=\left\{ u\colon \de \Omega \to \R\,:\, u:=a_1\,\xi_{1}+\dots+a_k\,\xi_k\,,\,\,a_1,\dots,a_k\in \R\right\}
	$$
	where $k = \dim \mathrm{ker}\; \delta^2 \mathcal{F}(0)$ and $\xi_i$, $i=1,\dots,k$, are the eigenfunctions of $\delta^2 \mathcal{F}(0)$ relative to the eigenvalue $0$ with $\|\xi_i\|_{L^2(\partial \Omega)}=1$. By the definition of integrability it follows that each $\xi_i$ is an infinitesimal generator of a rotation of $\de \Omega_b$. For any $u \in \mathcal{R}$, let $U_u$ be the rotation generated by $u$ of magnitude $\sum_{i=1}^{k} |a_i|$. 
	
	We consider the functional $\Psi\colon C^{2,\alpha}(\de \Omega)\times \mathcal R\to \R^{k}$, defined in a neighborhood of $(0,0)$, by
	$$
	\Psi (f, u):=\left( \int_{U_u( \de \Omega)}f_u\,\xi_1^u\,d\mathcal H^{d-2},\dots, \int_{U_u( \de \Omega)}f_u\,\xi_{k}^u\,d\mathcal H^{d-2}  \right)
	$$
	where, as above, $f_u$ satisfies $\graph_{\de \Omega} (f)=\graph_{U_u(\de \Omega)}(f_u)$. Notice that, since $\graph_{\de \Omega} (0)=\graph_{U_{t\xi_i}(\de \Omega)}(-t\xi_i)$, we have with $u = t\xi_i$ that
	$$
	\Psi(0, t\xi_i)=-\left(t \int_{U_u(\partial \Omega)} \xi_i^u \,\xi_1^u  ,\dots,t \int_{U_u(\partial \Omega)} \xi_{i}^u \,\xi_{k}^u \right)
	$$ 
	so that, identifying $\mathcal R$ with $\R^{k}$ using the basis $(\xi_i)_i$, we compute
	$\Psi(0,0)=0$ and $\nabla_{\mathcal R} \Psi(0,0)=-\mbox{Id}$.
	By the implicit function theorem, the conclusion immediately follows.
\end{proof}

\begin{proof}[Proof of Proposition \ref{prop:first_mode} for integrable cones] 
We first notice that by Lemma \ref{l:kill_linear} we can assume that $\mu^0 = 0$. 
Thus, in the notation of Subsection \ref{sub_1_mode}, we are left with $\zeta = \zeta^\perp = \tilde{\zeta} = \zeta_1 + \zeta_2$. The positivity set of our competitor will be determined by $g(r,\theta) = \eta_1(r)\zeta_1(\theta) + \zeta_2(\theta)$, while the competitor itself is given by $\overline{h}(r,\theta) = r\kappa_r\phi_{g_r}(\theta)$, where 
$$\kappa_r^2  = \kappa_0^2+\eta_3(r)(s^0)^3\ ,\qquad (s^0)^3=\kappa^2 - \kappa_0^2\ ,\qquad \eta_3(r) = 1 - \varepsilon a(1-r),$$
and $a \in \mathbb R$ is given by $a = \text{sign}(s_0)\,\delta \lambda(0)[\zeta] = \text{sign}(s_0)\,\delta\lambda(0)[\zeta_1]$, since by the integrability assumption $\delta\lambda(0)[\zeta_2]=0$.  Note that $\kappa_1^2 = \kappa^2$, which means that $\overline{h}(r,\theta)$ satisfies the boundary condition $\overline{h}(1,\theta)=\kappa\phi_\zeta(\theta)=c(\theta)$. 
As above, the slicing lemma gives 
 \begin{equation}\label{e:useslicing2}
\begin{aligned}
\Big|\Big(W\big(\bar h(r,\theta)\big) - (1&-\eps)W(z)+\eps W(b)\Big) - \int_0^1 \Big[\mathcal G\big(g_r, s^0 \eta_3^{1/3}(r)\big) - (1-\eps)\mathcal G(\zeta, s^0)\Big]r^{d-1}\, dr\Big|\\
&\le3(s^0)^3\eps^2 a^2  \int_0^1 r^{d+1} \eta_3(r)^2\ dr + \int_0^1 \kappa_r^2 r^{d+1} \int_{\partial B_1} \big|\delta\phi_{g_r}[g'_r]\big|^2 \,d\theta \,dr.
\end{aligned}
\end{equation}
\noindent For simplicity let $g = g_r$ and $s = s^0\eta_3(r)^{1/3}$. 
As in \eqref{e:exp_h}, we write: 
$$
\mathcal G(g, s)-(1-\eps)\mathcal G(\zeta, s^0) = \underbrace{\mathcal G(g, 0) - (1-\eps)\mathcal G(\zeta, 0)}_{=:\widetilde{E}^\perp} + \underbrace{(\mathcal G(g, s) - \mathcal G(g,0)) - (1-\eps)(\mathcal G(\zeta, s^0) - \mathcal G(\zeta, 0))}_{=:\widetilde{E}^T}.
$$ 
\noindent To estimate $\widetilde{E}^\perp$ we proceed as for the term $E^\perp$ above, so that by \eqref{e:improvement_1} with $\Upsilon\equiv 0$ we get
\begin{align*}
\widetilde{E}^\perp&=\mathcal G(g,0)-(1-\eps)\mathcal G(\zeta,0)=\mathcal F(g)-(1-\eps)\mathcal F(\zeta)\\
&\le\frac12\left(\delta^2\mathcal F(0)[g,g]-(1-\eps)\delta^2\mathcal F(0)[\zeta,\zeta]\right)+\omega(\|g\|_{C^{2,\alpha}})\|g\|_{H^{\sfrac12}}^2+\omega(\|\zeta\|_{C^{2,\alpha}})\|\zeta\|_{H^{\sfrac12}}^2\\
&\le\frac12\left(\eta_1^2(r)-(1-\eps)\right)\delta^2\mathcal F(0)[\zeta_1,\zeta_1]+\frac{\eps}2\delta^2\mathcal F(0)[\zeta_2,\zeta_2]+\omega(\|\zeta_1\|_{C^{2,\alpha}}+\|\zeta_2\|_{C^{2,\alpha}})\|\zeta\|_{H^{\sfrac12}}^2.
\end{align*}
Integrating in $r$, and choosing again $\eta_1(r)=1-(d+1)\eps(1-r)$, we get 
\begin{equation}\label{e:int1}
 \int_0^1\widetilde{E}^\perp r^{d-1}\,dr\leq - C_{d, b} \,\eps\,\|\zeta\|_{H^{\sfrac12}}^2.
\end{equation}
Moreover, again as in \eqref{e:erroring} without the $G$ term, we have
\begin{equation}\label{e:2int}
\!\!\!\!\!\int_0^1 \kappa_r^2 r^{d+1} \int_{\partial B_1} \big|\delta\phi_{g_r}[g'_r]\big|^2 \,d\theta \,dr=\int_0^1 \kappa_r^2 r^{d+1} (\eta_1'(r))^2\int_{\partial B_1} \big|\delta\phi_{g_r}[\zeta_1]\big|^2 \,d\theta \,dr \leq C_{d,b} \eps^2 \|\zeta_1\|_{L^2}^2 .
\end{equation}

\noindent To estimate $\widetilde{E}^T$ we write \begin{equation}\label{e:varykappa} \begin{aligned} \widetilde{E}^T&= (s^0)^3 \Big(\eta_3(r)\big(\lambda(g) - (d-1)\big) - (1-\eps)\big(\lambda(\zeta) - (d-1)\big)\Big)\\
&\le (s^0)^3\Big( \eta_3(r)\delta \lambda(0)[g] - (1-\eps)(\delta\lambda(0)[\zeta] \Big) +|s^0|^3\sup_{t\in [0,1]} \Big(\big|\delta^2 \lambda(tg)[g,g]\big|+\big|\delta^2 \lambda(t\zeta)[\zeta,\zeta]\big| \Big) \\
&\leq (s^0)^3 \big(\eta_1(r)\eta_3(r) - (1-\eps)\big)\delta \lambda(0)[\zeta_1] +C|s^0|^3\|\zeta\|_{H^{\sfrac12}}^2,
\end{aligned}
\end{equation}
where in the last line we used the continuity of $\delta^2 \lambda$ and that the eigenfunctions of $\delta^2\mathcal F(0)$ with negative eigenvalues do not change $\lambda$ to first order.
Now, by the definition of $\eta_1$ and $\eta_3$ we have
$$
\begin{aligned}
\int_0^1 \big(\eta_1(r) \eta_3(r) - (1-\eps)\big)r^{d-1}\ dr 
&= -\frac{\eps a}{d(d+1)} + \frac{2\eps^2 a}{d(d+2)} = -\frac{a \eps}d\left(\frac1{d+1}-\frac{2\eps}{d+2}\right).
\end{aligned}
$$ 
Since $a = \text{sign}(s_0)\,\delta\lambda(0)[\zeta_1]$, choosing $\eps > 0$ small enough depending only on $d$, we get
\begin{equation}\label{e:widetildeestimate}
\int_0^1 \widetilde{E}^T r^{d-1} \,dr  \leq - C_d|s^0|^3(\delta\lambda(0)[\zeta_1])^2 \eps + C|s^0|^3 \|\zeta\|_{H^{\sfrac12}}^2.
\end{equation} 
Putting \eqref{e:useslicing2}, \eqref{e:int1}, \eqref{e:2int}, \eqref{e:widetildeestimate} and \eqref{e:improvement_1} together, we get that 
$$
\begin{aligned}
\Big(W\big(\bar h(r,\theta)\big) - (1-\eps)W(z)+\eps W(b)\Big) 
	\leq &- \eps C_{d,b}\left(\|\zeta\|_{H^{\sfrac12}}^2  +|s^0|^3(\delta\lambda(0)[\zeta_1])^2\right) \\ 
	&+\eps^2C_{d,b}\left(\|\zeta_1\|_{H^{\sfrac12}}^2 +|s^0|^3(\delta\lambda(0)[\zeta_1])^2\right)+ C|s^0|^3 \|\zeta\|_{H^{\sfrac12}}^2. 
\end{aligned}
$$ 
Letting $\eps > 0$ be small enough and then perhaps shrinking $\delta \geq \|u - b\|^2_{L^2(\partial B_1)} \geq |s^0|^3$ so that $\delta < <\eps$, we get the desired result. 
\end{proof}

\section{Proof of Theorem \ref{t:main}}\label{ss:conclusion}
Theorem \ref{t:main} will follow by applying Theorem \ref{t:epi_smooth} on dyadic annuli thanks to a suitable parametrization lemma. The Smooth Parametrization Lemma \ref{l:ann_par} uses the  strong convergence of minimizers and the fundamental regularity result of Alt and Caffarelli \cite{AlCa}, to show that if the trace of a minimizer is sufficiently close to that of a cone with isolated singularity, then we can parametrize the free boundary of the minimizer on an annulus over that of the cone. Then in Lemma \ref{l:comp_scales} we show that the condition from Lemma \ref{l:ann_par} remains uniform in the annuli away from the origin. Theorem \ref{t:epi_smooth} can then be applied in the annulus to show that the closeness decades and so the procedure can be iterated.

\subsection{Smooth parametrization lemma}
We start by proving our main parametrization lemma.
\begin{lemma}[Smooth parametrization lemma]\label{l:ann_par}
	Let $b$ be a $1$-homogeneous minimizer of $\mathcal E$ with isolated singularity in zero and let $\tau\in (0,1)$. For every $\eps>0$, there exists $\delta_1=\delta_1(\eps,\tau,b)>0$ such that if $u\in H^{1}(B_1)$ is a minimizer of $\mathcal{E}$ satisfying $0\in\partial\{u>0\}$
	\begin{equation}\label{e:par_hyp_1}
	\Theta_u(0):=W(u,0) =W(b)\,,\qquad 
	\|u-b\|_{L^2(\de B_1)}<\delta_1
	\qquad\mbox{and}\qquad
	W(u,1)-W(b)\leq \delta_1\,,
	\end{equation}
	then there exists a function $\zeta(\theta, r)\in C^{2,\alpha}(\de \{b>0\})$ (indeed analytic) such that 
	\begin{equation}\label{e:par_ann}
	\de\{u>0\}\cap \partial B_r=\graph_{\partial \Omega_b}(\zeta(-, r))
	\qquad\mbox{and} \qquad
	\|\zeta(-, r)\|_{C^{2,\alpha}}\leq \eps,\:\: \forall r\in (\tau, 1-\tau)\,.
	\end{equation}
\end{lemma}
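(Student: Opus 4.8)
The plan is to argue by contradiction, using the strong compactness of minimizers of $\mathcal E$, the Weiss monotonicity formula, and the $\eps$-regularity (improvement-of-flatness) theorem of Alt and Caffarelli \cite{AlCa}. Fix $\eps=\eps_0>0$, $\tau$, and $b$, and suppose the lemma fails: then there are minimizers $u_k\in H^1(B_1)$ with $0\in\de\{u_k>0\}$, $W(u_k,0)=W(b)$, $\|u_k-b\|_{L^2(\de B_1)}<1/k$ and $W(u_k,1)-W(b)\le 1/k$, for which \eqref{e:par_ann} (with $\eps_0$) fails. Since $r\mapsto W(u_k,0,r)$ is nondecreasing and $\lim_{r\downarrow0}W(u_k,0,r)=W(u_k,0)=W(b)$, we have $W(b)\le W(u_k,0,r)\le W(u_k,1)\le W(b)+1/k$ for all $r\in(0,1)$. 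By \cite{AlCa}, after passing to a subsequence, $u_k\to u_\infty$ locally uniformly and in $H^1_{loc}(B_1)$, $\ind_{\{u_k>0\}}\to\ind_{\{u_\infty>0\}}$ in $L^1_{loc}$, and $\de\{u_k>0\}\to\de\{u_\infty>0\}$ locally in the Hausdorff distance, with $u_\infty$ a minimizer of $\mathcal E$ and $0\in\de\{u_\infty>0\}$. Passing to the limit in the above bound gives $W(u_\infty,0,r)\equiv W(b)$ on $(0,1)$, so by the equality case of the Weiss monotonicity formula $u_\infty$ is $1$-homogeneous.

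The next step is to show $u_\infty=b$; this is where the hypothesis $\|u_k-b\|_{L^2(\de B_1)}<1/k$ is essential, as it excludes $u_\infty$ being a different $1$-homogeneous minimizer of the same density (e.g. a rotation of $b$). From $W(u_k,1)-W(u_k,0,\tau/2)=\int_{\tau/2}^1\frac{2}{t^{d+1}}\int_{\de B_t}\big(\de_\nu u_k-\tfrac{u_k}{t}\big)^2\,d\HH^{d-1}\,dt\le 1/k$, integrating the elementary identity $\tfrac{u_k(t\theta)}{t}-u_k(\theta)=\int_1^t\tfrac1s\big(\de_\nu u_k-\tfrac{u_k}{s}\big)(s\theta)\,ds$ and using Cauchy--Schwarz, one obtains that $u_k(t\,\cdot)\to t\,b$ in $L^2(\de B_1)$ at rate $C(\tau,d)\,k^{-1/2}$, uniformly for $t\in(\tau/2,1)$ (here $b$ is identified with its trace on $\de B_1$). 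Hence $u_k\to b$ in $L^2(B_1\setminus B_{\tau/2})$, and comparing with the interior limit yields $u_\infty=b$ on $B_1\setminus B_{\tau/2}$, so $u_\infty\equiv b$ by $1$-homogeneity. (Alternatively one can invoke the standard compactness statement for minimizers with pinned density.)

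Finally, I would upgrade the convergence near the free boundary. Since $b$ has an isolated singularity at $0$, the set $\de\{b>0\}\cap\big(\overline{B_{1-\tau/2}}\setminus B_{\tau/2}\big)$ is a compact analytic hypersurface-with-boundary, and the blow-up of $b$ at each of its points is a halfplane solution, so $b$ is $\eps$-flat there at all sufficiently small scales, uniformly. Because $u_k\to b$ uniformly on $\overline{B_{1-\tau/2}}\setminus B_{\tau/2}$ and $\de\{u_k>0\}\to\de\{b>0\}$ in the Hausdorff distance, for $k$ large each point of $\de\{u_k>0\}$ in this region is $\eps_k$-flat at a fixed scale with $\eps_k\to0$; the improvement-of-flatness theorem of \cite{AlCa} then gives uniform $C^{1,\alpha}$ estimates for $\de\{u_k>0\}$ there, and a Schauder (Kinderlehrer--Nirenberg type) bootstrap for the overdetermined one-phase problem \eqref{e:stationa} upgrades this to convergence $\de\{u_k>0\}\to\de\{b>0\}$ in $C^{2,\alpha}$ (indeed analytically) on $B_{1-\tau/2}\setminus B_{\tau/2}$. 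Using the tubular neighborhood of the smooth compact manifold $\de\Omega_b$ to write each slice $\de\{u_k>0\}\cap\de B_r$ as $\graph_{\de\Omega_b}(\zeta(\cdot,r))$, we obtain (analytic) functions $\zeta(\cdot,r)$ with $\sup_{r\in(\tau,1-\tau)}\|\zeta(\cdot,r)\|_{C^{2,\alpha}}\to0$, contradicting the failure of \eqref{e:par_ann} for $k$ large.

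I expect the main difficulty to be this last step: combining the flatness improvement of \cite{AlCa} with boundary Schauder estimates for the free-boundary system \eqref{e:stationa} to pass from uniform/Hausdorff convergence all the way to $C^{2,\alpha}$ (in fact analytic) convergence of the free boundaries, uniformly on the annular region, and checking that the exponential-graph parametrization over $\de\Omega_b$ is well defined throughout $r\in(\tau,1-\tau)$. The identification $u_\infty=b$ via the quantitative Weiss formula together with the $L^2(\de B_1)$ hypothesis is a secondary, but still delicate, point.
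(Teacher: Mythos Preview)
Your proof is correct and follows essentially the same contradiction--compactness strategy as the paper: extract a limit minimizer, identify it with $b$, then apply Alt--Caffarelli improvement of flatness followed by a Kinderlehrer--Nirenberg/Jerison bootstrap to upgrade to $C^{2,\alpha}$ and reparametrize over $\partial\Omega_b$. The only minor difference is in the identification step: you propagate the $L^2(\partial B_1)$ closeness inward via the Weiss derivative formula (the same estimate the paper uses later in \eqref{e:dec_3}), whereas the paper passes to the limit directly in the trace and in $W(\cdot,1)$ to conclude $v|_{\partial B_1}=b|_{\partial B_1}$ and $W(v,1)=W(b)=\Theta_v(0)$; both routes are valid.
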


\begin{proof} Suppose the claim is not true, then there are sequences of minimizers $u_j\in H^{1}(B_1)$ and of numbers $\delta_j\to 0$, such that $0\in\partial\{u_j>0\}$,
\begin{equation}\label{e:paramcompactness}
\Theta_{u_j}(0)=W(b)\,,\qquad
\|u_j-b\|_{L^2(\de B_1)}<\delta_j
\qquad\mbox{and}\qquad
W(u_j,1)-W(b)\leq \delta_j\,,
\end{equation}
but such that $\de\{u_j>0\}\cap (B_{1-\tau} \setminus B_{\tau})$ does not satisfy \eqref{e:par_ann}. The condition \eqref{e:paramcompactness} implies that
\begin{align*}
\int_{B_{1}}|\nabla u_j|^2\,dx
	&\leq \delta_j+W(b)+\int_{\de B_{1}}u_j^2+2\,\omega_d
	\leq 3\delta_j+W(b)+\,2\int_{\de B_{1}}b^2+2\,\omega_d\leq C(d,b)\,.
\end{align*}
	Therefore the sequence $(u_j)_j$ is uniformly bounded in $H^1(B_1)$ and so up to subsequences it converges weakly in $H^1(B_{1})\cap L^2(\de B_1)$ to a function $v\in H^1(B_{1})$. Moreover, the minimality of $u_j$ implies that the convergence is $H^1(B_1)$-strong and $|\{u_j>0\}\cap B_{1}|\to |\{v>0\}\cap B_{1}|$ (see for instance \cite[Theorem 9.1]{GuyTo} or \cite{AlCa}). 
Then we have
\begin{gather}
\|v-b\|_{L^2(\de B_1)}= \lim_{j\to \infty}\|u_j-b\|_{L^2(\de B_1)}=0\ ,\qquad W(v,1)-W(b)= \lim_{j\to \infty}(W(u_j,1)-W(b))=0\,,\notag
\end{gather} 
and $\Theta_v(0)=W(b)$, so that $v\equiv b$ on $B_{1}$. Furthermore, by the uniform Lipschitz norm and non-degeneracy of $u_j$, it follows that $\de \{u_j>0\}$ converges to $\de \{b>0\}$ in the Hausdorff sense in $B_{1-\frac\tau2}$
(see for instance \cite{EnGuyTo}), and so by Alt-Caffarelli improvement of flatness (see \cite[Theorem 8.1]{AlCa}), we can conclude that for every $j$ sufficiently large there exists a $C^{1,\alpha}$ function $\zeta_j$ such that $\de\{u_j>0\}\cap (B_{1-\tau} \setminus B_{\tau}) = \graph(\zeta_j)$. Applying \cite{Jerison} (see also \cite[Theorem 2]{KiNi}), the regularity of $\zeta_j$ can be improved to $C^{2,\alpha}$, with small $C^{2,\alpha}$ norm of the graph. Finally using the smallness of the $C^{1,\alpha}$ norm, a simple reparametrization implies that, for every $j$ big enough,
$\de\{u_j>0\}\cap (B_{1-\tau} \setminus B_{\tau}) = \graph_{\partial \{b >0\}}(\zeta_j)
\ \mbox{and}\ 
\|\zeta_j\|_{C^{2,\alpha}}\to 0\,.$
\end{proof}

Before proving the main theorem we also need a preliminary lemma about blow-up sequences at comparable scales.

\begin{lemma}[Blow-ups at comparable scales]\label{l:comp_scales}
	Let $u\in H^1(B_1, \R_+)$ be a minimizer of $\mathcal E$ such that $0\in\partial\{u>0\}$. Then for every $\delta_2>0$, there exists $r_0=r_0(\delta_2)>0$ such that for every $0<r<r_0$ the following inequality holds
	\begin{equation}\label{e:clos_ann}
	\|u_{\rho}-u_{r}\|_{L^2(\de B_1)}\leq \delta_2 \qquad \mbox{for every }\rho\in\left[\frac r8,r\right]\,.
	\end{equation}
\end{lemma}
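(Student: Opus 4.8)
The plan is to argue by contradiction, exploiting the compactness of rescalings of a minimizer together with the one-homogeneity of blow-ups. Suppose the statement fails for some $\delta_2>0$: then there are sequences $r_j\downarrow 0$ and $\rho_j\in[r_j/8,r_j]$ with $\|u_{\rho_j}-u_{r_j}\|_{L^2(\de B_1)}>\delta_2$ for every $j$. Set $s_j:=\rho_j/r_j\in[\tfrac18,1]$. The change of variables $\rho_j x=r_j(s_jx)$ gives the elementary identity
$$u_{\rho_j}(x)=\frac{1}{s_j}\,u_{r_j}(s_jx)=(u_{r_j})_{s_j,0}(x),$$
so each $u_{\rho_j}$ is a rescaling of $u_{r_j}$ by a factor lying in the fixed interval $[\tfrac18,1]$.

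Next I would invoke the standard regularity and compactness facts for minimizers of $\mathcal{E}$ from \cite{AlCa}: since $u$ is Lipschitz on $B_{1/2}$ and $u(0)=0$, scale-invariance of the Lipschitz seminorm makes $(u_{r_j})_j$ uniformly Lipschitz and uniformly bounded on $\overline{B_2}$ for $j$ large; by Arzel\`a--Ascoli, after passing to a subsequence, $u_{r_j}\to u_0$ uniformly on $\overline{B_1}$ (in fact locally uniformly and strongly in $H^1_{\mathrm{loc}}$), and by the Weiss monotonicity formula \eqref{eq:weissmonotonicity} the limit $u_0$ is a one-homogeneous, global minimizer of $\mathcal{E}$, i.e.\ a blow-up of $u$ at $0$. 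Now I use the one-homogeneity of $u_0$: for $x\in\de B_1$ one has $s_j^{-1}u_0(s_jx)=u_0(x)$, hence
$$\big|u_{\rho_j}(x)-u_0(x)\big|=\frac{1}{s_j}\,\big|u_{r_j}(s_jx)-u_0(s_jx)\big|\le 8\,\|u_{r_j}-u_0\|_{L^\infty(\overline{B_1})},$$
and the right-hand side tends to $0$ because $s_jx\in\overline{B_1}$ and $u_{r_j}\to u_0$ uniformly on $\overline{B_1}$. Together with $\|u_{r_j}-u_0\|_{L^\infty(\de B_1)}\to 0$ and the continuous inclusion $L^\infty(\de B_1)\subset L^2(\de B_1)$, this yields $\|u_{\rho_j}-u_{r_j}\|_{L^2(\de B_1)}\to 0$, contradicting the choice of $\delta_2$ and of the sequences. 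Hence the lemma holds with some $r_0=r_0(\delta_2)>0$.

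The main --- and essentially only --- delicate point is that along the extracted subsequence \emph{both} $u_{r_j}$ and $u_{\rho_j}$ converge to the \emph{same} blow-up $u_0$; this is exactly where one-homogeneity of blow-ups (a consequence of Weiss monotonicity) enters in a crucial way. Without it, comparable scales could a priori converge to different cones; ruling this out \emph{for all} scales is precisely the content of Theorem \ref{t:main}, whereas here one gets it for free at comparable scales thanks to homogeneity. The remaining ingredients --- the uniform Lipschitz bound for the $u_{r_j}$ and the upgrade from uniform to $L^2(\de B_1)$ convergence of traces --- are routine.
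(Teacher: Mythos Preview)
Your proof is correct and follows essentially the same approach as the paper: argue by contradiction, pass to a subsequence converging to a one-homogeneous blow-up, and use homogeneity to force both comparable scales to converge to the same cone, then conclude via the triangle inequality. The only cosmetic difference is that the paper extracts the limit along the sequence $\rho_n$ and rescales up (working on $\partial B_{r_n/\rho_n}$), whereas you extract along $r_j$ and rescale down via the identity $u_{\rho_j}=(u_{r_j})_{s_j,0}$; both exploit one-homogeneity of the limit in the same essential way.
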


\begin{proof} Suppose, by contradiction, that there are sequences $r_n\downarrow0$ and 
$\ds \rho_n\in \left[\frac{r_n}8,r_n \right]$ such that 
	$$
	\int_{\de B_1}|u_{\rho_n}-u_{r_n}|^2> \delta_2^2 \qquad \mbox{for every }\,n\in\N\,.
	$$
	In particular, notice that $\ds1\leq \frac{r_n}{\rho_n}\leq 8$ for every $n\leq \N$, so that 
	$
	\ds 1\leq \liminf_{n\to \infty}\frac{r_n}{\rho_n}\leq \limsup_{n\to \infty}\frac{r_n}{\rho_n}\leq 8\,.
	$
	Now let $b\in H^1(\R^d)$ be a $1$-homogeneous minimizer of $\mathcal{E}$ such that $u_{\rho_n}(x):=\frac{u(\rho_n x)}{\rho_n}\to b$ locally uniformly in $\R^d$. Now since $\ds \lim_{n\to \infty} \int_{\de B_{\sfrac{r_n}{\rho_n}}}|u_{\rho_n}-b|^2=0$, 
	we can compute
	\begin{align}\label{e:cont_dyad_2}
	\lim_{n\to \infty}\int_{\de B_1}|u_{r_n}-b|^2
	&=\lim_{n\to \infty} \left(\frac{\rho_n}{r_n}\right)^{d-1}\,\int_{\de B_{\sfrac{r_n}{\rho_n}}}|u_{\rho_n}-b|^2=0\,,
	\end{align}
	where we used the $1$-homogeneity of $b$. 
	This means that $b$ is the blow up associated to the sequence $r_n$, and so by the triangular inequality
	$$
	0<\delta_2^2\leq\lim_{n\to \infty}\int_{\de B_1}|u_{\rho_n}-u_{r_n}|^2 \leq \lim_{n\to \infty}\left(2\int_{\de B_1}|u_{\rho_n}-b|^2+2\int_{\de B_1}|b-u_{r_n}|^2\right)=0\,,
	$$
	which gives the desired contradiction.
\end{proof}

\subsection{Proof of Theorem \ref{t:main}} 
We first recall the Weiss' monotonicity formula
\begin{equation}\label{e:weiss}
\frac{d}{d\rho}W(u_\rho)=\frac{d}{\rho}\big[W(z_\rho)-W(u_\rho)\big]+\frac{1}{\rho}\int_{\partial B_1}|x\cdot \nabla u_\rho-u_\rho|^2\,d\HH^{d-1},
\end{equation}
where $z_\rho$ is the $1$-homogeneous extension of the trace $c_\rho:=u_{\rho}|_{\de B_1}$. In particular, $r\mapsto W(u_r)$ is increasing, $\lim_{r\to0}W(u_r)=W(b)$ and for every $0<s<t\le 1$, we have the estimate 
 \begin{align}\label{e:dec_3}
	 \!\!\!\!\!\int_{\de B_1}\left| u_t-u_s \right|^2 \,d\,\HH^{d-1}
	 &\leq \int_{\de B_1}\left(\int_s^t\frac{1}{r}\left|  x\cdot \nabla u_r-u_r   \right| \,dr\right)^2\,d\HH^{d-1} \notag\\
	 &\leq  \int_{\de B_1} \left( \int_s^t r^{-1}\,dr\right)\left( \int_s^t r^{-1} \left|  x\cdot \nabla u_r-u_r   \right|^2   \,dr\right)\,d\HH^{d-1} \notag\\
	 &\leq (\log(t)-\log(s)) \int_s^t r^{-1} \int_{\de B_1} \left|  x\cdot \nabla u_r-u_r   \right|^2 \,d\HH^{d-1}  \,dr \notag\\
	 &\leq\log(t/s) \int_s^t \frac{d}{dr} \left[W(u_r)-W(b)\right] \,dr\le \log(t/s)\left(W(u_t)-W(b)\right).\end{align}
Next suppose that the logarithmic epiperimetric inequality 
\begin{equation}\label{e:epiinrange}
W(u_\rho)-W(b)\leq \big(1-\eps \left|W(z_\rho)-W(b)\right|^{\gamma} \big) (W(z_\rho)-W(b)) \quad \mbox{holds for  $\rho\in(s,r_0)$,}
\end{equation}
 then we can estimate
\begin{align}
\nonumber
\frac{d}{d\rho}\big(W(u_\rho)-W(b)\big)&
=\frac{d}{\rho}\left[W(z_\rho)-W(b)-W(u_\rho)+W(b)\right]+\frac{1}{\rho}\int_{\partial B_1}|x\cdot \nabla u_\rho-u_\rho|^2\,d\HH^{d-1}\\ 
&\geq \frac{d\eps}{\rho}\frac{\left(W(u_\rho)-W(b)\right)^{1+\gamma}}{1-\eps\left(W(u_\rho)-W(b)\right)^{\gamma}}\,+\frac{1}{\rho}\int_{\partial B_1}|x\cdot \nabla u_\rho-u_\rho|^2\,d\HH^{d-1} \notag\\
&\geq \frac{d\eps}{\rho}\left(W(u_\rho)-W(b)\right)^{1+\gamma}+\frac{1}{\rho}\int_{\partial B_1}|x\cdot \nabla u_\rho-u_\rho|^2\,d\HH^{d-1}\,,\,\label{e:dec_1}
\end{align}
which in particular gives that
\begin{equation}\label{e:dec_2}
\frac{d}{d\rho}\Big( -{\left(W(u_\rho)-W(b)\right)^{-\gamma}} - \gamma d\eps \log(\rho)\Big)\geq 0,\quad \text{for every }\rho\in \left(s,r_0\right).
\end{equation}
Thus integrating \eqref{e:dec_2}, we see that 
\begin{equation}\label{e:s_t_estimate}
 W(u_\rho)-W(b)\le \frac{1}{\left(-\eps\gamma d\log(\sfrac{\rho}r_0)\right)^{\frac1\gamma}} \,, \quad \text{for every }\rho\in \left(s,r_0\right).
\end{equation}
Next let $i \leq j$, be such that $s/r_0 \in [2^{-2^{j+1}}, 2^{-2^j})$ and $t /r_0\in [2^{-2^{i+1}}, 2^{-2^i})$. Then, using \eqref{e:dec_2} in \eqref{e:dec_3}, we calculate 
\begin{equation}\label{e:fanculo}
\begin{aligned}\|u_s - u_t\|_{L^2(\partial B_1)} \leq& \|u_s - u_{2^{-2^{j}}}\|_{L^2(\partial B_1)} + \|u_t - u_{2^{-2^{i+1}}}\|_{L^2(\partial B_1)} + \sum_{k = i+1}^{j-1} \|u_{2^{-2^{k+1}}}- u_{2^{-2^{k}}} \|_{L^2(\partial B_1)} \\
\stackrel{\eqref{e:dec_3}}{\leq}&  \sum_{k=i}^{j} 2^k e(2^{-2^{k}})
\stackrel{\eqref{e:s_t_estimate}}{\leq} C\sum_{k=i+1}^{j-1} 2^k 2^{-k/\gamma}
\stackrel{\gamma \in (0,1)}{\leq} C_{\gamma}2^{(i+1)\frac{\gamma-1}{\gamma}}\\
\leq &C_{\gamma} (-\log(t/r_0))^{\frac{\gamma-1}{\gamma}}.
\end{aligned}
\end{equation}

Next we claim that \eqref{e:epiinrange} holds for $s=0$, so that \eqref{e:fanculo} shows that $(u_r)_r$ is a Cauchy sequence in $L^2(\partial B_1)$ and since homogeneous functions depend only on their traces, it proves the uniqueness of blow-up at the point.

\noindent Indeed, thanks to the assumption that $b$ is a blow-up of $u$ at $0$, we can choose $0<\delta_2<\delta_1/2$ and $r=r(\delta_1,\delta_2, \gamma)>0$ in such a way that
\begin{equation}\label{e:choice_const}
(W(u_{2r})-W(b))^{\frac{1-\gamma}2}+2\delta_2 +\|u_{2r}-b\|_{L^2(\de B_1)}+C_\gamma\left(-\log(r/r_0)\right)^{\frac{\gamma-1}{\gamma}}\leq \delta_1\,.
\end{equation}
Next suppose $r > \rho>0$ is the first radius at which \eqref{e:epiinrange} fails. Thanks to \eqref{e:choice_const}, we can use Lemma \ref{l:comp_scales} to apply Lemma \ref{l:ann_par} with $\tau=1/2$, so that the hypothesis of Theorem \ref{t:epi_smooth} are satisfied and \eqref{e:epiinrange} holds in the range $(r/2, r)$. This implies that $\rho<r/2$. On the other hand we can use \eqref{e:dec_3} in $[3\rho/2, r]$ to get
\begin{align*}
\|u_{3\rho/2}-b\|_{L^2(\de B_1)}
	&\leq \|u_{3\rho/2}-u_{r}\|+\|u_{r}-u_{2r}\|_{L^2(\de B_1)}+\|b-u_{2r}\|_{L^2(\de B_1)}\\
	&\leq \delta_2+C_\gamma\left(-\log(r/r_0)\right)^{\frac{\gamma-1}{\gamma}}+\|b-u_{2r}\|_{L^2(\de B_1)}\leq \delta_1-\delta_2\,,
\end{align*}
so that once again we can use Lemma \ref{l:comp_scales} with $\tau = 1/2$ to get $\|u_{3\rho/4} - b\|_{L^2} \leq \delta_1$. We then apply Lemma \ref{l:ann_par}, so that the hypothesis of Theorem \ref{t:epi_smooth} are satisfied and \eqref{e:epiinrange} holds in the range $(3\rho/4, r)$, which is a contradiction with the definition of $\rho>0$. This implies $\rho=0$ and concludes the claim.

Finally, let $b$ be the unique blow-up. Lemma \ref{l:ann_par}, the decay of the $L^2$ norm and the Weiss' boundary adjusted energy $W(u_r)$ imply that $\de \{ u>0 \}\cap B_r$ is a graph over $\de \{ b>0 \}\cap B_r$. Moreover, again by Lemma \ref{l:ann_par} we have that $\de \{ u>0 \}\cap B_r$ is  a $C^1$ graph over $\de \{ b>0 \}\cap B_r$, that is the graph over $\de \{ b>0 \}\cap B_1$ associated to $\de \{u_r>0\}\cap B_1\setminus B_{1/8}$ converges to zero in $C^1$ norm as $r\to0$. This convergence can be improved to $C^{1,\log}$ by a standard argument that we sketch for the readers' convenience. Indeed, since $\de \{u_r>0\}\cap B_1\setminus B_{1/8}$ is a smooth graph with controlled $C^{2,\alpha}$ norm, the (log-) epiperimetric inequality holds at a uniform scale at every point  $x_0\in \de \{u_r>0\}\cap B_{1/2}\setminus B_{1/4}$. Thus, the oscillation of the normals $|\nu_{x_0,r}-\nu_{y_0,s}|$, where $x_0\in \de \{u_r>0\}\cap B_{1/2}\setminus B_{1/4}$ and $y_0\in \de \{u_s>0\}\cap B_{1/2}\setminus B_{1/4}$ is controlled by a power of $r_0$ (for some $0<r_0<1$) and the $L^2$-distance $\|(u_r)_{x_0,r_0}-(u_s)_{y_0,r_0}\|_{L^2(\partial B_1)}$. Now this last distance has a logarithmic decay due to the logarithmic decay of $\|u_r-b\|_{L^2(\partial B_1)}$ proved above, which implies the $C^{1,\log}$ convergence of the graphs $\de \{u_r>0\}\cap B_{1/2}\setminus B_{1/4}$. 
\qed

The proof of Theorem \ref{t:main2}, the integrable case, follows similarly, but is simpler and mostly standard so we will omit it. Let us just remark out that we must still take care to show that the ``closeness" assumptions of Theorem \ref{t:epi_smooth} are satisfied on all scales. However, this argument works in essentially the same way in the integrable and non-integrable setting.  

\section{The Index of the De Silva-Jerison cone in the sphere}\label{s:DeSiJecone}

In this section we prove that the De Silva-Jerison cone $\mathcal C_{\nu, \theta_0}$ satisfies the conditions of Definition \ref{d:integrability}: namely that the elements of the kernel of $\delta^2\mathcal F(0)$ are generated by rotations and that each perturbation in $\mathrm{index}(\delta^2\mathcal F(0))$ integrates to zero along $\partial \Omega_{b_{\nu, \theta_0}}$. This completes the proof of Corollary \ref{c:DeSiJe}. 

To do so, we will produce an eigenbasis of deformations and show that, except for the deformations infinitesimally generated by rotations, each associated eigenvalue is non-zero. Furthermore, the perturbation generated by a constant will be an element of the eigenbasis and we will check that it is associated to a positive eigenvalue. This implies that all the other elements of the eigenbasis (including those in the index) integrate to zero on $\partial \Omega_{b_{\nu, \theta_0}}$. We note that the positivity of the constant perturbation requires a computational check, which we do explicitly for $d = 7$. A more general argument verifies the inequality for $d \geq 21$ and one can check the numerics for $8\leq d \leq 20$ via the procedure outlined below for $d= 7$.

Throughout this section we will write the points of the sphere $\partial B_1=\mathbb S^{d-1}$ in the spherical coordinates $(\theta,\phi)$, where $\phi\in \mathbb S^{d-2}$ and $\theta\in\left[0,\pi\right]$. Thus, the trace of the De Silva-Jerison cone on the sphere is simply given by $\ds\Omega_{b_{\nu,\theta_0}}=\left\{(\theta,\phi)\in \mathbb{S}^{d-1}\ :\ \frac\pi2-\theta_0<\theta<\frac\pi2+\theta_0\right\}.$

\subsection{A basis of eigenfunctions}
Let $\{\psi_j\}$ be the eigenfunctions of the Laplace-Beltrami operator on $\mathbb S^{d-2}$ (i.e. the spherical harmonics in dimension $d-1$). Then $\zeta^{\pm}_j$ is an orthogonal basis of $L^2(\partial \Omega_{b_{\nu, \theta_0}})$ and $H^{1/2}(\partial \Omega_{b_{\nu, \theta_0}})$, where \begin{equation}\label{defofzetapm}\begin{aligned} \zeta_j^+(\pi/2 \pm \theta_0, \varphi) =&- \psi_j(\varphi)\qquad\text{and}\qquad
\zeta_j^-(\pi/2 \pm \theta_0, \varphi) =\mp\, \psi_j(\varphi).\end{aligned}
\end{equation}
For $j > 1$, we define 
\begin{equation}\label{e:solutionpmj}
	\ds -\Delta u_j^\pm=(d-1)u_j^\pm\quad\text{in}\quad \Omega_{b_{\nu, \theta_0}},\qquad
	u_j^\pm=-\zeta^{\pm}_j \quad\text{on}\quad \partial\Omega_{b_{\nu, \theta_0}},\qquad\int_{\Omega_{b_{\nu, \theta_0}}}bu_j^\pm=0.
	\end{equation}
The case $j =1$ is slightly more complicated, as the variation in the direction of $\zeta_j^+$ (which can be geometrically interpreted as increasing the opening of the cone), changes the measure and the first eigenvalue of the domain to first order. Therefore, we define 
	\begin{equation}\label{e:solution+1}
	\ds -\Delta u_1^+=(d-1)u_1^++\eta b\quad\text{in}\quad \Omega_{b_{\nu, \theta_0}},\qquad
	u_1^+=-\zeta^+_1\quad\text{on}\quad \partial\Omega_{b_{\nu, \theta_0}},\qquad\int_{\Omega_{b_{\nu, \theta_0}}}bu_1^+=0,
	\end{equation}
	\begin{equation}\label{e:solution-1}
	\ds -\Delta u_1^-=(d-1)u_1^-\quad\text{in}\quad \Omega_{b_{\nu, \theta_0}},\qquad
	u_1^-=-\zeta^-_1 \quad\text{on}\quad \partial\Omega_{b_{\nu, \theta_0}},\qquad\int_{\Omega_{b_{\nu, \theta_0}}}bu_1^-=0,
	\end{equation}
	where 
	$$
	\eta:=\frac1{\kappa^2_0}\frac{\HH^{d-2}(\partial\Omega_{b_{\nu, \theta_0}})}{\sqrt{\HH^{d-2}(\mathbb{S}^{d-2})}}.
	$$
Recall from Appendix \ref{s:app}, Subsection \ref{sub:variations_in_zero}, that solutions to \eqref{e:solutionpmj}, \eqref{e:solution+1}, \eqref{e:solution-1} exist and are unique. 
In particular, we can write $u_j^\pm$ explicitly for $j > 1$ (this formula also holds for $u_1^-$) by separating the variables as $u_j^\pm(\theta, \varphi) = \psi_j(\varphi) f^{\pm}_j(\theta)$, where $f^{\pm}_j(\theta)$ are defined by 
\begin{equation}\label{e:solution+f}
	\begin{cases}
	\begin{array}{cc}
	\ds -\frac{1}{\sin^{d-2}\theta}\frac{\partial}{\partial\theta}\left(\sin^{d-2}\theta\,\frac{\partial f_j^+}{\partial\theta} \right) =\left(-\frac{\lambda_j^{d-2}}{\sin^2\theta}+(d-1)\right)f_j^+\quad\text{for}\quad \frac\pi2-\theta_0<\theta<\frac\pi2+\theta_0,\\
	\ds f_j^+\left(\frac\pi2-\theta_0\right)=f_j^+\left(\frac\pi2+\theta_0\right)=1,\\
	\end{array}
	\end{cases}
	\end{equation}
	\begin{equation}\label{e:solution-f}
	\begin{cases}
	\begin{array}{cc}
	\ds -\frac{1}{\sin^{d-2}\theta}\frac{\partial}{\partial\theta}\left(\sin^{d-2}\theta\,\frac{\partial f_j^-}{\partial\theta} \right) =\left(-\frac{\lambda_j^{d-2}}{\sin^2\theta}+(d-1)\right)f_j^-\quad\text{for}\quad \frac\pi2-\theta_0<\theta<\frac\pi2+\theta_0,\\
	\ds -f_j^-\left(\frac\pi2-\theta_0\right)=f_j^-\left(\frac\pi2+\theta_0\right)=1.\\
	\end{array}
	\end{cases}
	\end{equation}
	Above, and throughout, $\lambda_j^{d-2}$ refers to the $j$-th eigenvalue of the Laplace-Beltrami operator on $\mathbb S^{d-2}$, counted with multiplicity. 
Note that solutions to \eqref{e:solution+f} and \eqref{e:solution-f} are unique and in fact minimize (under the respective boundary conditions) the functionals 
	$$J_j(f)=\int_{\pi/2-\theta_0}^{\pi/2+\theta_0}\left(|f'(\theta)|^2+\frac{\lambda_j^{d-2}}{\sin^2\theta}f^2(\theta)-(d-1)f^2(\theta)\right)\sin^{d-2}\theta\,d\theta.$$
In order to show that the deformations $\zeta_j^{\pm}$ diagonalize $\delta^2\mathcal F(0)$ (and thus satisfy \eqref{e:diagonalization})
we recall that by \eqref{e:var''0} we have 
\begin{equation}\label{f''onDSJ}
\delta^2\mathcal F(0)[\xi_1, \xi_2] = -2(d-2)\tan(\theta_0)\int_{\de \Omega_{b_{\nu, \theta_0}}} \xi_1\xi_2 \, \,d\HH^{d-2}
			+2\,\int_{\de \Omega_{b_{\nu, \theta_0}}} \xi_1\,\,T\xi_2\,d\HH^{d-2}.
\end{equation}
Applying \eqref{f''onDSJ} to $\zeta_j^{\pm}$ and $\zeta_k^{\pm}$ and integrating by parts we get 
\begin{equation}\label{f''onDSJincoordinates}
\begin{aligned}
\frac12\delta^2 \mathcal F(0)[\zeta_j^{\pm}, \zeta_k^{\pm}] &= \int_{\partial \Omega_{b_{\nu, \theta_0}}} u_j^\pm \partial_\nu u_k^\pm \,\, d\HH^{d-2} -(d-2)\tan(\theta_0)\int_{\de \Omega_{b_{\nu, \theta_0}}} \zeta_j^{\pm}\zeta_k^{\pm} \, \,d\HH^{d-2} \\
&=\big \langle u_j^\pm, u_k^\pm\big\rangle -(d-2)\tan(\theta_0)\int_{\de \Omega_{b_{\nu, \theta_0}}} \zeta_j^{\pm}\zeta_k^{\pm} \, \,d\HH^{d-2},
\end{aligned}
\end{equation}
where for simplicity we have set $\ds\big \langle u_j^\pm, u_k^\pm\big\rangle:=\int_{\Omega_{b_{\nu, \theta_0}}} \left(\nabla u_j^\pm \cdot \nabla u_k^\pm - (d-1)u_j^\pm u_k^{\pm}\right)\,d\HH^{d-1}$.

We now claim that the family of functions $\{\zeta_j^{\pm}\}_{j\in\N}$ is orthogonal with respect to $\delta^2\mathcal F(0)$. Indeed, as $\zeta_j^{\pm}$ are orthogonal in $L^2(\partial \Omega_{b_{\nu, \theta_0}})$, by \eqref{f''onDSJincoordinates} it suffices to establish that the $u_j^{\pm}$ are orthogonal in $H^1(\Omega_{b_{\nu, \theta_0}})$. 
Indeed, that $\big \langle u_j^+, u_k^-\big\rangle = 0$ is trivial; the $u_j^+$ are even functions of $\theta$ across the equator, whereas the $u_k^-$ are odd functions of $\theta$ across the equator (this can be seen in \eqref{e:solutionpmj}, \eqref{e:solution+1},\eqref{e:solution-1}). 
Then $\big \langle u_j^+, u_k^+\big\rangle = 0$ and $\big \langle u_j^-, u_k^-\big\rangle = 0$ follow from the separation of variables; each is equal to a function in $\theta$ times a function in $\varphi$ and for $k \neq j$ the functions in $\varphi$ are orthogonal in $\mathcal H^1(\mathbb S^{d-2})$. Since the integrals split, we get the desired orthogonality. Moreover, a standard density argument gives that the family $\{\zeta_j^{\pm}\}_{j\in\N}$ is complete that is, it generates $H^{1/2}(\partial\Omega_{b_{\nu,\theta_0}})$. 

\subsection{Integrability through rotations: proof of Corollary \ref{c:DeSiJe}}

In order to prove the integrability trough rotations of the De Silva-Jerison type cones, and so Corollary \ref{c:DeSiJe}, it suffices to estimate $\delta^2\mathcal F(0)[\zeta^\pm_j, \zeta^{\pm}_j]$ and show that $\zeta_1^+$ is a positive direction, since it is the only one that changes $\lambda$ at first order.

We start by proving that there are $d$ perturbations which correspond to negative eigenvalues (i.e. that the De Silva-Jerison cone has index $d$ in the sphere).  

\begin{prop}[Index of De Silva-Jerison cone]
The eigenvalues associated to the eigenfunctions $\zeta_j^+$, for $2 \leq j \leq d$, and $\zeta_1^-$ are strictly negative.	
\end{prop}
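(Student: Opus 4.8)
The plan is to solve the one–dimensional problems \eqref{e:solution+f}, \eqref{e:solution-f} in closed form for the relevant modes and then read off the sign of \eqref{f''onDSJincoordinates}. The key observation is that $\frac{x_i}{|x|}$ (for $1\le i\le d-1$) and $\frac{x\cdot\nu}{|x|}$ are degree–one spherical harmonics on $\mathbb S^{d-1}$, hence eigenfunctions with eigenvalue $d-1$. Separating variables in the equation $-\Delta_{\mathbb S^{d-1}}(\psi_j\,f)=(d-1)\psi_j\,f$ then shows that $\sin\theta$ solves the radial equation \eqref{e:solution+f} with $\lambda_j^{d-2}=d-2$ (which is exactly the value for $2\le j\le d$), and that $\cos\theta$ solves \eqref{e:solution-f} with $\lambda_1^{d-2}=0$. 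Matching the boundary data in \eqref{e:solution+f} and \eqref{e:solution-f} and invoking the uniqueness recalled right after \eqref{e:solution-f}, we get
\[
f_j^+(\theta)=\frac{\sin\theta}{\cos\theta_0}\quad(2\le j\le d),\qquad f_1^-(\theta)=-\frac{\cos\theta}{\sin\theta_0}.
\]

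Next I would feed these into \eqref{f''onDSJincoordinates}. Two simplifications make the computation clean: first, since $f_j^\pm$ solves the radial ODE, an integration by parts collapses $\langle u_j^\pm,u_j^\pm\rangle=J_j(f_j^\pm)$ to a pure boundary term $\big[f_j^\pm\,(f_j^\pm)'\,\sin^{d-2}\theta\big]_{\pi/2-\theta_0}^{\pi/2+\theta_0}$; second, $\int_{\partial\Omega_{b_{\nu,\theta_0}}}(\zeta_j^\pm)^2\,d\HH^{d-2}=2(\cos\theta_0)^{d-2}$, because $\partial\Omega_{b_{\nu,\theta_0}}$ is two copies of $\mathbb S^{d-2}$ of radius $\cos\theta_0$ and $\psi_j$ is $L^2$–normalized. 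Substituting $f_j^+=\sin\theta/\cos\theta_0$ and using $\tan\theta_0(\cos\theta_0)^{d-2}=\sin\theta_0(\cos\theta_0)^{d-3}$ gives, for $2\le j\le d$,
\[
\tfrac12\,\delta^2\mathcal F(0)[\zeta_j^+,\zeta_j^+]=-2\sin\theta_0\cos^{d-3}\theta_0-2(d-2)\sin\theta_0\cos^{d-3}\theta_0=-2(d-1)\sin\theta_0\cos^{d-3}\theta_0<0,
\]
so these $d-1$ modes are handled with no further input. The same computation with $f_1^-=-\cos\theta/\sin\theta_0$ yields
\[
\tfrac12\,\delta^2\mathcal F(0)[\zeta_1^-,\zeta_1^-]=\frac{2\cos^{d-3}\theta_0}{\sin\theta_0}\big(\cos^2\theta_0-(d-2)\sin^2\theta_0\big),
\]
so the assertion for $\zeta_1^-$ is exactly equivalent to $\cot^2\theta_0<d-2$, i.e.\ $\sin^2\theta_0>\tfrac1{d-1}$.

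The remaining point, and the one that genuinely requires work, is the inequality $\cot^2\theta_0<d-2$: it says that the opening $\theta_0$ of the De Silva–Jerison cone is not too small, and it follows from the fact that $b_{\nu,\theta_0}$ is a $1$-homogeneous solution, which by Remark \ref{oss:1hom} forces $\lambda_1(\Omega_{\theta_0})=d-1$. Indeed the spherical band $\Omega_{\theta_0}=\{\,|\theta-\tfrac\pi2|<\theta_0\,\}$ has first Dirichlet eigenvalue strictly decreasing in $\theta_0$, and a weighted Poincaré/Barta estimate for the radial eigenvalue problem on the band of half-width $\theta_0^*=\arcsin(1/\sqrt{d-1})$ shows $\lambda_1(\Omega_{\theta_0^*})>d-1$; hence the $\theta_0$ with $\lambda_1=d-1$ satisfies $\theta_0>\theta_0^*$, which is the claim. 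Alternatively this can be checked directly from the explicit form of the radial equation, exactly as in the treatment of the positivity of the $\zeta_1^+$-eigenvalue (explicitly for small $d$, and by a uniform argument for $d$ large). I expect this opening-angle estimate to be the only non-routine ingredient; everything else is bookkeeping around the two closed-form solutions above.
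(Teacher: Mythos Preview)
Your treatment of the modes $\zeta_j^+$, $2\le j\le d$, is exactly the paper's: write down $u_j^+(\theta,\varphi)=\psi_j(\varphi)\sin\theta/\cos\theta_0$ and evaluate \eqref{f''onDSJincoordinates} directly.

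For $\zeta_1^-$, your explicit computation is correct and reduces the claim to $\sin^2\theta_0>1/(d-1)$; this is precisely what the paper's unspecified ``simple computation'' would amount to. The difference is in how that opening-angle inequality is handled. You leave it as a sketch; your Barta idea can in fact be completed by taking the test function $v(\theta)=\sin\theta-\cos\theta_0^\ast$ on the band of half-width $\theta_0^\ast=\arcsin(1/\sqrt{d-1})$, since then $-\Delta_{\partial B_1}v-(d-1)v=(d-1)\cos\theta_0^\ast-(d-2)/\sin\theta\ge0$ with strict inequality in the interior, forcing $\lambda_1(\Omega_{\theta_0^\ast})>d-1$. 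The paper, however, bypasses the opening-angle estimate altogether. Immediately after this proposition it records the monotonicity \eqref{thejsincrease}: since $f_j^-$ minimizes $J_j$ under the odd boundary data and $\lambda_1^{d-2}=0<d-2=\lambda_2^{d-2}$, one has $\delta^2\mathcal F(0)[\zeta_1^-,\zeta_1^-]<\delta^2\mathcal F(0)[\zeta_2^-,\zeta_2^-]$. In the next proposition the right-hand side is shown to vanish by recognizing that $u_j^-=-\kappa_0\psi_j\,\partial_\theta\phi_0$ for $2\le j\le d$, i.e.\ that $\zeta_j^-$ is the infinitesimal generator of an ambient rotation of $\mathbb S^{d-1}$. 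This yields the negativity of the $\zeta_1^-$-eigenvalue with no input on $\theta_0$ at all --- the inequality $\sin^2\theta_0>1/(d-1)$ then becomes a corollary rather than a hypothesis --- and it interlocks with the kernel computation needed anyway for integrability. Your direct route is self-contained but carries the extra eigenvalue estimate as overhead.
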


\begin{proof}
To compute the energy of $u_j^+$, for $2 \leq j \leq d$, note that 
\begin{equation}\label{whoisujplus} 
u_j^+(\theta,\varphi) \equiv \psi_j(\varphi)\frac{\sin(\theta)}{\cos(\theta_0)}, \quad 2 \leq j \leq d 
\end{equation} 
satisfies \eqref{e:solutionpmj} (to see this, recall that $\lambda_j^{\mathbb S^{d-2}} = (d-2)$, for $2 \leq j \leq d$, and that $-\Delta \sin(\theta) = (d-1)\sin(\theta) - \frac{d-2}{\sin^2(\theta)} \sin(\theta)$). Plugging this into \eqref{f''onDSJincoordinates}, we get
\begin{equation}\label{energyofthebadrotations}
\begin{aligned} 
\delta^2\mathcal F(0)[\zeta_j^+, \zeta_j^+] =& 4\cos^{d-2}(\theta_0)\left(\int_{\mathbb S^{d-2}} u_j^+\partial_\nu u_j^+\ d\sigma - (d-2)\tan(\theta_0)\right)\\ =& 4\cos^{d-2}(\theta_0)\left(-\tan(\theta_0)-(d-2)\tan(\theta_0)\right) < 0, \ \text{ for }\  2 \leq j \leq d.
\end{aligned}
\end{equation}
To see that $\delta^2\mathcal F(0)[\zeta_1^-, \zeta_1^-] < 0$, note that $u_1^- = -\frac{1}{\sqrt{\mathcal H^{d-2}(\mathbb S^{d-2})}}\frac{\cos(\theta)}{\sin(\theta_0)}$. Then a simple computation gives the result.
\end{proof}

To show that $\zeta_1^-$ is a negative direction one could also use a more general principle, which will simplify the rest of the proof. Note that (except for $\zeta_1^+$) 
\begin{equation}\label{e:fromqtoj} 
\delta^2\mathcal F(0)[\zeta_j^\pm, \zeta_j^\pm] = 2J_j(f^\pm_j) - 4(d-2)\cos^{d-2}(\theta_0)\tan(\theta_0), 
\end{equation}

By the definition of $J_j$ (and the fact that $\lambda_j^{\mathbb S^{d-2}}$ increases as $j \rightarrow \infty$) for any function $w\in H^1$ we have that $J_{j+1}(w) \geq J_j(w)$ (with strict inequality if $\lambda_{j+1}^{\mathbb S^{d-2}} > \lambda_j^{\mathbb S^{d-2}}$). Since $f_j^\pm$ minimizes $J_j$ with the respective boundary conditions, 
we get $J_{j+1}(f_{j+1}^\pm) \geq J_j(f_{j+1}^\pm) \geq J_j(f_j^\pm)$, which gives 
\begin{equation}\label{thejsincrease}
\begin{aligned} 
\delta^2\mathcal F(0)[\zeta_{j+1}^+, \zeta_{j+1}^+] \geq\delta^2\mathcal F(0)[\zeta_{j}^+, \zeta_{j}^+],\ \text{ for all }\ j \geq 2\,,\\ 
\delta^2\mathcal F(0)[\zeta_{j+1}^-, \zeta_{j+1}^-] \geq\delta^2\mathcal F(0)[\zeta_{j}^-, \zeta_{j}^-], \ \text{ for all }\  j \geq 1\,,
\end{aligned}
\end{equation}
where the strict inequalities hold if and only if $\lambda_{j+1}^{\mathbb S^{d-2}} > \lambda_j^{\mathbb S^{d-2}}$. 

As a consequence of \eqref{thejsincrease}, the following proposition will conclude the proof of Corollary \ref{c:DeSiJe}.  

\begin{prop}[Kernel of De Silva-Jerison cone]\label{thm: eigenvaluesofQ}
Let $\xi \in \{\zeta^{\pm}_j\}_{\pm, j \in \mathbb N}$. Then $\delta^2\mathcal F(0)[\xi, \xi] = 0$ if and only if $\xi$ is a linear combination of $\zeta^-_j$ for $2 \leq j \leq d$. Furthermore, $\delta^2 \mathcal F(0)[\zeta_1^+, \zeta_1^+] > 0$. 
\end{prop}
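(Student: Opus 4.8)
The plan is to analyze $\delta^2\mathcal F(0)[\zeta_j^{\pm},\zeta_j^{\pm}]$ mode by mode, using the identity \eqref{e:fromqtoj} together with the monotonicity \eqref{thejsincrease} in $j$. Since the family $\{\zeta_j^{\pm}\}$ diagonalizes $\delta^2\mathcal F(0)$, it suffices to determine the sign of each diagonal entry. First I would handle the $\zeta_j^+$ family: by the preceding proposition the eigenvalues for $2\le j\le d$ are strictly negative, and by \eqref{thejsincrease} the sequence $j\mapsto \delta^2\mathcal F(0)[\zeta_j^+,\zeta_j^+]$ is non-decreasing and is \emph{strictly} increasing precisely when $\lambda_{j+1}^{\mathbb S^{d-2}}>\lambda_j^{\mathbb S^{d-2}}$. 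Since $\lambda_{d+1}^{\mathbb S^{d-2}}=2(d-1)>d-2=\lambda_d^{\mathbb S^{d-2}}$, we get $\delta^2\mathcal F(0)[\zeta_j^+,\zeta_j^+]>\delta^2\mathcal F(0)[\zeta_d^+,\zeta_d^+]$ for all $j\ge d+1$; thus these are the only candidates besides $j=1$ for a non-negative eigenvalue, and I must show they are all \emph{strictly positive}. The key point is that once the constant $\lambda_j^{d-2}/\sin^2\theta$ in the potential of $J_j$ exceeds the threshold needed to dominate the $-(d-1)$ term on $(\pi/2-\theta_0,\pi/2+\theta_0)$, the quadratic form $J_j$ becomes positive definite, so $J_j(f_j^+)>0$; combined with smallness considerations (or a direct estimate comparing $2J_j(f_j^+)$ against $4(d-2)\cos^{d-2}(\theta_0)\tan(\theta_0)$) one concludes positivity. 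One must be slightly careful that the constant term $-4(d-2)\cos^{d-2}(\theta_0)\tan(\theta_0)$ in \eqref{e:fromqtoj} is negative, so positivity of $J_j(f_j^+)$ alone is not enough for large $\theta_0$ — here the growth of $\lambda_j^{d-2}$ forces $J_j(f_j^+)\to\infty$, which settles all but finitely many $j$, and the finitely many remaining ones are checked by the monotonicity together with the explicit $j=d+1$ computation.

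Next I would treat the $\zeta_j^-$ family. By the previous proposition $\delta^2\mathcal F(0)[\zeta_1^-,\zeta_1^-]<0$. For $2\le j\le d$ one has $\lambda_j^{\mathbb S^{d-2}}=d-2$, so $f_j^-$ solves the same ODE \eqref{e:solution-f} with the same potential; using the explicit odd solution $u_j^-(\theta,\varphi)=\psi_j(\varphi)\,\cos(\theta)/\cos(\theta_0)$ (note $-\Delta\cos\theta=(d-1)\cos\theta-\frac{d-2}{\sin^2\theta}\cos\theta$, so this satisfies \eqref{e:solutionpmj}), plug into \eqref{f''onDSJincoordinates}: the boundary term $\langle u_j^-,u_j^-\rangle$ reduces to a computation of $\partial_\nu u_j^-$ on $\partial\Omega$, and one finds $\delta^2\mathcal F(0)[\zeta_j^-,\zeta_j^-]=0$ for $2\le j\le d$. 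These are exactly the kernel directions — geometrically they are the infinitesimal rotations of $\Omega_{b_{\nu,\theta_0}}$ that tilt the axis $\nu$, of which there are $d-1$ independent ones (matching $2\le j\le d$). For $j\ge d+1$, since $\lambda_{d+1}^{\mathbb S^{d-2}}=2(d-1)>d-2$, the strict monotonicity \eqref{thejsincrease} gives $\delta^2\mathcal F(0)[\zeta_j^-,\zeta_j^-]>\delta^2\mathcal F(0)[\zeta_d^-,\zeta_d^-]=0$, so all higher $\zeta_j^-$ are positive directions. This establishes that $\delta^2\mathcal F(0)[\xi,\xi]=0$ exactly for $\xi$ a linear combination of $\zeta_j^-$, $2\le j\le d$.

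It remains to show $\delta^2\mathcal F(0)[\zeta_1^+,\zeta_1^+]>0$, which is the only case not covered by the ODE-comparison argument because of the extra term $\eta b$ in \eqref{e:solution+1} (the opening mode changes $\lambda$ and the measure to first order). Here I would use \eqref{f''onDSJincoordinates} directly: $\frac12\delta^2\mathcal F(0)[\zeta_1^+,\zeta_1^+]=\langle u_1^+,u_1^+\rangle-(d-2)\tan(\theta_0)\,\mathcal H^{d-2}(\partial\Omega_{b_{\nu,\theta_0}})$, and expand $\langle u_1^+,u_1^+\rangle$ using the PDE for $u_1^+$ and the orthogonality $\int_{\Omega} b\,u_1^+=0$. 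One separates variables $u_1^+(\theta,\varphi)=f_1^+(\theta)$ (since $\psi_1$ is constant), and reduces everything to a one-dimensional ODE on $(\pi/2-\theta_0,\pi/2+\theta_0)$ with the inhomogeneous term $\eta b$; the quantity becomes an explicit (if unwieldy) function of $d$ and $\theta_0$. The main obstacle — and the genuinely non-soft part of the whole proposition — is verifying this last inequality, which is a transcendental inequality in $d$ and $\theta_0$ with $\theta_0$ itself determined implicitly by the De Silva--Jerison construction. Following the strategy announced in the introduction of Section \ref{s:DeSiJecone}, I would: (i) do the computation explicitly for $d=7$, reducing to a numerical check; (ii) give a monotonicity/asymptotic argument valid for $d\ge 21$ showing the inequality holds with room to spare; and (iii) note that for $8\le d\le 20$ the same explicit procedure as in $d=7$ verifies the numerics. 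Because $\zeta_1^+$ is the unique direction affecting $\lambda$ at first order and every other basis element either is a rotation (kernel) or integrates to zero against a constant by $L^2$-orthogonality with $\zeta_1^+$ (the constant is $\zeta_1^+$ up to normalization), the positivity of $\delta^2\mathcal F(0)[\zeta_1^+,\zeta_1^+]$ together with the kernel identification is exactly what Definition \ref{d:integrability} requires, completing the proof of Corollary \ref{c:DeSiJe}.
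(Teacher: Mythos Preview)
Your overall architecture matches the paper's: you use the monotonicity \eqref{thejsincrease} to reduce everything to checking $\zeta_j^-$ for $2\le j\le d$ (kernel), $\zeta_{d+1}^+$ (positive), and $\zeta_1^+$ (positive, via numerics/asymptotics). However there is a concrete error in your computation of the kernel directions.

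You propose $u_j^-(\theta,\varphi)=\psi_j(\varphi)\cos\theta/\cos\theta_0$ for $2\le j\le d$, justified by the identity ``$-\Delta\cos\theta=(d-1)\cos\theta-\frac{d-2}{\sin^2\theta}\cos\theta$''. This identity is false: a direct computation of the radial part gives $-\Delta_{\partial B_1}\cos\theta=(d-1)\cos\theta$ with no $\sin^{-2}\theta$ term (you may be pattern-matching from the correct formula $-\Delta\sin\theta=(d-1)\sin\theta-\frac{d-2}{\sin^2\theta}\sin\theta$ used for $u_j^+$). Consequently, for $\lambda_j^{d-2}=d-2$ the function $\psi_j\cos\theta$ satisfies $-\Delta(\psi_j\cos\theta)=(d-1)\psi_j\cos\theta+\frac{d-2}{\sin^2\theta}\psi_j\cos\theta\neq(d-1)\psi_j\cos\theta$, so it does \emph{not} solve \eqref{e:solutionpmj}, and your boundary normalization $\cos\theta_0$ is also off (at $\theta=\pi/2\pm\theta_0$ one gets $\mp\tan\theta_0$, not $\pm1$). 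The paper instead uses the commutator $[\partial_\theta,-\Delta]=\frac{d-2}{\sin^2\theta}\partial_\theta$ to produce the correct solution $u_j^-=-\kappa_0\psi_j(\varphi)\partial_\theta\phi_0$; with this one checks $\partial_\nu u_j^-=-\kappa_0\partial_{\theta\theta}\phi_0=(d-2)\tan\theta_0$ on each component of $\partial\Omega$, and \eqref{f''onDSJincoordinates} then gives exactly zero. Note that $\partial_\theta\phi_0$ is \emph{not} a multiple of $\cos\theta$ for the De Silva--Jerison domain (it is a Legendre-type function), so your formula cannot be repaired by adjusting constants.

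A secondary point: for $\zeta_{d+1}^+$ you gesture at ``the explicit $j=d+1$ computation'' but do not carry it out, and your preliminary discussion (positivity of $J_j$ for large $j$, then monotonicity down to $d+1$) does not close the gap --- monotonicity only tells you $\delta^2\mathcal F(0)[\zeta_{d+1}^+,\zeta_{d+1}^+]$ exceeds a \emph{negative} number. The paper handles this by exhibiting $u_{d+1}^+=-\frac{\kappa_0}{(d-2)\tan\theta_0}\psi_{d+1}(\partial_{\theta\theta}^2\phi_0+\phi_0)$ (using $\lambda_{d+1}^{d-2}=2(d-1)$ and a second commutator) and computing the eigenvalue to be $4\cos^{d-2}(\theta_0)\tan\theta_0>0$. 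Your treatment of $\zeta_1^+$ is essentially the paper's plan.
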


\begin{proof} We divide the proof of the Theorem in two steps dealing respectively with the dilation $\zeta_1^+$ and the rotations $\zeta_j^-$, for $2\leq j\leq d$.

\noindent \textbf{Step 1. The dilation $\zeta_1^+$.} Our first claim is that $u_1^+$ minimizes $\ds\int_{\Omega_{b_{\nu, \theta_0}}} |\nabla f|^2  - (d-1)f^2 d\mathcal H^{d-1}$ amongst all $f$ equal to $-\zeta_1^+ = \frac{1}{\sqrt{\mathcal H^{d-2}(\mathbb S^{d-2})}}$ on $\partial \Omega_{b_{\nu, \theta_0}}$ and which are $L^2$ orthogonal to $b$ in $\Omega_{b_{\nu, \theta_0}}$.

Taking the first variation of the energy, subject to the orthogonality constraint, such a minimizer must satisfy an equation of the form $\ -\Delta f = (d-1)f + k b,\ $ where $k$ is some constant. However, if there exists an $f$ with the same boundary values as $u_1^+$ and which solves the above equation for some $k \neq \eta$, then it must be the case that $f$ is either a super or sub solution of the equation that $u_1^+$ solves, and thus must lie either below or above $u_1^+$ on all of $\Omega_{b_{\nu, \theta_0}}$ (we have a maximum principle because both functions are orthogonal to $b$). However, it is not possible that both $f > u_1^+$ and $f, u_1^+ \perp b$, a positive function. Similarly if $f < u_1^+$. Thus, given the boundary values $\zeta_1^+$, the only solution is $u_1^+$. 

Consider the function 
$$f = \frac{1}{\sqrt{\mathcal H^{d-2}(\mathbb S^{d-2})}}\left(1 + c\phi_0\right),$$
where $\phi_0=b/\kappa_0$, $c = - \int_{\Omega_{b_{\nu,\theta_0}}} \phi_0$ so that $f$ is $L^2$ orthogonal to $\phi_0$ on $\Omega_{b_{\nu, \theta_0}}$.  We have that \begin{equation}\label{e:u1plusfromabove}\begin{aligned} \delta^2 \mathcal F(0)[\zeta_1^+, \zeta_1^+]+E =& 2\,\int_{\Omega_{b_{\nu, \theta_0}}} |\nabla u_1^+|^2 - (d-1)(u_1^+)^2\,d\HH^{d-2}  -4(d-2)\tan(\theta_0)\cos^{d-2}(\theta_0) + E \\
=& 2\int_{\Omega_{b_{\nu, \theta_0}}} |\nabla f|^2 - (d-1)(f)^2\,d\HH^{d-2}  -4(d-2)\tan(\theta_0)\cos^{d-2}(\theta_0)\\
=& \frac{2c^2}{\mathcal H^{d-2}(\mathbb S^{d-2})}\int_{\Omega_{b_{\nu, \theta_0}}} |\nabla \phi_0|^2 - (d-1)(\phi_0)^2\,d\HH^{d-1} \\ +&\frac{2}{\mathcal H^{d-2}(\mathbb S^{d-2})}\left(2(d-1)c^2 -(d-1)\mathcal H^{d-1}(\Omega_{b_{\nu, \theta_0}})\right) - 4(d-2)\tan(\theta_0)\cos^{d-2}(\theta_0)\\
=& \frac{2(d-1)}{\mathcal H^{d-2}(\mathbb S^{d-2})}\left(2c^2 - \mathcal H^{d-1}(\Omega_{b_{\nu, \theta_0}})\right) - 4(d-2)\tan(\theta_0)\cos^{d-2}(\theta_0) ,\end{aligned}\end{equation}
where $E > 0$ is some error which reflects how far $f$ is from minimizing.

To estimate $E$ we first use that $u_1^+$ minimizes to get $$\begin{aligned}E=& 2\,\int_{\Omega_{b_{\nu, \theta_0}}} |\nabla f|^2 - (d-1)(f)^2\,d\HH^{d-2}- 2\,\int_{\Omega_{b_{\nu, \theta_0}}} |\nabla u_1^+|^2 - (d-1)(f)^2\,d\HH^{d-2}\\ =& 2\,\int_{\Omega_{b_{\nu, \theta_0}}} |\nabla (u_1^+-f)|^2 - (d-1)((u_1^+-f))^2\,d\HH^{d-2} = 2\sum_{j } \tilde{a}_j^2 (\lambda_j-(d-1)),\end{aligned}$$ where $\tilde{a}_j = \left\langle u_1^+ - f, \phi_j \right\rangle$ and $\phi_j$ is the $j$ Dirichlet eigenfunction of $\Omega_{b_{\nu,\theta}}$ with eigenvalue $\lambda_j$.

 Let us consider the Dirichlet eigenfunctions of $\Omega_{b_{\nu, \theta_0}}$; by separation of variables we can write any eigenfunction $\phi_j(\theta, \phi) = g_j(\theta) \psi_j(\varphi)$ where $\psi_j$ is a spherical harmonic on $\mathbb S^{d-2}$ and $g_j(\pi/2-\theta_0) = g_j(\pi/2+\theta_0) = 0$. In order for $\phi_j \not\perp 1, u_1^+$ but for $\phi_j \perp \phi_0$ it must be the case that $\psi_j$ is constant and $g_j$ is an even function with at least two interior zeroes. As such $g_j$ as two local critical points, at $\pi/2-\eta, \pi/2+\eta$ (for $\eta < \theta_0$) and $\partial_\theta g_j$ satisfies
$$\begin{aligned} -\partial^2_{\theta \theta}(\partial_\theta g_j(\theta)) - (d-2)\cot(\theta)\partial_\theta (\partial_\theta g_j(\theta)) =& \left(\lambda_j - \frac{d-2}{\sin^2(\theta)}\right)(\partial_\theta g_j(\theta)),\\ \partial_\theta g_j(\pi/2- \eta) =& \,\partial_\theta g_j(\pi/2 + \eta) = 0,\end{aligned}$$ where $\lambda_j$ is the eigenvalue associated to $\phi_j = g_j$.  But note that if $h(\theta)\psi_2^{\mathbb S^{d-2}}$ is an eigenfunction (and it is for some $h$), then $h$ minimizes the energy associated to the equation $L = -\Delta + \frac{(d-2)}{\sin^2(\theta)}$ but over Dirichlet boundary conditions on a larger integral. Thus the eigenvalue associated to $h(\theta)\psi_2^{\mathbb S^{d-2}}$ is smaller than $\lambda_j$ (as $\lambda_j$ is a Dirichlet eigenvalue, but not necessarily the first, associated to the same $L$ on a smaller domain). The Rayleigh quotient of $h(\theta)\psi_2^{\mathbb S^{d-2}}$ is simply the Rayleigh quotient of $h(\theta)$ times the Raleigh quotient of $\psi_2^{\mathbb S^{d-2}}$ (the latter being equal to $d-2$). The former is bounded below by $\lambda_1^{\Omega_{b_{\nu,\theta_0}}} = d-1$.  Therefore, $\tilde{a}_j \neq 0 \Rightarrow \lambda_j \geq (d-2)(d-1)$.
We claim that
$$\tilde{a}_j = \frac{\int \phi_j}{\sqrt{\mathcal H^{d-2}(\mathbb S^{d-2})}} \frac{(d-1)}{\lambda_j - (d-1)}\,,\quad\text{for every}\quad j\quad\text{such that}\quad \tilde a_j\neq0.$$  
Indeed $\left\langle u_1^+ - f, \phi_j\right\rangle = \left\langle u_1^+, \phi_j\right\rangle - \frac{1}{\sqrt{\mathcal H^{d-2}(\mathbb S^{d-2})}}\int \phi_j$. We also know that 
$\big\langle u_1^+, \phi_j\big\rangle=0$,
 for all $j> 0$ as $\phi_j$ has zero Dirichlet data and is orthogonal to $\phi_0$. Integrating by parts we get  
$$\begin{aligned}(\lambda_j - (d-1)) \left\langle u_1^+, \phi_j\right\rangle =& - \frac{1}{\sqrt{\mathcal H^{d-2}(\mathbb S^{d-2})}} \int_{\partial \Omega_{b_{\nu, \theta_0}}} \partial_\nu \phi_j\\ =&  \frac{1}{\sqrt{\mathcal H^{d-2}(\mathbb S^{d-2})}} \int_{\Omega_{b_{\nu, \theta_0}}} -\Delta \phi_j  = \frac{\lambda_j}{\sqrt{\mathcal H^{d-2}(\mathbb S^{d-2})}}\int_{\Omega_{b_{\nu, \theta_0}}} \phi_j.\end{aligned}$$ 
Putting everything together we get the claim. Also note that the above argument implies that $u_1^+\perp \phi_j$ ($\Leftrightarrow 1 \perp \phi_j$) as long as $j > 0$. So 
\begin{equation}\label{e:errorestimate} \begin{aligned}E &= \frac{2}{\mathcal H^{d-2}(\mathbb S^{d-2})} \sum_{\{j\mid \tilde{a}_j \neq 0\}} \left(\int \phi_j\right)^2 \frac{(d-1)^2}{\lambda_j - (d-1)}\\ &\stackrel{\lambda_j \geq (d-2)(d-1)}{\leq} \frac{3}{\mathcal H^{d-2}(\mathbb S^{d-2})} \sum_{\{j\mid \tilde{a}_j \neq 0\}} \left(\int \phi_j\right)^2 \leq \frac{3}{\mathcal H^{d-2}(\mathbb S^{d-2})} \left(\mathcal H^{d-1}(\Omega_{b_{\nu, \theta_0}}) - c^2\right),\end{aligned}\end{equation} 
where 
we assume that $d \geq 7$ so that $\frac{d-1}{d-3} \leq \frac{3}{2}$.
Putting \eqref{e:u1plusfromabove} together with \eqref{e:errorestimate} yields 
\begin{align} 
\delta^2 \mathcal F(0)[\zeta_1^+, \zeta_1^+]  &> \frac{2(d-1)}{\mathcal H^{d-2}(\mathbb S^{d-2})}\left(2c^2 - \mathcal H^{d-1}(\Omega_{b_{\nu, \theta_0}})\right) -4(d-2)\tan(\theta_0)\cos^{d-2}(\theta_0)\notag\\ 
&\quad- \frac{3}{\mathcal H^{d-2}(\mathbb S^{d-2})}\left(\mathcal H^{d-1}(\Omega_{b_{\nu, \theta_0}}) -c^2\right)\label{e:overestimateerror}\\
&= \frac{1}{\mathcal H^{d-2}(\mathbb S^{d-2})}\left((4d-1)c^2 - (2d+1)\mathcal H^{d-1}(\Omega_{b_{\nu, \theta_0}})\right) -4(d-2)\tan(\theta_0)\cos^{d-2}(\theta_0)\notag  .
\end{align}

\noindent\textbf{The case $d=7$.}
We will now verify that the right hand side of \eqref{e:overestimateerror} is positive when $d= 7$ via a numerical calculation. We use Mathematica, though, since these special functions are well known, one could do this by hand. In order to minimize the effect of rounding errors by the computer, we will round up negative terms and round down positive terms. As the calculation is delicate, we will have to go to four places right of the decimal. 

\noindent When $d = 7$, $\theta_0 \approx \sin^{-1}(.517331) \approx .54372$ (see \cite{DeJe}), and we have
$$\begin{aligned} \frac{\mathcal H^{d-1}(\Omega_{b_{\nu, \theta_0}})}{\mathcal H^{d-2}(\mathbb S^{d-2})} &< \int_{-.5438}^{.5438} \cos(\theta)^5 d\theta < .8650\\
4(d-2)\cos^{d-2}(\theta_0)\tan(\theta_0) &< 20\cos^5(.5437)\tan(.5438) < 5.5509 \\
\Rightarrow \frac{(2d+1)\mathcal H^{d-1}(\Omega_{b_{\nu, \theta_0}})}{\mathcal H^{d-2}(\mathbb S^{d-2})} +&4(d-2)\tan(\theta_0)\cos^{d-2}(\theta_0)  < 15*.8650+5.5609= 18.5359.
\end{aligned}$$
Now we calculate $\int \phi_0$. Following \cite{DeJe}, $$\phi_0 = c_{\theta} (1-\cos^2(\theta))^{-\frac{d-3}{4}}Q_{\frac{d-1}{2}}^{\frac{d-3}{2}}(\cos(\theta)),$$ where $c_\theta$ is the $L^2$ normalizing constant and $Q^\mu_\nu$ is the associated Legendre function of the second kind (when $d$ is even we work with $P^\mu_\nu$ the associated Legendre function of the first kind). We define these functions following the convention of Mathematica (which is the same convention used in \cite{DeJe} and \cite{TransFuncBook}), namely that, $$(1-t^2)\frac{d^2 Q^\mu_\nu(t)}{dt^2} - 2(\mu+1)t\frac{d Q^\mu_\nu(t)}{dt} + (\nu-\mu)(\nu+\mu+1)Q^\mu_\nu(t) = 0.$$

In the case $d= 7$, we can use Mathematica to calculate $$\|(1-\cos^2(\theta))^{-1}Q_{3}^{2}(\cos(\theta))\|^2_{L^2} < \mathcal H^{d-2}(\mathbb S^{d-2})\int_{-.5174}^{.5174} \left(Q_{3}^{2}(t)\right)^2 dt < 34.6188\Rightarrow c_\theta > \frac{.1699}{\sqrt{\mathcal H^{d-2}(\mathbb S^{d-2})}}$$

Then we have (using Mathematica again)  
$$\begin{aligned} \int \phi_0 >& \frac{.1699*\mathcal H^{d-2}(\mathbb S^{d-2})}{\sqrt{\mathcal H^{d-2}(\mathbb S^{d-2})}}\int_{\pi/2-\theta_0}^{\pi/2+\theta_0} \sin(\theta)^5 Q_{3}^{2}(\cos(\theta)) d\theta\\ >&.1699*\sqrt{\mathcal H^{d-2}(\mathbb S^{d-2})} \int_{-.5173}^{.5173} (1-t^2)^2 Q_{3}^2(t)dt > .8326*\sqrt{\mathcal H^{d-2}(\mathbb S^{d-2})}.\end{aligned}$$
Putting everything together we finally get 
$$\begin{aligned}\frac{4d-1}{\mathcal H^{d-2}(\mathbb S^{d-2})}\left(\int \phi_0\right)^2 > 27(.685) >& 18.7170 \qquad\text{and}\qquad \delta^2\mathcal F(0)[\zeta_1^+, \zeta_1^+] > 18.7170 - 18.5359 >0.\end{aligned}$$

\noindent {\bf An asymptotic argument in higher dimension.} Let us briefly sketch an asymptotic argument, which proves that the deformation corresponding to $\zeta_1^+$ is positive for all $d \geq 21$. This completes the proof as $7 < d \leq 20$ can be verified by hand in the manner outlined above. The first key observation is that \begin{equation}\label{e:asympforthetanaught} 
.62< \theta_0 \sqrt{d} \leq .65,\quad \forall d > 20.
\end{equation} 
The proof of \eqref{e:asympforthetanaught} is relatively straightforward: the lower bound follows from the fact that 
$$2\theta_0 \mathcal H^{d-2}(\mathbb S^{d-2}) \geq \mathcal H^{d-1}(\Omega_{b_{\nu, \theta_0}}) \geq \frac{1}{2}\mathcal H^{d-1}(\mathbb S^{d-1}),$$
the Gamma function formulas for the surface area of the sphere and Sterling approximation. 
The upper bound of \eqref{e:asympforthetanaught} is a bit harder; the sharp isoperimetric inequality tells us  that the perimeter of $\Omega_{b_{\nu, \theta_0}}$ is larger than the half sphere's perimeter. So we have $$2\cos^{d-2}(\theta_0)\mathcal H^{d-2}(\mathbb S^{d-2}) > \mathcal H^{d-2}(\mathbb S^{d-2}).$$ Approximating $\cos$ by its Taylor series and doing some elementary estimates yields the result. 
Now, from \eqref{e:asympforthetanaught} it is easy to get \begin{equation}\label{e:goodestimateonmeasure}
1.3 > 2\sqrt{d}\theta_0 >  \frac{\sqrt{d}\mathcal H^{d-1}(\Omega_{b_{\nu, \theta_0}})}{\mathcal H^{d-2}(\mathbb S^{d-2})} > \frac{\sqrt{d}}{2} \frac{\mathcal H^{d-1}(\mathbb S^{d-1})}{\mathcal H^{d-2}(\mathbb S^{d-2})} > \frac{\sqrt{2\pi}}{2} > 1.24.
\end{equation}
Furthermore, we can also estimate \begin{equation}\label{e:lowerboundonc} \frac{\sqrt{d}c^2}{\mathcal H^{d-2}(\mathbb S^{d-2})} \geq \frac{4\cos^{2d-4}(\theta_0)}{1.3}, \: \forall d > 20.\end{equation} This is a little bit trickier, but it follows once one observes that  $$c \equiv \int \phi_0 = \frac{1}{d-1}\int -\Delta \phi_0 = \frac{\mathcal H^{d-2}(\partial \Omega_{b_{\nu,\theta_0}})}{(d-1)\kappa_0}\qquad\text{and}\qquad\frac{\mathcal H^{d-1}(\Omega_{b_{\nu,\theta_0}})}{\kappa_0^2} \geq \int |\nabla \phi_0|^2 = d-1.$$
Plugging \eqref{e:goodestimateonmeasure} and \eqref{e:asympforthetanaught} into \eqref{e:overestimateerror}
we find that $\delta^2\mathcal F(0)[\zeta_1^+, \zeta_1^+] > 0$ for all $d > 20$.

\noindent\textbf{Step 2. Rotations and the kernel of $\delta^2\mathcal F(0)$.} In order to conclude the proof of Proposition \ref{thm: eigenvaluesofQ}, and to prove that $\delta^2\mathcal F(0)[\zeta_j^-, \zeta_j^-] > 0$ for $j > d$, it suffices to show that 
$$
\delta^2\mathcal F(0)[\zeta_j^-, \zeta_j^-] = 0,\; 2\leq j \leq d.
$$ 

Using the commutator relation $\ds[\partial_\theta, -\Delta] = \frac{d-2}{\sin^2(\theta)}\partial_{\theta}$, we get that 
$$
u_j^-(\theta, \varphi) =-\kappa_0\psi_j(\varphi)\partial_\theta \phi_0,\: 2\leq j \leq d.
$$
It is then easy to compute (after observing  $-\partial_{\theta \theta}\phi_0(\pi/2+\theta_0) = -(d-2)\tan(\theta_0)\partial_\theta \phi_0(\pi/2+\theta_0)$) 
$$
\delta^2\mathcal F(0)[\zeta_j^-, \zeta_j^-] =4(d-2) \cos^{d-2}(\theta_0)(\tan(\theta_0) - \tan(\theta_0)) = 0,\; \forall 2 \leq j \leq d.
$$
As we stated above, invoking \eqref{thejsincrease}, this proves that $\delta^2\mathcal F(0)[\zeta_j^-, \zeta_j^-] > 0$ for all $j > d$ and that $\delta^2\mathcal F(0)[\zeta_1^-, \zeta_1^-] < 0$.
\smallskip

 In the same manner, to prove that $\delta^2\mathcal F(0)[\zeta_j^+, \zeta_j^+] > 0$ for all $j > d$ it suffices to prove that 
 $$
 \delta^2\mathcal F(0)[\zeta_{d+1}^+, \zeta_{d+1}^+]>0.
 $$ 
 To do so, we observe that the function
 \begin{equation}\label{whoisud1plus}
u^+_{d+1}(\theta,\varphi) = -\frac{\kappa_0}{(d-2)\tan(\theta_0)}\psi_{d+1}(\varphi)\left(\partial^2_{\theta\theta}\phi_0(\theta)+ \phi_0(\theta)\right)
\end{equation}
satisfies \eqref{e:solutionpmj}, for $j = d+1$. To see this, note that $\lambda_{d+1}^{\mathbb S^{d-2}} = 2(d-1)$ and that 
$$
-\Delta \partial^2_{\theta \theta} \phi_0 = \left((d-1) - \frac{2(d-1)}{\sin^2(\theta)}\right)\partial^2_{\theta\theta}\phi_0 - \frac{2(d-1)}{\sin^2(\theta)}\phi_0.
$$
We can then compute that 
\begin{equation}\label{e:energyofthelargeplusses}\begin{aligned}
\delta^2\mathcal F(0)[\zeta_{d+1}^+, \zeta_{d+1}^+] 
&= 4\cos^{d-2}(\theta_0)\left(\int_{\mathbb S^{d-2}} u_{d+1}^+\partial_\nu u_{d+1}^+\ d\sigma - (d-2)\tan(\theta_0)\right)\\ 
&= 4\cos^{d-2}(\theta_0)\left((d-1)\tan(\theta_0) - (d-2)\tan(\theta_0)\right) > 0.
\end{aligned}
\end{equation}
	\end{proof}

\appendix 
\renewcommand{\thesection}{\Alph{section}}
\setcounter{section}{0}

\setcounter{teo}{0}
     \renewcommand{\theteo}{\Alph{section}.\arabic{teo}}
	\section{Proof of Lemma \ref{l:main}}\label{s:app}
	
	We derive the first and second variation of the Alt-Caffarelli functional restricted to the sphere. We show that the second variation is continuous at zero and that at zero it is diagonalizable. The notation is the same as in Subsection \ref{ss:var}.

	\renewcommand{\thesubsection}{\Alph{section}.\arabic{subsection}}
\setcounter{subsection}{0}
\renewcommand{\theteo}{\Alph{section}.\arabic{teo}}
\renewcommand{\theequation}{\Alph{section}.\arabic{equation}}

	\subsection{First and second variation of $\mathcal F$}\label{sub:first_and_second_variation}

	The following Lemma contains the explicit formulas for the variations of $\mathcal F$ around a domain $\Omega:=\{b>0\}\cap \de B_1\subset \de B_1$, where $b$ is a $1$-homogeneous minimizer of $\mathcal E$ with isolated singularity, so that, by \cite{AlCa}, $\de \Omega:=\de\{b>0\}\cap \de B_1$ is smooth; we denote by $\nu$ the exterior normal to $\partial\Omega$ in the sphere. Keeping the notation from Subsection \ref{ss:var}, $b$ is given by $\kappa_0\phi^{\Omega}_1:=b$. To simplify notation we denote $\phi^{\Omega}_1$ simply as $\phi_1$. Recall that,
$$-\Delta_{\de B_1} \phi_1=(d-1) \,\phi_1\quad \mbox{in}\quad\Omega\,,\qquad \int_{\Omega}\phi_1^2\,d\HH^{d-1}=1\ , \qquad \phi_1=0\quad \mbox{\and}\quad \kappa_0\,\de_\nu \phi_1=-1 \quad\mbox{on}\quad\de \Omega\,.$$	
For every $g\in C^{2,\alpha}(\partial\Omega)$, $\zeta\in C^{2,\alpha}(\partial\Omega)$ and $t\in\R$ we define the function $\Psi_{g,t}:\partial\Omega\to\mathbb S^{d-1}$ given through the spherical exponential map
$$\Psi_{g,t}(x):=\exp_x\Big(\big(g(x)+t\,\zeta(x)\big)\,\nu(x)\Big)=\cos\big(g(x)+t\,\zeta(x)\big)x+\sin\big(g(x)+t\,\zeta(x)\big)\nu(x).$$
We notice that the exponential map $(0,\pi)\times\partial\Omega\ni(s,x)\mapsto \exp_x\big(s\nu(x)\big)$ is a diffeomorphism in a neighbourhood of $\partial\Omega$. Then, for $\|g\|_{C^{2,\alpha}(\partial\Omega)}$ and $\|\zeta\|_{C^{2,\alpha}(\partial\Omega)}$ small enough, the map 
$$[-2,2]\times\partial\Omega\ni (t,x)\mapsto \Psi_{g,t}(x),$$
is injective and smooth (and also a diffeomorphism from $]-2,2[\times\{\zeta\neq0\}$ onto the image of $\Psi_{g}:=\Psi_{g,0}$) and so, the derivative 
$$\partial_t\Psi_{g,t}(x)=\zeta(x)\xi_t(x),\quad\text{where}\quad \xi_t(x):=\cos\big(g(x)+t\, \zeta(x)\big)\,\nu_{\partial\Omega}(x)-\sin\big(g(x)+t\, \zeta(x)\big)\,x\,,$$
defines a vector field $X$ on the closed set $\Psi_g([-2,2]\times\partial\Omega)$ such that 
\begin{equation}\label{e:Psi_t_explicit}
\qquad X(\Psi_{g,t}(x))=\zeta(x)\,\xi_t(x)\,,\quad\text{for every}\quad x\in\partial\Omega,
\end{equation}
which can be extended to the entire sphere by the Whitney's extension theorem. 
Moreover, the map $\Psi_{g,t}:\partial\Omega\to \partial\Omega_{g,t}$ is a diffeomorphism, where $\partial\Omega_{g,t}$ is the boundary of a spherical set $\Omega_{g,t}$. Setting, $\Omega_g:=\Omega_{g,0}$, we notice that the flow associated to this vector field $X$ is an extension to $\de B_1$ of the map $\Psi_{g,t}\circ\Psi_g^{-1}:\partial\Omega_g\to\partial \Omega_{g,t}$, that is we have   
$$\de_t \Psi_{g,t}(\Psi_g^{-1}(x)):=X(\Psi_{g,t}(\Psi_g^{-1}(x)))\,\qquad\mbox{and}\qquad \Psi_g(\Psi_g^{-1}(x))=x\quad \mbox{for}\quad x\in \partial\Omega_g.$$
In the next lemma we calculate the first and the second variations of $\mathcal F$.
\begin{lemma}\label{l:variation}
Let $\Omega$, $k_0$ and $X$ be as above. 
Then, for every $\zeta,g \in C^{2,\alpha}(\de \Omega)$, we have 				\begin{align}
	\delta\mathcal F(g)[\zeta]&=\int_{\de \Omega_{g}}\,(X\cdot \nu_g)\,d\HH^{d-2}-\kappa_0^2 \int_{\de \Omega_g} \partial_X\phi_g\,\de_{\nu_g} \phi_g\, d\HH^{d-2} \notag,\\
			\delta^2\mathcal F(g)[\zeta,\zeta]
			&=\int_{\de \Omega_g} {\rm div}_{\de B_1} X\,(X\cdot \nu_g) \,d\HH^{d-2}+\kappa_0^2\int_{\de \Omega_g} \big(-\de_{XX} \phi_g-2\partial_X \phi_g'\,\big)\de_{\nu_g}\phi_g\,d\HH^{d-2}\notag,
\end{align}
where $\nu_g$ is the outward unit normal to $\partial\Omega_g$, $H_{\partial\Omega_g}$ is the scalar mean curvature of $\de \Omega_g$, $\phi_g=\phi_1^{\Omega_g}$ is the first eigenfunction of $\Omega_g$ and $\phi_g'$ is the solution of 
$$-\Delta_{\de B_1} \phi_g'=\lambda_g\phi_g'-\phi_g\int_{\partial\Omega_g}\partial_X\phi_g\,\partial_{\nu_g}\phi_g\quad\text{in}\quad\Omega_g,\qquad\phi_g=0\quad\text{on}\quad\partial\Omega_g,\qquad\int_{\Omega_g}\phi'_g\phi_g=0.$$			
\end{lemma}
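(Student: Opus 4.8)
The plan is to reduce both terms of $\mathcal F$ to integrals over the \emph{fixed} reference domain $\Omega_g$ by pulling them back along the flow $\Phi_t:=\Psi_{g,t}\circ\Psi_g^{-1}$ of the autonomous vector field $X$, and then differentiate in $t$ under the integral sign. Writing $J_t$ for the tangential Jacobian of $\Phi_t\colon\partial B_1\to\partial B_1$, one has $m(g+t\zeta)=\HH^{d-1}(\Omega_{g,t})=\int_{\Omega_g}J_t\,d\HH^{d-1}$, while the change of variables $x=\Phi_t^{-1}(y)$ rewrites the Rayleigh quotient for $\lambda_1^{\Omega_{g,t}}$ as
\[
\lambda(g+t\zeta)=\min\Big\{\int_{\Omega_g}A_t\nabla u\cdot\nabla u\,d\HH^{d-1}\ :\ u\in H^1_0(\Omega_g),\ \int_{\Omega_g}B_t\,u^2\,d\HH^{d-1}=1\Big\},
\]
with $B_t:=J_t$ and $A_t$ the pull-back of the round metric, both smooth in $(t,x)$ and equal at $t=0$ to $\mathrm{Id}$, resp.\ $1$. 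Since $\Omega_g$ is connected its first Dirichlet eigenvalue is simple, so by the analytic perturbation theory already invoked for Lemma~\ref{l:main}(i) (see \cite{micheletti,nagy}) the eigenvalue $\lambda(t):=\lambda(g+t\zeta)$ and the eigenfunction $u_t\in H^1_0(\Omega_g)$ normalized by $\int_{\Omega_g}B_t\,u_t^2\,d\HH^{d-1}=1$, with $u_0=\phi_g$, depend analytically on $t$; this legitimates all the differentiations below.

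For the measure term I would use $\partial_tJ_t=\big(({\rm div}_{\partial B_1}X)\circ\Phi_t\big)J_t$, whence $\frac{d}{dt}m=\int_{\Omega_{g,t}}{\rm div}_{\partial B_1}X=\int_{\partial\Omega_{g,t}}X\cdot\nu_{g,t}\,d\HH^{d-2}$ by the divergence theorem on $\partial B_1$ (recall $X$ is tangent to $\partial B_1$ along $\partial B_1$); at $t=0$ this is the first term of $\delta\mathcal F$. Differentiating $\int_{\Omega_{g,t}}{\rm div}_{\partial B_1}X$ once more at $t=0$ — the integrand is a fixed function, so only the domain moves — and applying the transport theorem and then the divergence theorem gives $\int_{\partial\Omega_g}({\rm div}_{\partial B_1}X)(X\cdot\nu_g)\,d\HH^{d-2}$, the first term of $\delta^2\mathcal F$; this route produces no explicit curvature term.

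For the eigenvalue term, the first variation is Hadamard's formula: testing the weak equation $\int_{\Omega_g}A_t\nabla u_t\cdot\nabla v=\lambda(t)\int_{\Omega_g}B_t\,u_t v$ with $v=\partial_tu_t\in H^1_0(\Omega_g)$ and using the normalization gives $\lambda'(t)=\int_{\Omega_g}\dot A_t\nabla u_t\cdot\nabla u_t-\lambda(t)\int_{\Omega_g}\dot B_t\,u_t^2$; identifying $\dot A_0,\dot B_0$ with $X$, integrating by parts, and using that $\nabla\phi_g=(\partial_{\nu_g}\phi_g)\nu_g$ on the level set $\partial\Omega_g$, the interior terms cancel and one is left with $\delta\lambda(g)[\zeta]=-\int_{\partial\Omega_g}(\partial_{\nu_g}\phi_g)^2(X\cdot\nu_g)\,d\HH^{d-2}=-\int_{\partial\Omega_g}\partial_X\phi_g\,\partial_{\nu_g}\phi_g\,d\HH^{d-2}$. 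Multiplied by $\kappa_0^2$ and added to the measure part this is the stated formula for $\delta\mathcal F(g)[\zeta]$, and it identifies the constant $-\int_{\partial\Omega_g}\partial_X\phi_g\,\partial_{\nu_g}\phi_g=\delta\lambda(g)[\zeta]$ that appears as the coefficient of $\phi_g$ in the equation defining $\phi_g'$. For the second variation I would differentiate the identity for $\lambda'(t)$ once more: this produces the second-order coefficients $\ddot A_0,\ddot B_0$ together with a bilinear term in $\nabla\phi_g$ and $\nabla\phi_g'$, where $\phi_g'$ is the derivative of the eigenfunction along the family; differentiating the eigenvalue equation and imposing the normalization identifies $\phi_g'$ as the solution of the boundary value problem written in the statement (with source $\delta\lambda(g)[\zeta]\,\phi_g$ and constraint $\int_{\Omega_g}\phi_g'\phi_g=0$). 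After collecting the $\ddot A_0,\ddot B_0$ contributions, integrating by parts on $\partial B_1$, and using $\phi_g|_{\partial\Omega_g}=0$ so that the interior terms telescope, one should land on $\delta^2\lambda(g)[\zeta,\zeta]=\int_{\partial\Omega_g}\big(-\partial_{XX}\phi_g-2\partial_X\phi_g'\big)\partial_{\nu_g}\phi_g\,d\HH^{d-2}$; scaling by $\kappa_0^2$ and adding the measure term from the previous paragraph gives $\delta^2\mathcal F(g)[\zeta,\zeta]$.

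The hard part will be this last step. The second variation of the eigenvalue requires careful bookkeeping of the second-order pull-back coefficients $\ddot A_0,\ddot B_0$ — equivalently, of the commutator of $\Delta_{\partial B_1}$ with transport along $X$ applied to $\phi_g$ — and of the cross terms between the eigenfunction and its derivative, together with a somewhat delicate integration by parts on $\partial B_1$ that exploits $\phi_g=0$ on $\partial\Omega_g$ to make the interior contributions telescope and to recast everything as boundary integrals of the asserted shape; pinning down the exact boundary value problem for $\phi_g'$ is part of the same computation. Everything else — the transport theorem on the sphere, the simplicity and real-analyticity of $\lambda_1$ on the connected domain $\Omega_g$, and the first-order Hadamard formula — is routine.
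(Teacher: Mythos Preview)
Your strategy is sound, but it differs from the paper's route. The paper does \emph{not} pull back to a fixed domain and differentiate the transported coefficients $A_t,B_t$. Instead, for the eigenvalue part it works directly on the moving domains: it differentiates the boundary constraint $\phi_t(\Psi_t(x))=0$ in $t$ to obtain explicit formulas for the traces
\[
\phi_t'\!\circ\!\Psi_t=-\zeta\,\xi_t\!\cdot\!D\phi_t(\Psi_t),\qquad
\phi_t''\!\circ\!\Psi_t=-\zeta^2\,\xi_t\!\cdot\!D^2\phi_t(\Psi_t)\xi_t-2\zeta\,\xi_t\!\cdot\!D\phi_t'(\Psi_t),
\]
then differentiates the PDE $-\Delta\phi_t=\lambda_t\phi_t$ once and twice, multiplies by $\phi_t$, and integrates by parts using $\phi_t=0$ on $\partial\Omega_t$ and the orthogonality $\int\phi_t\phi_t'=0$. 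The second variation $\delta^2\lambda$ then drops out in one line as $\int_{\partial\Omega_t}\phi_t''\,\partial_{\nu_t}\phi_t$, with $\phi_t''$ replaced by its boundary expression. This completely bypasses the second--order pull--back coefficients $\ddot A_0,\ddot B_0$ that you flag as the hard part of your approach; the paper's bookkeeping is genuinely lighter here.

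One point you should watch if you carry out your version: in the pull--back picture the object you differentiate, $\partial_t u_t|_{t=0}$, is the \emph{material} derivative $\dot\phi_g=\phi_g'+\partial_X\phi_g\in H^1_0(\Omega_g)$, not the shape derivative $\phi_g'$ that appears in the statement; the latter has boundary trace $-\partial_X\phi_g$ (this is precisely the content of the first displayed trace formula above). Your sentence ``differentiating the eigenvalue equation and imposing the normalization identifies $\phi_g'$ as the solution of the boundary value problem written in the statement'' blurs this distinction. The PDE in the statement is indeed satisfied by the shape derivative, but you will first obtain an equation for $\dot\phi_g$ and must subtract $\partial_X\phi_g$ to recover it; the boundary condition and the cross term $-2\partial_X\phi_g'$ in $\delta^2\lambda$ both come from this subtraction. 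Once that is tracked correctly your route closes, but the paper's direct computation is shorter.
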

\begin{oss}
We notice that the function $\phi'_g$ depends linearly on $\zeta$. A more precise (but heavier) notation would be $\phi'_g=\delta\phi(g)[\zeta]$. 
\end{oss}

	\begin{proof}  
	Recall the following Hadamard formula, whose proof can be found in \cite[Section 5.2]{HP}
	\begin{equation}\label{e:Had1}
	\frac{d}{dt} \int_{\Omega_{g,t}} f(t,x) \, d\HH^{d-1}(x) = \int_{\Omega_{g,t}} \partial_t f(t,x)  \, d\HH^{d-1}(x)  + \int_{\partial\Omega_{g,t}} f(t,x) (X(x) \cdot \nu_t(x))  \, d\HH^{d-2}(x)\,,   
	\end{equation}
	where $\nu_t$ denotes the outward pointing normal to a domain $ \Omega_{g,t}$ in the sphere. 
	Applying this law we see immediately that
	\begin{equation}\label{e:m'}
	\frac{d}{dt}\HH^{d-1}(\Omega_{g,t})=\delta m(g+t\zeta)[\zeta]=\int_{\de \Omega_{g,t}}\,(X\cdot \nu_t)\,d\HH^{d-2} =\int_{\Omega_{g,t}} {\rm div}_{\de B_1} X\,  d\HH^{d-1}.
	\end{equation}
	Then we can apply \eqref{e:Had1} again to get
	\begin{align}\label{e:m''}
	\delta^2m(g)[\zeta,\zeta]
		&=\int_{\Omega_g} \de_t({\rm div_{\de B_1}} X)\, d\HH^{d-1}+\int_{\de \Omega_g} {\rm div}_{\de B_1} X\,(X\cdot \nu_g)\, d\HH^{d-2}=\int_{\de \Omega_g} {\rm div}_{\de B_1} X\,(X\cdot \nu_g) \,d\HH^{d-2}
\end{align}
where we used the fact that $X$ is autonomous to conclude that $\de_t ({\rm div} X)={\rm div} (\de_t X)=0$. 
		In particular, when $g=0$, we have that $X=\zeta\,\nu$ on $\de \Omega$ and so
	\begin{gather}
	\delta m(0)[\zeta]=\int_{\de \Omega } \zeta\, \,d\HH^{d-2}\qquad\text{and}\qquad
	\delta^2m(0)[\zeta,\zeta]=\int_{\de \Omega} H_{\de \Omega}\,\zeta^2\, d\HH^{d-2}. \label{e:m''0}
	\end{gather}
	
	We now calculate the first and the second variation of the functional $\lambda: C^{2,\alpha}(\partial \Omega)\to \R$. 
	We first notice that by \cite[Theorem 5.7.4]{HP}, the map $t\mapsto \lambda(g+t\zeta)$ is $C^\infty$ in a neighborhood of zero and the first and the second derivatives have been computed in \cite[Theorem 5.7.1]{HP} and \cite[Section 5.9.6]{HP} for sets in $\R^d$. We also notice that the $C^3$ regularity condition from \cite[Section 5.9.6]{HP} can be replaced by $C^{2,\alpha}$ as it was shown in \cite{dambrine} and \cite{BrDeVe}. Below, we formally derive the exact expressions of $\delta\lambda$ and $\delta^2\lambda$ on the sphere.
		For the sake of simplicity we set 
	$$\phi_t:=\phi_{g+t\zeta}=\phi_1^{\Omega_{g,t}}\ ,\quad \Omega_t:=\Omega_{g,t}\ ,\quad \lambda_t=\lambda(g+t\zeta)\ ,\quad \Psi_t:=\Psi_{g,t}.$$ 
	Using \eqref{e:Had1} once again and the fact that $\phi_t=0$ on $\de \Omega_{t}$, we obtain
	\begin{equation}\label{e:ort}
	0=\de_t \int_{\Omega_t} \phi_t^2=2\int_{\Omega_t} \phi_t\,\phi'_t+\int_{\de \Omega_t} \phi^2_t\,(X\cdot \nu_t)= 2\int_{\Omega_t} \phi_t\,\phi'_t\,.
	\end{equation}
	By definition of $\phi_t$ and $\Psi_t$ we have  
	$$
	0=\phi_t(\Psi_t(x))=\phi_t\big(\cos (g(x)+t \zeta(x))\, x+\sin (g(x)+t\,\zeta(x))\,\nu(x)\big)\qquad  \mbox{for every $x\in \de\Omega$}
	$$
	so that differentiating we get for every $x\in \de \Omega$
	\begin{align}
	\phi_t'(\Psi_t)&=\,\zeta\big(\sin(g+t\zeta)\,x-\cos(g+t\, \zeta)\,\nu \big)\cdot D\phi_t(\Psi_t)=-\zeta\xi_t\cdot D\phi_t(\Psi_t)\,, 	\label{e:bd1}\\
	\phi''_t(\Psi_t)&=-\zeta^2\big(\sin(g+t\zeta)\,x-\cos(g+t\, \zeta)\,\nu \big)\cdot D^2 \phi_t(\Psi_t)\,\big[\sin(g+t\zeta)\,x-\cos(g+t\, \zeta)\,\nu \big]\notag\\
	&\qquad+2 \,\zeta\big(\sin(g+t\zeta)\,x-\cos(g+t\, \zeta)\,\nu \big) \cdot D \phi_t'(\Psi_t)\notag\\
	&\qquad\qquad-\zeta^2\big(\cos(g+t\zeta)\,x+\sin(g+t\, \zeta)\,\nu\big)\cdot D \phi_t(\Psi_t)	\notag\\
	&=-\zeta^2\,\xi_t\cdot D^2 \phi_t(\Psi_t)\,\xi_t-2 \,\zeta\,\xi_t \cdot D \phi_t'(\Psi_t)\label{e:bd2},
	\end{align}
where, we used that $\Psi_t\cdot D \phi_t(\Psi_t)=0$.
\noindent Differentiating formally the equation for $\phi_t$,  we obtain
	\begin{gather}
	-\Delta_{\de B_1} \phi'_t=\lambda_t\,\phi'_t+\lambda'_t\,\phi_t \quad \mbox{in}\quad\Omega_t\notag\,,\\
	-\Delta_{\de B_1} \phi''_t=\lambda_t\, \phi''_t+2\,\lambda'_t\,\phi'_t+\lambda''_t\,\phi_t\quad \mbox{in}\quad\Omega_t\notag\,,
	\end{gather} 
	where $\lambda'_t=\delta\lambda(g+t\zeta)[\zeta]$ and $\lambda''_t=\delta^2\lambda(g+t\zeta)[\zeta,\zeta]$.
\noindent	Multiplying the first of these two equations by $\phi_t$, we get
	\begin{align}
	\kappa_0^2\,\delta\lambda(g+t\zeta)[\zeta]&=\kappa_0^2 \int_{\de \Omega_t} \phi'_t\,\de_{\nu_t} \phi_t\,d\HH^{d-2}=\kappa_0^2\int_{\de \Omega} \phi'_t(\Psi_t)\,\de_{\nu_t} \phi_t(\Psi_t)\, J\Psi_t\,d\HH^{d-2}\label{e:lambda't}\\
	&\stackrel{\eqref{e:bd1}}{=}-\kappa_0^2 \int_{\de \Omega} \zeta\,\xi_t\cdot D\phi_t(\Psi_t)\,\de_{\nu_t} \phi_t(\Psi_t)\, J\Psi_t\,d\HH^{d-2}=-\kappa_0^2 \int_{\de \Omega_t} \partial_X\phi_t\,\de_{\nu_t} \phi_t\, d\HH^{d-2}\notag.
	\end{align}
	where the first equality follows from
	\begin{align*}
	\int_{\Omega_t} \Delta_{\de B_1}\phi_t'\,\phi_t\,d\HH^{d-1}
	&=-\int_{\Omega_t} \nabla \phi_t'\,\nabla \phi_t\,d\HH^{d-1}+\underbrace{\int_{\de\Omega_t} \de_{\nu_t}\phi_t'\, \phi_t\,d\HH^{d-2}}_{=0\mbox{ since }\phi_t=0\mbox{ on }\de\Omega_t}\\
	&=\underbrace{\int_{\Omega_t}  \phi_t'\,\Delta_{\de B_1} \phi_t\,d\HH^{d-1}}_{=0\mbox{ by }\Delta_{\de B_1}\phi_t=\lambda_t\phi_t\mbox{ and \eqref{e:ort}}}-\int_{\de\Omega_t} \phi_t'\, \de_{\nu_t}\phi_t\,d\HH^{d-2}
	\end{align*}
	Using the definition of $\kappa_0$, for $g=0$ and $t=0$ yields
	\begin{equation}\label{e:lambda'0}
	\kappa_0^2\,\delta\lambda(0)[\zeta]=-\kappa_0^2 \int_{\de \Omega} \zeta\,|\de_{\nu} \phi_0|^2\,d\HH^{d-2}=-\int_{\de \Omega} \zeta\,d\HH^{d-2}\,.
	\end{equation}
	In a similar way, multiplying the equation for $\phi''_t$ by $\phi_t$, integrating by parts and using \eqref{e:ort} and \eqref{e:bd1}, we get
	\begin{align*}
	\kappa_0^2\, \delta^2\lambda(g+t\zeta)[\zeta,\zeta]
	=\kappa_0^2\int_{\de \Omega_t} \phi''_t\,\de_{\nu_t} \phi_t\,d\HH^{d-2}=\kappa_0^2\int_{\de \Omega} \phi''_t(\Psi_t)\,\de_{\nu_t} \phi_t(\Psi_t)\,J\Psi_t\,d\HH^{d-2}.
	\end{align*} 
	By \eqref{e:bd2} we get 
	\begin{align}\label{e:lambda''}
	\kappa_0^2\,\delta^2\lambda(g+t\zeta)[\zeta,\zeta]
	&=\kappa_0^2\,\int_{\de \Omega} \Big[-\zeta^2\,\xi_t\cdot D^2 \phi_t(\Psi_t)\,\xi_t-2 \,\zeta\,\xi_t \cdot D \phi_t'(\Psi_t)\Big]\,\de_{\nu_t} \phi_t(\Psi_t)\,J\Psi_t\,d\HH^{d-2}\\\notag
	&=\kappa_0^2\int_{\de \Omega_t} \big(-\de_{XX} \phi_t-2\partial_X \phi_t'\big)\,\de_{\nu_t}\phi_t\,d\HH^{d-2}.
	\end{align} 
	Evaluating the above expression at $g=0$ and $t=0$ and using the definition of $\kappa_0$, we conclude
	\begin{equation}\label{e:lambda''0}
	\kappa_0^2\, \delta^2\lambda(0)[\zeta,\zeta]=\int_{\de \Omega} \zeta^2 \, H_{\de\Omega}\,d\HH^{d-2}
	+2\,\kappa_0^2\,\int_{\de \Omega} \phi'_0\,\de_{\nu}\phi'_0\,d\HH^{d-2}
	\end{equation}
	Combining \eqref{e:m''}, \eqref{e:m''0}, \eqref{e:bd1}, \eqref{e:lambda'0}, \eqref{e:lambda''} and \eqref{e:lambda''0}, and setting $u_\zeta:=\kappa_0\phi'_0$, we conclude the proof of the lemma.
	\end{proof}

	\subsection{The first and the second variation in zero}\label{sub:variations_in_zero} 
	Let $\zeta \in C^{2,\alpha}(\de \Omega)$. We first notice that, by Lemma \ref{l:variation}, equations \eqref{e:m''0}, \eqref{e:lambda'0} and \eqref{e:lambda''0}, we have 
\begin{gather}
\delta\mathcal F(0)[\zeta]=0
\qquad\text{and}\qquad
\delta^2\mathcal F(0)[\zeta,\zeta]=2\int_{\de \Omega} \zeta^2 \, H_{\de\Omega}\,d\HH^{d-2}
+2\,\int_{\de \Omega} \zeta\,\,T\zeta\,d\HH^{d-2}\label{e:var''0},
\end{gather}	
where $T\colon H^{\sfrac12}(\de \Omega)\to H^{-\sfrac12}(\de \Omega)$ is the Dirichlet to Neumann operator defined by $T\zeta=\de_\nu u_\zeta$, where $u_\zeta$ is the solution of 
\begin{equation}\label{e:psi'0}
\begin{cases}
\begin{array}{rcl}\ds -\Delta_{\de B_1} u_\zeta  &=&  \ds (d-1) u_\zeta-\frac{1}{\kappa_0^2}\left(\int_{\de \Omega}\zeta\,d\HH^{d-2}\right)\,b \quad \mbox{in}\quad \Omega\\
\ds u_\zeta &=& -\zeta\quad \mbox{on}\quad\de \Omega\qquad\text{and}\qquad
\ds \int_{\Omega}b\,u_\zeta\,d\HH^{d-1}=0\,.
\end{array}
\end{cases}
\end{equation}
\begin{oss}
In the notation of Lemma \ref{l:variation}, we have $u_\zeta:=\kappa_0\phi_0'$.
\end{oss}
In this subsection we prove \eqref{e:diagonalization} by diagonalizing the bilinear form $\delta^2\mathcal F(0)$. To this aim we first notice that the linear operator $T$ is well defined. Indeed, the solution of \eqref{e:psi'0} is unique since if there were two solutions $u_1$ and $u_2$, then the difference $v:=u_1-u_2$ would be a solution of the eigenvalue problem
	$$\ds -\Delta_{\de B_1} v=(d-1)v\quad\text{in}\quad \Omega,\qquad
	v=0\quad\text{on}\quad \partial\Omega.	$$
	Thus, $v=C\,b$ for some constant. Now, the orthogonality condition $\ds\int_{\Omega}v\,b=0$ implies that the constant is zero. The existence of a solution of \eqref{e:psi'0} now follows by the Fredholm alternative.
	
	Therefore we can consider the linear operator
	$$
	H^{\sfrac12}(\de \Omega) \ni \zeta \mapsto B(\zeta):=T\zeta+(H_{\de \Omega}+\Lambda)\,\zeta\in H^{-\sfrac12}(\de\Omega)
	$$
	where $\Lambda:=\|H_{\de \Omega}\|_{L^\infty}+ C_{\Omega}$ ($C_{\Omega} > 0$ is a constant which depends only on $\Omega$; it is proportional to the sum of the norm of the trace operator and to the bounds given by elliptic regularity for the Laplacian on $\Omega$). $B$ is a self-adjoint, positive linear operator, with compact inverse. Indeed, it is sufficient to notice that 
\begin{align*}
\int_{\de \Omega} T(\zeta_1)\,\zeta_2\,d\HH^{d-2}&=\int_{\Omega}( \Delta _{\de B_1}u_{\zeta_1}\, u_{\zeta_2} +\nabla u_{\zeta_1}\cdot\nabla u_{\zeta_2} )  \, d\HH^{d-2}\\
&= \int_{\Omega} \left(-(d-1) u_{\zeta_1}\, u_{\zeta_2}+\nabla u_{\zeta_1}\cdot\nabla u_{\zeta_2} \right)  \, d\HH^{d-2}\,,
\end{align*}
where in the last equality we used the orthogonality of $u_{\zeta}$ and $b$. The theory of compact operators now implies that there exists a sequence of positive eigenvalues $(\tilde \lambda_i)_i$ accumulating to $\infty$ and of eigenfunctions $(\xi_i)_i$ which form an orthonormal basis of $L^2(\de \Omega)$ satisfying
	$$
	B\xi_i=\tilde \lambda_i\,\xi_i\qquad \mbox{for every }i\in \N\,.
	$$
	 In particular the sequence $\lambda_i:=\tilde{\lambda_i}-\Lambda$ and the functions $(\xi_i)_i$, for $i\in \N$, satisfy \eqref{e:diagonalization}. We also note that elliptic regularity tells us that the eigenfunctions of $T + H_{\partial \Omega}$ are as regular as $H_{\partial \Omega}$; since $\partial \Omega$ is locally analytic, we can conclude that $\xi_i \in C^{2,\alpha}$.

	 The upper bound \eqref{e:itr} follows straightforwardly from the previous analysis.

	 \subsection{Uniform bounds on $\phi_{g+t\zeta}$}	
	Recall that $\phi_{g+t\zeta}$ satisfies the elliptic equation 
$$(-\Delta_{\de B_1}-\lambda_{g+t\zeta}) \phi_{g+t\zeta} = 0\quad\text{in}\quad\Omega_{g,t},\qquad\phi_{g+t\zeta}=0\quad\text{on}\quad\partial\Omega_{g,t},\qquad\int_{\Omega_{g,t}}\phi_{g+t\zeta}^2=1.$$ 
The spherical domain $\Omega_g$ is $C^{2,\alpha}$ smooth and the $C^{2,\alpha}$ norm depends only on $\|g\|_{C^{2,\alpha}}$. Thus by classical elliptic regularity/Schauder estimates (see \cite{GiTr}), there is a universal constant $C$ such that
\begin{equation}\label{e:phi_t_bound}
\|\phi_g\|_{C^{2,\alpha}(\overline{\Omega}_g)} \le C(\|g\|_{C^{2,\alpha}}+1).
\end{equation}
\subsection{Bounds on $\phi_g'$}\label{sub:bounds_on_phi'} In this subsection we prove \eqref{e:bound_on_g}, which in the notation of this section reads as $\|\phi_g'\|_{L^2(\Omega_g)}\le C\|\zeta\|_{L^2(\partial\Omega)}.$
We first prove {a bound on $ \delta\lambda(g)[\zeta]$}. Recall that by \eqref{e:lambda't} 
$$\lambda_g'=\delta\lambda(g)[\zeta]= -\int_{\de \Omega_g} \partial_X\phi_g\,\de_{\nu_g} \phi_g\, d\HH^{d-2}.$$
This, together with \eqref{e:phi_t_bound}, implies that
\begin{equation}\label{eqn:dotlambdaisbounded} \big|\delta\lambda(g)[\zeta]\big| \leq \|\phi_g\|^2_{C^1(\overline{\Omega}_g)} \|\zeta\|_{L^2} \sqrt{\mathcal H^{d-2}(\partial \Omega_g)}  < C^2\left(\|g\|_{C^{2,\alpha}}^2+1\right)\|\zeta\|_{L^2},\end{equation}
where $C$ is the constant from \eqref{e:phi_t_bound}.
Recall that by \eqref{e:bd1} $\phi'_g$ is a solution of the equation 
$$(-\Delta_{\de B_1}-\lambda_g)\phi'_g=\lambda'_g\phi_g\quad\text{in}\quad \Omega_g,\qquad\phi'_g=-\partial_{X}\phi_g\quad\text{on}\quad\partial\Omega_g,\qquad\int_{\Omega_g}\phi_g'\phi_g\,d\HH^{d-1}=0.$$
Thus, $\phi'_g$ can be decomposed as 
\begin{equation}\label{e:phi'_t_decomposition}
\phi_g'=h_g+\psi_g-\phi_g\int_{\Omega_g}h_g\phi_g,
\end{equation}
where $h_g$ and $\psi_g$ are the solutions of the problems 
$$\Delta_{\de B_1} h_g=0\quad\text{in}\quad \Omega_g,\qquad h_g=-\partial_{X}\phi_g\quad\text{on}\quad\partial\Omega_g.$$
\begin{equation}\label{e:psit}
(-\Delta_{\de B_1}-\lambda_g)\psi_g=\lambda_gh_g+\lambda'_g\phi_g\quad\text{in}\quad \Omega_g,\qquad\psi_g=0\quad\text{on}\quad\partial\Omega_g,\qquad\int_{\Omega_g}\psi_g\phi_g\,d\HH^{d-1}=0.
\end{equation}
Notice that $\lambda_g$ is the lowest eigenvalue on $\Omega_g$ and its eigenspace is one-dimensional and generated by $\phi_g$. Thus, the orthogonality $\ds\int_{\Omega_g}\psi_g\phi_g=0$ implies that there is some constant $c_b>0$ such that 
$$c_b\int_{\Omega_g}\big(|\nabla\psi_g|^2+\psi_g^2\big)\,d\HH^{d-1}\le \int_{\Omega_g}\Big(|\nabla\psi_g|^2-\lambda_g\psi_g^2\Big)\,d\HH^{d-1}.$$
Thus, multiplying by $\psi_g$ and integrating by parts in \eqref{e:psit}, we get 
$$c_b\|\psi_g\|_{H^1(\Omega_g)}^2\le \int_{\Omega_g}\psi_g(\lambda_gh_g+\lambda'_g\phi_g)\,dx=\lambda_g\int_{\Omega_g}\psi_g h_g\,dx\le \lambda_g\|\psi_g\|_{L^2(\Omega_g)} \|h_g\|_{L^2(\Omega_g)},$$
which in turn gives  
$$\|\phi_g'\|_{L^2(\Omega_g)}\le \|\psi_g\|_{L^2(\Omega_g)}+2\|h_g\|_{L^2(\Omega_g)}\le \left(\frac{\lambda_g}{c_b}+2\right)\|h_g\|_{L^2(\Omega_g)}.$$
Now notice that by the maximum principle we have $\|h_g\|_{L^\infty(\Omega_g)}\le C\|\zeta\|_{L^\infty(\partial\Omega)}$ and, by \cite[Lemma 10]{dambrine}, $\|h_g\|_{L^1(\Omega_g)}\le C\|\zeta\|_{L^1(\partial\Omega)}$. Thus, by interpolation, we get 
$\|h_g\|_{L^2(\Omega_g)}\le C\|\zeta\|_{L^2(\Omega)}$ and finally we obtain \eqref{e:bound_on_g}. 
		
\subsection{Proof of \eqref{e:modulus_of_continuity}} \label{sub:modulus_of_continuity} The continuity of $\delta^2\mathcal F(g)[\zeta, \zeta]$ at zero follows by the continuity of $\delta^2\lambda(g)[\zeta, \zeta]$ and $\delta^2 m(g)[\zeta, \zeta]$. We give the details for $\delta^2 \lambda$ (the more complicated case) but the arguments for $\delta^2m$ are in the same vein. 
By \eqref{e:lambda''} 
we have 
\begin{align*}
\delta^2\lambda(g)[\zeta,\zeta]-\delta^2\lambda(0)[\zeta,\zeta]&=\int_{\de \Omega} \Big[-\zeta^2\,\xi_g\cdot D^2 \phi_g(\Psi_g)[\xi_g]-2 \,\zeta\,\xi_g \cdot D \phi_g'(\Psi_g)\Big]\,\de_{\nu_g} \phi_g(\Psi_g)\,J\Psi_g\,d\HH^{d-2}\\
&\quad-\int_{\de \Omega} \Big[-\zeta^2\,\nu\cdot D^2 \phi_0[\nu]-2 \,\zeta\,\nu \cdot D \phi_0'\Big]\,\de_{\nu} \phi_0\,d\HH^{d-2},
\end{align*}
which in turn can be split into
\begin{align*}
I_1+I_2+I_3&:=\int_{\de \Omega} \zeta^2\Big[-\,\xi_g\cdot D^2 \phi_g(\Psi_g)[\xi_g]\,\de_{\nu_g} \phi_g(\Psi_g)\,J\Psi_g+\nu\cdot D^2 \phi_0[\nu]\,\de_{\nu} \phi_0\Big]\,d\HH^{d-2}\\
&\quad-2\int_{\de \Omega} \Big[\zeta\,\xi_g \cdot D \phi_g'(\Psi_g)\,\de_{\nu_g} \phi_g(\Psi_g)-\zeta\,\nu_g(\Psi_g) \cdot D \phi_g'(\Psi_g)\,\de_{\xi_g} \phi_g(\Psi_g)\Big]\,J\Psi_g\,d\HH^{d-2}\\
&\quad\quad-2\int_{\de \Omega} \Big[\zeta\,\nu_g(\Psi_g) \cdot D \phi_g'(\Psi_g)\,\de_{\xi_g} \phi_g(\Psi_g)\,J\Psi_g-\zeta\,\nu \cdot D \phi_0'\,\de_{\nu} \phi_0\Big]\,d\HH^{d-2}.
\end{align*}
We first notice that there is a universal constant $C$ such that for every $x\in\partial\Omega$ and $g\in C^{2,\alpha}(\partial\Omega)$
$$|\xi_g-\nu|\le C|g|\qquad\text{and}\qquad |\nu_g(\Psi_g)-\nu|\le C\big(|\nabla g|+|g|\big).$$ 
Moreover, the uniform Schauder estimates on the boundary of $\Omega_g$ give that for some universal modulus of continuity $\omega$ we have 
$$\| D^2 \phi_g(\Psi_g)- D^2 \phi_0\|_{L^\infty(\Omega)}\le \omega(\|g\|_{C^{2,\alpha}})\qquad\text{and}\qquad \| D\phi_g(\Psi_g)- D\phi_0\|_{L^\infty(\Omega)}\le \omega(\|g\|_{C^{2,\alpha}}).$$
Thus, we get that 
$$|I_1|\le \|\zeta\|_{L^2(\partial\Omega)}^2\,\omega(\|g\|_{C^{2,\alpha}}).$$ In order to estimate the third integral $I_3$, we notice that \eqref{e:bd1}, a change of variables and an integration by parts give
\begin{align*}
\frac{1}2 I_3&=-\int_{\de \Omega} \zeta\Big[\nu_g(\Psi_g) \cdot D \phi_g'(\Psi_g)\,\de_{\xi_g} \phi_g(\Psi_g)\,J\Psi_g-\nu \cdot D \phi_0'\,\de_{\nu} \phi_0\Big]\,d\HH^{d-2}\\
&=\int_{\de \Omega} \nu_g(\Psi_g) \cdot D \phi_g'(\Psi_g)\, \phi_g'(\Psi_g)\,J\Psi_g\,d\HH^{d-2}-\int_{\de \Omega}\nu \cdot D \phi_0'\,\phi_0'\,d\HH^{d-2}\\
&=\int_{\de \Omega_g} \partial_{\nu_g}\phi_g'\, \phi_g'\,d\HH^{d-2}-\int_{\de \Omega}\partial_\nu \phi_0'\,\phi_0'\,d\HH^{d-2}\\
&=\underbrace{\int_{\Omega_g} |\nabla \phi_g'|^2\,d\HH^{d-1}-\int_{\Omega} |\nabla \phi_0'|^2\,d\HH^{d-1}}_{=: E_1}  - \underbrace{\lambda_g \int_{\Omega_g} |\phi_g'|^2\,d\HH^{d-1} +\lambda_0 \int_{\Omega} |\phi_0'|^2\,d\HH^{d-1}}_{=:E_2}\\
\end{align*}
Using the decomposition \eqref{e:phi'_t_decomposition} and the fact that $h_g$ is harmonic on $\Omega_g$, we get 
$$\int_{\Omega_g} |\nabla \phi_g'|^2\,d\HH^{d-1}=\int_{\Omega_g} |\nabla h_g|^2\,d\HH^{d-1}+\int_{\Omega_g} |\nabla (\psi_g-\phi_g\langle h_g,\phi_g\rangle)|^2\,d\HH^{d-1},$$
where $\ds\langle h_g,\phi_g\rangle:=\int_{\Omega_g}h_g\phi_g$. Next we notice that by \cite[Lemma A.2, eq. (A.20)]{BrDeVe} or by \cite[Lemma 4.10]{DaLa} we have  
 $$\left|\int_{\Omega_g} |\nabla h_g|^2-\int_{\Omega} |\nabla h_0|^2\right|\le \omega(\|g\|_{C^{2,\alpha}})\|\zeta\|_{H^{1/2}}^2.$$
Thus, it is sufficient to prove that
$$\left|\int_{\Omega_g} |\nabla (\psi_g-\phi_g\langle h_g,\phi_g\rangle)|^2\,d\HH^{d-1}-\int_{\Omega} |\nabla (\psi_0-\phi_0\langle h_0,\phi_0\rangle)|^2\,d\HH^{d-1}\right|\le \omega(\|g\|_{C^{2,\alpha}})\|\zeta\|_{L^2}^2,$$
which follows by a simple argument by contradiction, which we sketch for completeness. Indeed, we notice that the functions $\|\zeta\|_{L^2}^{-1}\psi_g$ have uniformly bounded $H^1$-norm and so are converging weakly in $H^1$ (and since they are solutions of a PDE they converge strongly in $H^1$) to $\|\zeta\|_{L^2}^{-1}\psi_0$ as $\|g\|_{C^{2,\alpha}}\to0$. On the other hand, $\|\zeta\|_{L^2}^{-1}h_g$ have uniformly bounded $L^2$ norm and converge weakly in $L^2$ to $\|\zeta\|_{L^2}^{-1}h_0$. Finally, the strong $H^1$ convergence of $\phi_g$ to $\phi_0$ gives that $\|\zeta\|_{L^2}^{-1}\phi_g\langle h_g,\phi_g\rangle\to \|\zeta\|_{L^2}^{-1}\phi_0\langle h_0,\phi_0\rangle$ strongly in $H^1$.

The estimate for $E_2$ is analogous, see again \cite[Lemma 4.10]{DaLa}. This concludes the estimate of $I_3$.

We now estimate $I_2$. Since $D\phi_g$ is parallel to $\nu_g$ on $\partial\Omega_g$ we get that 
 \begin{align*}
I_2&=-2\int_{\de \Omega} \Big[\zeta\,\xi_g \cdot D \phi_g'(\Psi_g)\,\de_{\nu_g} \phi_g(\Psi_g)-\zeta\,\nu_g(\Psi_g) \cdot D \phi_g'(\Psi_g)\,\de_{\xi_g} \phi_g(\Psi_g)\Big]\,J\Psi_g\,d\HH^{d-2}\\
&=-2\int_{\de \Omega} \Big[\zeta\,\xi_g \cdot D \phi_g'(\Psi_g)\,\de_{\nu_g} \phi_g(\Psi_g)-\zeta\,\nu_g(\Psi_g) \cdot D \phi_g'(\Psi_g)\,\big(\xi_g\cdot\nu_g(\Psi_g)\big)\de_{\nu_g} \phi_g(\Psi_g)\Big]\,J\Psi_g\,d\HH^{d-2}\\
&=-2\int_{\de \Omega} \zeta\,\big(\xi_g-(\xi_g\cdot\nu_g(\Psi_g))\nu_g(\Psi_g)\big) \cdot D \phi_g'(\Psi_g)\,\de_{\nu_g} \phi_g(\Psi_g)\,J\Psi_g\,d\HH^{d-2}.
\end{align*}
We first notice that we have the pointwise estimates 
$$\big|\xi_g-(\xi_g\cdot\nu_g(\Psi_g))\nu_g(\Psi_g)\big|\le C\big(|\nabla g|+|g|\big)\qquad\text{and}\qquad |\de_{\nu_g} \phi_g(\Psi_g)|\le C,$$
and that $\xi_g-(\xi_g\cdot\nu_g(\Psi_g))\nu_g(\Psi_g)$ is parallel to $\partial\Omega_g$. Then, on $\partial\Omega_g$ we define 
$$z=\zeta(\Psi_g^{-1})\ ,\quad V=\xi_g(\Phi_g^{-1})\quad\text{and}\quad W=V-(V\cdot\nu_g)\nu_g,$$
so we obtain
\begin{align*}
I_2&=-2\int_{\de \Omega_g} z\,(W \cdot D \phi_g')\,\de_{\nu_g} \phi_g\,d\HH^{d-2}=-2\int_{\de \Omega_g} z\,(W \cdot D (z V\cdot D\phi_g))\,\de_{\nu_g} \phi_g\,d\HH^{d-2}\\
&=-2\int_{\de \Omega_g} z^2\,(W \cdot D (V\cdot D\phi_g))\,\de_{\nu_g} \phi_g\,d\HH^{d-2}-\int_{\de \Omega_g} W \cdot D (z^2)\, (V\cdot D\phi_g)\,\de_{\nu_g} \phi_g\,d\HH^{d-2}\\
&=-2\int_{\de \Omega_g} z^2\,(W \cdot D (V\cdot D\phi_g))\,\de_{\nu_g} \phi_g\,d\HH^{d-2}+\int_{\de \Omega_g} z^2\, \text{div}_{\de B_1}\,\big(W(V\cdot D\phi_g)\,\de_{\nu_g} \phi_g\big)\,d\HH^{d-2}\\
&=-\int_{\de \Omega_g} z^2\,(W \cdot D (V\cdot D\phi_g))\,\de_{\nu_g} \phi_g\,d\HH^{d-2}+\int_{\de \Omega_g} z^2\, (\text{div}_{\de B_1}\,W)\big((V\cdot D\phi_g)\,\de_{\nu_g} \phi_g\big)\,d\HH^{d-2},
\end{align*}
where for the second equality we used the boundary condition $\phi_g'(\Psi_g)=-\zeta\xi_g\cdot D\phi_g(\Psi_g)$, in the fourth one we integrated by parts and in the last we distributed the divergence and used the identity $W Df=0$ for every $f$, by definition of $W$. Thus, we get that 
$$|I_2|\le \omega\big(\|g\|_{C^{2,\alpha}(\partial\Omega)}\big)\int_{\partial\Omega}|\zeta|^2\,d\HH^{d-2},$$
which concludes the proof of Lemma \ref{l:main}.

\renewcommand{\thesection}{\Alph{section}}
\setcounter{section}{1}
	\section{Lyapunov-Schmidt reduction for $\mathcal F$}\label{s:LS} We prove the following lemma, inspired by \cite{Simon0}. We shall denote by $K:=\ker (\delta^2(\mathcal F(0)))$ and by $N:=\dim K$, its dimension (see Subsection \ref{ss:var}). We introduce an auxiliary functional 
\begin{equation}\label{e:defofG}
\mathcal G:C^{2,\alpha}(\partial\Omega)\times\R\to\R\,,\qquad\mathcal G(\zeta,s) = (\kappa_0^2 +s^3)( \lambda(\zeta) - (d-1)) + m(\zeta)-m(0),
\end{equation} 
where $\kappa_0$ is the constant from Subsection \ref{ss:var}.
Thus, the variable  $s$ accounts for the possibility that the coefficient $\kappa$, from Proposition \ref{prop:first_mode}, in front of $\phi_1$ may not be equal to $\kappa_0$. It is easy to check that the first and the second variation of $\mathcal G$ are given by 
	\begin{equation}\label{e:propertiesofG}
	\mathcal G(0,0) = 0,\quad \delta \mathcal G(0, 0)[\xi,r] = 0, \quad \delta^2 \mathcal G(0,0)[(\xi, r), (\eta,t)] = \delta^2 \mathcal F(0)[\xi, \eta].
	\end{equation}
\noindent In particular, we have that $\ker \delta^2 \mathcal G(0,0) = \ker \delta^2\mathcal F(0) \oplus \mathbb R$. In the lemma below,  we let $P_K$ and $P_{K^\perp}$ be the $(L^2(\de\Omega)\oplus \mathbb R)$-projections on $K \oplus \mathbb R$ and $K^\perp\oplus \{0\}$, respectively.
	
	\begin{lemma}[Lyapunov-Schmidt decomposition]\label{l:LS}
	There exists a neighborhood $U$ of $0$ in $C^{1,\alpha}\oplus \mathbb R$ and an analytic map $\Upsilon\colon U\cap(K \oplus \mathbb R)\to K^\perp \subset C^{2,\alpha}(\de \Omega)$ such that
	\begin{equation}\label{e:LS1}
	\Upsilon(0,0)=0\,,\qquad \delta\Upsilon(0,0)=0\,,
	\end{equation}
	and moreover
	\begin{equation}\label{e:LS2}
	\begin{cases}
	P_{K^\perp}\left(\delta\mathcal G(\zeta+\Upsilon(\zeta, s),s)\right)=0&(\zeta,s)\in (K\oplus \mathbb R)\cap U\\
	P_{K}\left( \delta\mathcal G(\zeta+\Upsilon(\zeta,s),s)  \right)=-\nabla G(\zeta,s)&(\zeta,s)\in (K\oplus \mathbb R)\cap U,
	\end{cases}
	\end{equation}
	where $G(\zeta,s)=\mathcal G(\zeta+\Upsilon(\zeta,s),s)$ for every $(\zeta,s) \in (K\oplus \mathbb R)\cap U$. Moreover, all the critical points of $\mathcal G$ in $U$ are given by
	$$
	\mathcal C:=\big\{(\zeta+\Upsilon(\zeta,s),s)\,:\,(\zeta,s)\in (K\oplus \mathbb R)\cap U\quad\mbox{and}\quad \nabla G(\zeta,s)=0\big\}
	$$
	which is an analytic submanifold of the $N+1$-dimensional analytic manifold 
	$$
	\mathcal M:=\left\{(\zeta+\Upsilon(\zeta,s), s)\,:\,(\zeta,s)\in (K\oplus \mathbb R)\cap U\right\}\,.
	$$
	\end{lemma}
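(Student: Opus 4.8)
\textbf{Proof plan for Lemma \ref{l:LS}.} The plan is to set this up as a standard Lyapunov--Schmidt reduction applied to the map $\delta\mathcal G$, whose domain and codomain splitting is dictated by \eqref{e:propertiesofG}. First I would make precise the functional-analytic framework: viewing $\delta\mathcal G(\zeta,s)$ as an element of the dual (or, after identification, of $C^{0,\alpha}(\de\Omega)\oplus\R$), so that the equation $\delta\mathcal G(\zeta,s)=0$ characterizes critical points of $\mathcal G$ near $0$. By analyticity of $\mathcal F$ (Lemma \ref{l:main}(i)), hence of $\lambda$ and $m$, the map $\mathcal G$ is analytic on a $C^{2,\alpha}$-neighborhood of $0$, so $\delta\mathcal G$ is an analytic map between Banach spaces. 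The key structural input is \eqref{e:propertiesofG}: $\delta^2\mathcal G(0,0)$ restricted to $K\oplus\R$ is identically $0$, while on $K^\perp\oplus\{0\}$ it coincides with $\delta^2\mathcal F(0)$, which by the diagonalization in Lemma \ref{l:main}(vi) is an isomorphism from $K^\perp$ onto its image (the zero eigenvalue having been projected out).

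Next I would apply the analytic implicit function theorem. Write a general point as $(\xi+w,s)$ with $(\xi,s)\in K\oplus\R$ and $w\in K^\perp$, and consider the map
$$
\Phi(\xi,s,w):=P_{K^\perp}\big(\delta\mathcal G(\xi+w,s)\big)\,\in\, K^\perp{}^\ast.
$$
Then $\Phi(0,0,0)=0$ by \eqref{e:propertiesofG}, and $\partial_w\Phi(0,0,0)=P_{K^\perp}\circ\delta^2\mathcal G(0,0)|_{K^\perp}=\delta^2\mathcal F(0)|_{K^\perp}$, which is invertible on $K^\perp$. The analytic implicit function theorem then yields an analytic map $\Upsilon\colon U\cap(K\oplus\R)\to K^\perp$, defined on a neighborhood $U$ of $0$, with $\Upsilon(0,0)=0$ and $\Phi(\xi,s,\Upsilon(\xi,s))=0$, which is exactly the first line of \eqref{e:LS2}. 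Differentiating $\Phi(\xi,s,\Upsilon(\xi,s))=0$ at the origin and using $\delta^2\mathcal G(0,0)[K\oplus\R,\cdot]=0$ gives $\delta^2\mathcal F(0)[\delta\Upsilon(0,0)[\cdot],\cdot]=0$ on $K^\perp$, whence $\delta\Upsilon(0,0)=0$, proving \eqref{e:LS1}. To regularity: since $P_{K^\perp}$ has finite-codimensional kernel and the $\xi_i$ spanning $K$ lie in $C^{2,\alpha}$, the reduced equation can be posed on $C^{2,\alpha}$ (one only needs $C^{1,\alpha}$ closeness to quote analyticity of $\mathcal G$, which is why $U$ is taken in $C^{1,\alpha}\oplus\R$), so $\Upsilon$ indeed takes values in $C^{2,\alpha}$.

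Then I would define $G(\xi,s):=\mathcal G(\xi+\Upsilon(\xi,s),s)$ and compute $\nabla G$. By the chain rule,
$$
\nabla G(\xi,s)=\delta\mathcal G\big(\xi+\Upsilon(\xi,s),s\big)\big[(\cdot)+\delta\Upsilon(\xi,s)[\cdot]\big],
$$
and since $\delta\Upsilon(\xi,s)[\cdot]\in K^\perp$ while $P_{K^\perp}\delta\mathcal G(\xi+\Upsilon(\xi,s),s)=0$ by construction, the contribution of the $\delta\Upsilon$ term vanishes, leaving $\nabla G(\xi,s)=P_K\big(\delta\mathcal G(\xi+\Upsilon(\xi,s),s)\big)$ up to the sign convention; this is the second line of \eqref{e:LS2} (the sign is a bookkeeping choice matching the gradient-flow orientation used in Section \ref{sub_1_mode}). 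Finally, the characterization of critical points: a point $(\zeta,s)$ in $U$ with $\delta\mathcal G(\zeta,s)=0$ must have $P_{K^\perp}\delta\mathcal G=0$ and $P_K\delta\mathcal G=0$; the first forces, by uniqueness in the implicit function theorem, $\zeta=\xi+\Upsilon(\xi,s)$ with $\xi=P_K\zeta$, and then the second reads $\nabla G(\xi,s)=0$. Hence $\mathcal C$ is exactly the critical set, and it is an analytic subvariety of the analytic graph $\mathcal M=\{(\xi+\Upsilon(\xi,s),s):(\xi,s)\in(K\oplus\R)\cap U\}$, which is an $(N+1)$-dimensional analytic manifold since $\Upsilon$ is analytic.

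\textbf{Main obstacle.} The delicate point is not the abstract reduction but making sure the functional-analytic setup is consistent: $\mathcal G$ is only analytic on $C^{2,\alpha}$-neighborhoods (one needs $C^{2,\alpha}$ control of $\partial\Omega_\zeta$ for the eigenvalue $\lambda(\zeta)$ to depend analytically on $\zeta$, cf. \cite{micheletti,nagy}), so one must check that the implicit function theorem is being applied between the right Banach spaces — with $\Upsilon$ landing in $C^{2,\alpha}$ while $U$ is a $C^{1,\alpha}$-neighborhood — and that $P_{K^\perp}$, $P_K$ (a priori $L^2$-orthogonal projections) restrict to bounded operators in the Hölder topology, which holds because $K$ is finite-dimensional and spanned by smooth functions. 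Verifying that $\partial_w\Phi(0,0,0)=\delta^2\mathcal F(0)|_{K^\perp}$ is an \emph{isomorphism} of $C^{2,\alpha}(\de\Omega)\cap K^\perp$ onto the appropriate Hölder space (not merely injective on $H^{1/2}$) uses the elliptic regularity for the Dirichlet-to-Neumann-type operator $T+H_{\partial\Omega}$ established in Subsection \ref{sub:variations_in_zero}.
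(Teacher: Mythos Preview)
Your proposal is correct and is the standard Lyapunov--Schmidt reduction; the paper's argument is essentially the same, with only a cosmetic difference in packaging. Instead of applying the implicit function theorem directly to $\Phi(\xi,s,w)=P_{K^\perp}\delta\mathcal G(\xi+w,s)$ as you do, the paper applies the \emph{inverse} function theorem to the augmented map $\mathcal N(\zeta,s):=P_{K^\perp}\delta\mathcal G(\zeta,s)+P_K(\zeta,s)$ (from $C^{2,\alpha}\oplus\R$ to $C^{1,\alpha}\oplus\R$) and sets $\Upsilon:=P_{K^\perp}\circ\mathcal N^{-1}$; since $\delta\mathcal N(0,0)=(\delta^2\mathcal F(0)+P_K,1)$ is an isomorphism precisely because your $\partial_w\Phi(0,0,0)=\delta^2\mathcal F(0)|_{K^\perp}$ is, the two routes are interchangeable. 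Your identification of the main obstacle---that the invertibility must hold between H\"older spaces and is furnished by the elliptic regularity for $T+H_{\partial\Omega}$ from Subsection~\ref{sub:variations_in_zero}---matches exactly where the paper invokes it, and your observation about the sign in the second line of \eqref{e:LS2} is also correct (the paper's own chain-rule computation yields $\nabla G=P_K\delta\mathcal G$, so the minus sign is a convention).
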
 
	
	\begin{proof}
	
	Consider the operator 
	$$
	\mathcal N(\zeta,s):= P_{K^\perp}(\delta\mathcal G(\zeta,s)) +P_{K}(\zeta,s): L^2(\de \Omega)\oplus \mathbb R \rightarrow L^2(\de \Omega)\oplus \mathbb R
	$$
	and notice that $\mathcal N(0,0)=0$, since $\delta\mathcal G(0,0)=0$. Moreover,
	$$
	\delta \mathcal N(0,0)[(\zeta,s)]=\frac{d}{dt}\Big|_{t=0}\mathcal N(t\zeta,ts)=\big(\delta^2\mathcal F(0)[\zeta,-],0\big)+P_K(\zeta, s)\,,
	$$
	where we used that $P_K$ and $\delta^2\mathcal F(0)$ are linear (and that $\delta^2\mathcal F(0)[\zeta, P_{K^\perp}\zeta] = \delta^2 \mathcal F(0)[\zeta, \zeta]$). In particular $\delta \mathcal N(0,0)$ has trivial kernel on $C^{2,\alpha}(\de \Omega)\oplus \mathbb R$ by construction. {By standard elliptic theory and Schauder estimates}, we conclude that the operator $\delta \mathcal N(0,0)=(\delta^2\mathcal F(0)+P_K, 1)=(T+H_{\de \Omega}+P_K, 1)$ is an isomorphism of $C^{2,\alpha}\oplus \R$ to $C^{1,\alpha}\oplus \R$, for every $\alpha \in (0,1)$, and therefore we can apply the inverse function theorem to the $C^{2,\alpha}$ operator $\mathcal N\colon C^{2,\alpha}\oplus \R\to C^{1,\alpha}\oplus \R$, producing $\Psi:=\mathcal N^{-1}$ which is a bijection from a neighborhood $W$ of $0$ in $C^{1,\alpha}\oplus \R$ to a neighborhood $U$ of $0$ in $C^{2,\alpha}\oplus R$. 
	
	Now the map we are looking for is simply given by $\Upsilon:=P_{K^\perp}\circ \Psi\colon K\oplus \R \to K^\perp\oplus \{0\}$. Indeed notice that the first conclusion of \eqref{e:LS1} is obvious since $\Psi(0,0)=\Psi(\mathcal N(0,0))=(0,0)$, while the second one follows from the more general observation that for every $\zeta \in K, s\in \R$ we have
	\begin{equation}\label{e:LS3}
	\delta\Upsilon (\zeta,s)[(\eta,r)]=\delta (P_{K^\perp}\Psi(\zeta,s))[(\eta,r)]=P_{K^\perp}(\delta\Psi(\zeta,s))[\eta,r]=0,\; \mbox{for every }\eta\in K, r\in \R\,,
	\end{equation}
	by the linearity of $P_{K^\perp}$.
	
	For what concerns \eqref{e:LS2} for every $(\zeta,s) \in C^{2,\alpha}\oplus \R$ we have
	\begin{align*}
	P_K(\zeta,r)+P_{K^\perp}(\zeta,r)
	&=(\zeta,r)=\mathcal N(\Psi(\zeta,r))=P_{K^\perp}\delta\mathcal G(\Psi(\zeta,r))+P_K(\Psi(\zeta,r))
	\end{align*}
	which implies, by applying $P_K$ and $P_{K^\perp}$ respectively on both sides, that
	$$
	P_K(\zeta,r)=P_K(\Psi(\zeta,r))
	\qquad \mbox{and} \qquad 
	P_{K^\perp}(\zeta,r)=P_{K^\perp}\delta\mathcal G(\Psi(\zeta,r))\,.
	$$
	In particular, using the first identity in the second one we get
	$$
	P_{K^\perp}(\zeta,r)=P_{K^\perp}\delta\mathcal G(P_K\Psi(\zeta,r)+P_{K^\perp}\Psi(\zeta,r))=P_{K^\perp}\delta\mathcal G(P_K\zeta+\Upsilon(\zeta,r),r)\,,
	$$
	so that, if $\zeta \in K\cap U, r \in \R$, we conclude
	$$
	P_{K^\perp}\delta\mathcal G(\zeta+\Upsilon(\zeta,r),r)=0\,.
	$$
	The second conclusion of \eqref{e:LS2} follows by differentiating the function $G(\zeta,r)$ as follows. Let $\eta \in K$, then we have
	$$
	\begin{aligned}
	\left\langle \nabla G(\zeta,r), (\eta,s)\right\rangle&=\delta \mathcal G(\zeta+\Upsilon(\zeta,r),r)[\eta+\delta\Upsilon(\zeta,r)[\eta,s],s]\\ 
	&\!\!\!\stackrel{\eqref{e:LS3}}{=}\delta \mathcal G(\zeta+\Upsilon(\zeta,r),r)[\eta,s]=P_K(\delta \mathcal G(\zeta+\Upsilon(\zeta,r),r))[\eta,s]\,,
	\end{aligned}$$
	where the last equality follows from the first one in \eqref{e:LS2}.
	\end{proof}

\bibliographystyle{plain}
\bibliography{references-Cal}

\end{document}